\newtheorem{Theorem}{Theorem}[section]
\newtheorem{Lemma}[Theorem]{Lemma}
\newtheorem{Proposition}[Theorem]{Proposition}
\newtheorem{Corollary}[Theorem]{Corollary}
\theoremstyle{remark}
\newtheorem{Remark}[Theorem]{Remark}
\newtheorem{Question}[Theorem]{Question}
\subjclass[2010]{26A16; 46A16; 46B15; 46B20; 46B80; 46B85}
\keywords{Quasimetric space, quasi-Banach space, Lipschitz free $p$-space, embedding of $\ell_{p}$, complemented subspace, Schauder basis}
\begin{document}

\title[Embeddability of $\ell_p$ and bases in Lipschitz free $p$-spaces]{Embeddability of $\ell_p$ and bases \\ in Lipschitz free $p$-spaces for $0<p\le 1$}

\author[F. Albiac]{Fernando Albiac}
\address{Mathematics Department--InaMat \\
Universidad P\'ublica de Navarra\\
Campus de Arrosad\'{i}a\\
Pamplona\\
31006 Spain}
\email{fernando.albiac@unavarra.es}

\author[J. L. Ansorena]{Jos\'e L. Ansorena}
\address{Department of Mathematics and Computer Sciences\\
Universidad de La Rioja\\
Logro\~no\\
26004 Spain}
\email{joseluis.ansorena@unirioja.es}

\author[M. C\'uth]{Marek C\'uth}
\address{Faculty of Mathematics and Physics, Department of Mathematical Analysis\\
Charles University\\
186 75 Praha 8\\
Czech Republic}
\email{cuth@karlin.mff.cuni.cz}

\author[M. Doucha]{Michal Doucha}
\address{Institute of Mathematics\\
Czech Academy of Sciences\\
\v Zitn\'a 25\\
115 67 Praha 1\\
Czech Republic}
\email{doucha@math.cas.cz}

\begin{abstract}
Our goal in this paper is to continue the study initiated by the authors
in \cite{AACD2018} of the geometry of the Lipschitz free $p$-spaces
over quasimetric spaces for $0<p\le 1$, denoted $\mathcal{F}_{p}(
\mathcal{M})$. Here we develop new techniques to show that, by analogy
with the case $p=1$, the space $\ell _{p}$ embeds isomorphically in
$\mathcal{F}_{p}(\mathcal{M})$ for $0<p<1$. Going further we see that
despite the fact that, unlike the case $p=1$, this embedding need not
be complemented in general, complementability of $\ell _{p}$ in a
Lipschitz free $p$-space can still be attained by imposing certain
natural restrictions to $\mathcal{M}$. As a by-product of our discussion
on bases in $\mathcal{F}_{p}([0,1])$,
we obtain examples of $p$-Banach spaces for $p < 1$ that are not based
on a trivial modification of Banach spaces, which possess a basis but fail to have an unconditional basis.
\end{abstract}

\thanks{F. Albiac acknowledges the support of the Spanish Ministry for Economy and Competitivity under Grant MTM2016-76808-P for
\emph{Operators, lattices, and structure of Banach spaces}, and the support of the Spanish Ministry for Science, Innovation, and Universities under Grant  PGC2018-095366-B-I00 for \emph{An\'{a}lisis Vectorial, Multilineal y Aproximaci\'{o}n}. He would also like to thank the Isaac Newton Institute for Mathematical Sciences, Cambridge (United Kingdom), for
support and hospitality during the program \emph{Approximation, Sampling and Compression in Data Science}, where work on this paper was undertaken. This work was supported by EPSRC grant number EP/R014604/1. J.~L. Ansorena acknowledges the support of the Spanish Ministry for Science, Innovation, and Universities under Grant PGC2018-095366-B-I00 for \emph{An\'{a}lisis Vectorial, Multilineal y Aproximaci\'{o}n}. M. C\'{u}th has been supported by Charles University Research program No. UNCE/SCI/023 and by the Research grant GA\v{C}R 17-04197Y. M. Doucha was supported by the GA\v{C}R project 19-05271Y and RVO: 67985840.}

\maketitle

\section{Introduction}\label{sec1}
\noindent

Given a pointed $p$-metric space $\mathcal{M}$ ($0<p\le 1$) it is
possible to construct a unique $p$-Banach space $\mathcal{F}_{p}(
\mathcal{M})$ in such a way that $\mathcal{M}$ embeds isometrically in
$\mathcal{F}_{p}(\mathcal{M})$, and for every $p$-Banach space $X$ and
every Lipschitz map $f\colon \mathcal{M}\to X$ that maps the base point
$0$ in $\mathcal{M}$ to $0\in X$ there exists a unique linear map
$T_{f}\colon \mathcal{F}_{p}(\mathcal{M})\to X$ with $\Vert T_{f}
\Vert =\mathrm{Lip}(f)$. The space $\mathcal{F}_{p}(\mathcal{M})$ is
known as the \emph{Lipschitz free $p$-space} (or the \emph{Arens-Eells
$p$-space\emph{)} over $\mathcal{M}$}. Lipschitz free $p$-spaces provide a
canonical linearization process of Lipschitz maps between $p$-metric
spaces: if we identify (through the isometric embedding $
\delta _{\mathcal{M}}\colon \mathcal{M}\to \mathcal{F}_{p}(\mathcal{M})$)
a $p$-metric space $\mathcal{M}$ with a subset of $\mathcal{F}_{p}(
\mathcal{M})$, then any Lipschitz map $f$ from a $p$-metric space
$\mathcal{M}_{1}$ to a $p$-metric space $\mathcal{M}_{2}$ which maps the
base point in $\mathcal{M}_{1}$ to the base point in $\mathcal{M}_{2}$
extends to a continuous linear map $L_{f}\colon \mathcal{F}_{p}(
\mathcal{M}_{1})\to \mathcal{F}_{p}(\mathcal{M}_{2})$ and $\Vert L
_{f}\Vert =\mathrm{Lip}(f)$.

Lipschitz free $p$-spaces were introduced in
\cite{AlbiacKalton2009}, where they were used to provide for every for
$0<p<1$ a couple of \emph{separable} $p$-Banach spaces which are
Lipschitz-isomorphic without being linearly isomorphic. The study of the
structure of the spaces $\mathcal{F}_{p}(\mathcal{M})$, however, has not
been tackled until recently in \cite{AACD2018}, where we refer the
reader for terminology and background. These spaces constitute a nice
family of new $p$-Banach spaces which are easy to define but whose
geometry seems to be difficult to understand. To carry out this task
successfully one hopes to be able to count on ``natural'' structural
results involving free $p$-spaces over subsets of $\mathcal{M}$. In
\cite{AACD2018}*{\S 6} we analyzed this premise and confirmed an
unfortunate recurrent pattern in quasi-Banach spaces: the lack of tools
can be an important stumbling block in the development of the theory.
However, as we see here, not everything is lost and we still can develop
specific methods that permit to shed light onto the geometry of
$\mathcal{F}_{p}(\mathcal{M})$.

This paper is a continuation of the study initiated by the authors in
\cite{AACD2018}. Our aim is to delve deeper into the structure of this
class of spaces and address very natural questions that arise by analogy
with the case $p=1$. Needless to say, the extension of such results is
far from straightforward since the techniques used for metric and Banach
spaces break down when the local convexity is lifted. As a consequence,
our work provides a new view (and often also rather different proofs)
of the structural results already known for the standard Lipschitz free
spaces.

To that end, we start in Section~\ref{Sect2.1} with a method, which is an
extension of known results for $p=1$ to the more general case of
$p\in (0,1]$, for constructing $p$-metric spaces by taking certain sums.
The other two subsections of Section~\ref{Section:Comp} contain results that
are new even for the case when $p=1$. In Section~\ref{Sect2.2} we address
metric quotients. The main application here is probably that some
Sobolev spaces are isometric to certain Lipschitz free spaces (see
Theorem~\ref{thm:sobolev}). In Section~\ref{Sect2.3} we describe the kernel
of a projection induced by a Lipchitz retraction. This was already
considered in \cite{HN17}*{Proposition 1} for $p=1$, but we obtain a
different description using metric quotients (see
Theorem~\ref{thm:retr}).

It is known that $\ell _{1}$ is isomorphic to a complemented subspace of
$\mathcal{F}(\mathcal{M})$ whenever $\mathcal{M}$ is an infinite metric
space (see \cite{CDW2016} and \cite{HN17}). This important
structural property does not carry over, in general, to free $p$-spaces
over quasimetric spaces when $p<1$. Indeed, $\ell _{p}$ (whose dual is
$\ell _{\infty }$) fails to be complemented in $L_{p}[0,1]$ (which is a
Lipschitz free $p$-space by \cite{AACD2018}*{Theorem 4.13}) since
$L_{p}[0,1]^{\ast }=\{0\}$. However, as we will see in Section~\ref{Section:Comp}, 
there are conditions on $\mathcal{M}$ which
ensure that $\ell _{p}$ does embed complementably into $\mathcal{F}
_{p}(\mathcal{M})$ for every $0<p\le 1$. The most important case occurs
when $\mathcal{M}$ is  an infinite
metric space.  We also extend the results from \cite{HN17}
concerning Lipschitz free $p$-spaces over different nets of a $p$-metric
space (see Proposition~\ref{prop:nets}).

The question we tackle in Section~\ref{Sect3} is whether, by analogy
with the case $p=1$, we can guarantee that $\ell _{p}$ embeds
isomorphically in any $\mathcal{F}_{p}(\mathcal{M})$ for $0<p<1$. The
answer is positive, but in order to prove it we must develop a
completely new set of techniques, specific of the nonlocally convex case
(which, by the way, also work for $p=1$). Some of our results in this section such as 
Theorem~\ref{thm:main2} are new  even for the case
$p=1$.

In the last, and partially independent Section~\ref{Sect:bases}, we
investigate Schauder bases in $\mathcal{F}_{p}(\mathbb{N})$ and
$\mathcal{F}_{p}([0,1])$ when $0<p<1$. One of the main results here is that
$\mathcal{F}_{p}([0,1])$ has a Schauder basis.
These provide examples of $p$-Banach spaces for $p<1$ with a basis which do not have an unconditional basis and are not a trivial modification/deformation of a Banach space such as $L_{1}\oplus \ell _{p}$, thus reinforcing the theoretical usefulness of Lipschitz free $p$-spaces for $p < 1$.

Throughout this article we use standard notation in Banach space theory
as can be found in \cite{AlbiacKalton2016}. We refer the reader to
\cites{GodefroyKalton2003,Weaver2018} for basic facts on Lipschitz free
spaces and some of their uses, and to \cite{KPR1984} for background
on quasi-Banach spaces.

Let us start with an analogue to \cite{HN17}*{Proposition 2} for
$p$-metric spaces.

\begin{Lemma}
\label{lem:isoWithELLPSum}Let $(\mathcal{M},d,0)$ be a pointed $p$-metric space, $p\in (0,1]$.
Suppose that $(\mathcal{M}_{\alpha })_{\alpha \in \Delta }$ is a family
of subsets of $\mathcal{M}$ such that $0\in \mathcal{M}_{\alpha }$ for
every $\alpha \in \Delta $ and 
$(\mathcal{M}_{\alpha }\setminus \{0\})_{\alpha \in \Delta }$ is a partition $\mathcal{M}\setminus \{0\}$.
Suppose further that there exists $K\ge 1$ such that for all
$x\in \mathcal{M}_{\alpha }$ and all $y\in \mathcal{M}_{\beta }$ with
$\alpha ,\beta \in \Delta $, $\alpha \neq \beta $, we have
\begin{equation}
\label{eq:almostEllPSum}
K^{p} d^{p}(x,y)\geq d^{p}(x,0) + d^{p}(y,0).
\end{equation}
Then the map
\begin{equation*}
T\colon \left (\bigoplus _{\alpha \in \Delta } \mathcal{F}_{p}(
\mathcal{M}_{\alpha })\right )_{p} \to \mathcal{F}_{p}(\mathcal{M}),
\quad (x_{\alpha })_{\alpha \in \Delta } \mapsto \sum _{\alpha \in
\Delta } L_{\alpha }(x_{\alpha }),
\end{equation*}
where $L_{\alpha }$ denotes the canonical map from $\mathcal{F}_{p}(
\mathcal{M}_{\alpha })$ into $\mathcal{F}_{p}(\mathcal{M})$, is an onto
isomorphism. Quantitatively, $\Vert T\Vert \le 1$ and $\Vert T^{-1}
\Vert \le K$.
\end{Lemma}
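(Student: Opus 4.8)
The plan is to prove $\|T\|\le 1$ by a direct estimate and then to build an explicit inverse $S$ with $\|S\|\le K$ by invoking the universal property of the free $p$-space, after which the two composition identities follow by checking them on the canonical generators.

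For the norm of $T$, each $L_\alpha$ is the linearization of the isometric inclusion $\mathcal{M}_\alpha\hookrightarrow\mathcal{M}$, so $\|L_\alpha\|\le 1$; the $p$-subadditivity of the quasinorm then gives, for $(x_\alpha)_\alpha\in\bigl(\bigoplus_\alpha\mathcal{F}_p(\mathcal{M}_\alpha)\bigr)_p$,
\[
\Bigl\|\sum_{\alpha\in\Delta}L_\alpha(x_\alpha)\Bigr\|^p\le\sum_{\alpha\in\Delta}\|L_\alpha(x_\alpha)\|^p\le\sum_{\alpha\in\Delta}\|x_\alpha\|^p,
\]
which simultaneously shows that the defining series is Cauchy, hence convergent in the complete space $\mathcal{F}_p(\mathcal{M})$, so that $T$ is well defined with $\|T\|\le 1$.

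The core of the argument is the construction of the inverse. Because $(\mathcal{M}_\alpha\setminus\{0\})_\alpha$ partitions $\mathcal{M}\setminus\{0\}$, every $x\in\mathcal{M}\setminus\{0\}$ lies in a unique part, and I would let $r_\alpha\colon\mathcal{M}\to\mathcal{M}_\alpha$ be the retraction equal to the identity on $\mathcal{M}_\alpha$ and sending the rest of $\mathcal{M}$ to $0$. Assembling these into
\[
\phi\colon\mathcal{M}\to\Bigl(\bigoplus_{\alpha\in\Delta}\mathcal{F}_p(\mathcal{M}_\alpha)\Bigr)_p,\qquad \phi(x)=\bigl(\delta_{\mathcal{M}_\alpha}(r_\alpha(x))\bigr)_{\alpha\in\Delta},
\]
note that $\phi(x)$ has at most one nonzero coordinate and $\phi(0)=0$. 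The crucial verification is that $\phi$ is $K$-Lipschitz: when $x,y$ lie in a common part the difference is concentrated in a single coordinate, whence $\|\phi(x)-\phi(y)\|=d(x,y)$; when $x\in\mathcal{M}_\alpha$ and $y\in\mathcal{M}_\beta$ with $\alpha\neq\beta$, the two contributions occupy different coordinates, so $\|\phi(x)-\phi(y)\|^p=d^p(x,0)+d^p(y,0)$, and this is exactly the quantity that \eqref{eq:almostEllPSum} bounds by $K^pd^p(x,y)$. Since the $\ell_p$-sum of $p$-Banach spaces is again a $p$-Banach space, the universal property linearizes $\phi$ to an operator $S:=L_\phi$ with $\|S\|=\mathrm{Lip}(\phi)\le K$; by construction its $\alpha$-th coordinate is $R_\alpha:=L_{r_\alpha}$, that is, $S(\mu)=(R_\alpha\mu)_\alpha$.

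It then remains to confirm that $S=T^{-1}$. For $S\circ T$ I would use that the composition $r_\beta\circ\iota_\alpha$ of underlying Lipschitz maps ($\iota_\alpha$ the inclusion of $\mathcal{M}_\alpha$) is the identity of $\mathcal{M}_\alpha$ if $\alpha=\beta$ and the constant map $0$ if $\alpha\neq\beta$, again by the partition property; since linearization respects composition this gives $R_\beta L_\alpha=\delta_{\alpha\beta}\,\mathrm{id}$, and, interchanging the continuous $R_\beta$ with the convergent series, the $\beta$-coordinate of $S(T((x_\alpha)_\alpha))$ is $x_\beta$, so $S\circ T=\mathrm{id}$. For $T\circ S$ it suffices to evaluate on a generator $\delta_{\mathcal{M}}(x)$: here $S(\delta_{\mathcal{M}}(x))=\phi(x)$ is supported in the single coordinate $\alpha$ with $x\in\mathcal{M}_\alpha$, and $T$ returns $L_\alpha(\delta_{\mathcal{M}_\alpha}(x))=\delta_{\mathcal{M}}(x)$; since such generators span a dense subspace and $T\circ S$ is bounded, $T\circ S=\mathrm{id}$. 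Thus $T$ is an onto isomorphism with $T^{-1}=S$ and $\|T^{-1}\|\le K$. The only genuinely nonformal point is the cross-term Lipschitz estimate for $\phi$, which is precisely where \eqref{eq:almostEllPSum} enters; the convergence, the operator identities, and the density arguments are routine once $S$ and $T$ are known to be bounded.
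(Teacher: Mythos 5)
Your proposal is correct, and it takes a genuinely different route from the paper. The paper never constructs an inverse operator: it works with $p$-norming sets, invoking \cite{AACD2018}*{Corollary 4.11} to see that the within-part normalized molecules form an isometric $p$-norming set for the $\ell_p$-sum, and then reduces everything (via \cite{AACD2018}*{Lemmas 2.6 and 2.7}) to checking that each cross-part molecule $\frac{\delta_{\mathcal{M}}(y)-\delta_{\mathcal{M}}(x)}{d(x,y)}$ lies in $K\operatorname{co}_{p}(Z)$, which is done by writing it as $\lambda \frac{\delta_{\mathcal{M}}(y)}{d(y,0)}+\mu \frac{\delta_{\mathcal{M}}(x)}{d(x,0)}$ with $|\lambda|^{p}+|\mu|^{p}\le K^{p}$ by \eqref{eq:almostEllPSum}. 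You instead build $T^{-1}$ explicitly: the collapsing retractions $r_\alpha$ assemble into the map $\phi$, whose cross-part Lipschitz estimate $\|\phi(x)-\phi(y)\|^{p}=d^{p}(x,0)+d^{p}(y,0)\le K^{p}d^{p}(x,y)$ is exactly the same inequality in dual guise, and the universal property linearizes $\phi$ to $S$ with $\|S\|\le K$; the identities $S\circ T=\mathrm{id}$ and $T\circ S=\mathrm{id}$ then follow from functoriality of linearization and density of the Dirac generators, as you say. Both routes produce the same constants $\|T\|\le 1$, $\|T^{-1}\|\le K$. What the paper's argument buys is brevity, given that the norming-set machinery of \cite{AACD2018} is already available; what yours buys is self-containedness (only the universal property is used) plus an explicit formula for the inverse, namely $T^{-1}\mu=(L_{r_\alpha}\mu)_{\alpha\in\Delta}$, which the paper's abstract comparison of norming sets never exhibits. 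One small point worth making explicit in your write-up: the identification of the $\alpha$-th coordinate of $S$ with $L_{r_\alpha}$ presupposes that each $r_\alpha$ is Lipschitz; this is true (it is $K$-Lipschitz, by the same hypothesis \eqref{eq:almostEllPSum}), and alternatively it follows automatically since $\pi_\alpha\circ S$ is a bounded operator satisfying $\pi_\alpha\circ S\circ\delta_{\mathcal{M}}=\delta_{\mathcal{M}_\alpha}\circ r_\alpha$, so uniqueness of linearization applies; either way there is no gap.
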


In order to prove Lemma~\ref{lem:isoWithELLPSum} we will use the notion
of $p$-norming set. We say that a subset $Z$ of a $p$-Banach
space $X$ is \emph{$p$-norming with constants $c_{1}$ and $c_{2}$} if
$c_{1}^{-1} Z$ is contained in the unit closed ball $B_{X}$ of $X$ and
$c_{2}^{-1} B_{X}$ is contained in the smallest absolutely $p$-convex
closed set containing $Z$, denoted by
$\operatorname{\mathrm{co}}_{p}(Z)$ (see \cite{AACD2018}*{\S 2.3}).

\begin{proof}[Proof of Lemma~\ref{lem:isoWithELLPSum}]
Let $\delta _{\alpha }$ be the canonical embedding of $\mathcal{M}_{
\alpha }$ into $(\bigoplus _{\alpha \in \Delta } \mathcal{F}_{p}(
\mathcal{M}_{\alpha }))_{p}$. By \cite{AACD2018}*{Corollary 4.11}, the
set
\begin{equation*}
\bigcup _{\alpha \in \Delta } \left \{  \frac{\delta _{\alpha }(y)-
\delta _{\alpha }(x)}{d(x,y)} \colon x,y \in \mathcal{M}_{\alpha },\, x
\neq y\right \}
\end{equation*}
is an isometric $p$-norming set for $(\bigoplus _{\alpha \in \Delta }
\mathcal{F}_{p}(\mathcal{M}_{\alpha }))_{p}$. Hence, by
\cite{AACD2018}*{Lemmas 2.6 and 2.7}, we must prove that
\begin{equation*}
Z:=\bigcup _{\alpha \in \Delta } \left \{  \frac{\delta _{\mathcal{M}}(y)-
\delta _{\mathcal{M}}(x)}{d(x,y)} \colon x,y \in \mathcal{M}_{\alpha },
x\neq y\right \}
\end{equation*}
is a $p$-norming set for $\mathcal{F}_{p}(\mathcal{M})$ with constants
$1$ and $K$. To that end, invoking again \cite{AACD2018}*{Corollary~4.11}, it suffices to prove that
\begin{equation*}
Z_{1}:=\left \{
\frac{\delta _{\mathcal{M}}(y)-\delta _{\mathcal{M}}(x)}{d(x,y)} \colon
x\in \mathcal{M}_{\alpha }, \, y\in \mathcal{M}_{\beta }, \, \alpha
\neq \beta \right \}  \subseteq K\operatorname{\mathrm{co}}_{p}(Z).
\end{equation*}
Let $x,y\in \mathcal{M}$ and pick $\alpha $ and $\beta $ such that
$x\in \mathcal{M}_{\alpha }$ and $y\in \mathcal{M}_{\beta }$. We have
\begin{equation*}
\frac{\delta _{\mathcal{M}}(y)-\delta _{\mathcal{M}}(x)}{d(x,y)}=\lambda
\frac{\delta _{\mathcal{M}}(y)}{d(y,0)}+ \mu
\frac{\delta _{\mathcal{M}}(x)}{d(x,0)},
\end{equation*}
where
\begin{equation*}
\lambda =\frac{d(y,0)}{d(x,y)} \text{ and } \mu =-
\frac{d(x,0)}{d(x,y)}.
\end{equation*}
Since $|\lambda |^{p}+|\mu |^{p}\le K^{p}$, we are done.
\end{proof}

\subsection{Sums of quasimetric spaces}\label{Sect2.1}
We now introduce a method inspired by Lemma~\ref{lem:isoWithELLPSum} for
building quasimetric spaces. Given a family $(M_{\alpha },d_{\alpha },0)_{\alpha
\in \Delta }$ of pointed $p$-metric spaces we consider
\begin{equation*}
\left (\bigoplus _{\alpha \in \Delta } \mathcal{M}_{\alpha }\right )
_{p}
=\Big \{(x_{\alpha })_{\alpha \in \Delta }\in
\Pi _{\alpha \in \Delta } \mathcal{M}_{\alpha }\colon (d_{\alpha }(x
_{\alpha },0))_{\alpha \in \Delta }\in \ell _{p}(\Delta )\Big \}.
\end{equation*}
Since $(d_{\alpha }(x^{1}_{\alpha },x^{2}_{\alpha }))_{\alpha \in
\Delta }\in \ell _{p}(\Delta )$ whenever $(x^{i}_{\alpha })_{\alpha
\in \Delta }\in (\bigoplus _{\alpha \in \Delta }
\mathcal{M}_{\alpha })_{p}$, $i=1$, $2$, we can safely define a
$p$-distance $d$ on $(\bigoplus _{n=1}^{\infty }\mathcal{M}_{n})_{p}$ by
\begin{equation*}
d((x^{1}_{\alpha })_{\alpha \in \Delta }, (x^{2}_{\alpha })_{\alpha
\in \Delta })
=\left (\sum _{\alpha \in \Delta } d_{\alpha }^{\, p}(x
^{1}_{\alpha },x^{2}_{\alpha })\right )^{1/p}.
\end{equation*}
The base point of $\big ((\bigoplus _{\alpha \in \Delta } \mathcal{M}
_{\alpha })_{p},d\big )$ will be the element whose entries are the base
points of each $p$-metric space $\mathcal{M}_{\alpha }$. We consider the
subset
\begin{equation*}
\maltese _{\alpha \in \Delta } \mathcal{M}_{\alpha }=\left \{  (x_{
\alpha })_{\alpha \in \Delta } \in \Pi _{\alpha \in \Delta }
\mathcal{M}_{\alpha }\colon |\{ \alpha \colon x_{\alpha }\neq0\}|
\le 1\right \}
\end{equation*}
of the pointed $p$-metric space $\big ((\bigoplus _{\alpha \in \Delta }
\mathcal{M}_{\alpha })_{p},d,0\big )$. If $\mathcal{M}_{\alpha }=
\mathcal{M}$ for every $\alpha \in \Delta $ we put $\left (\bigoplus _{\alpha
\in \Delta } \mathcal{M}_{\alpha }\right )_{p}=\ell _{p}(\mathcal{M},
\Delta )$ and $\maltese _{\alpha \in \Delta } M_{\alpha }=\maltese (
\mathcal{M},\Delta )$ (respectively, $\ell _{p}(\mathcal{M})$ and
$\maltese (\mathcal{M})$ in the case when $\Delta =\mathbb{N}$). If
$\Delta $ is finite (for instance $\Delta =\{a,b\}$) we write
$\left (\bigoplus _{\alpha \in \Delta } \mathcal{M}_{\alpha }\right )
_{p}=\mathcal{M}_{a}\oplus \mathcal{M}_{b}$ and $
\maltese _{\alpha \in \Delta } M_{\alpha }=\mathcal{M}_{a}\maltese
\mathcal{M}_{b}$.

The spaces $\maltese _{\alpha \in \Delta } M_{\alpha }$ were considered
in \cite{DuFe} in the metric space setting, although its authors
preferred to use the (equivalent) supremum norm instead of the
$\ell _{1}$-norm to combine the spaces. Notice that the canonical
embedding $I_{\alpha }$ of $\mathcal{M}_{\alpha }$ into $
\maltese _{\alpha \in \Delta } \mathcal{M}_{\alpha }$ is an isometry,
that $(I_{\alpha }(\mathcal{M}_{\alpha }\setminus \{0\}))_{\alpha
\in \Delta }$ is a partition of $\maltese _{\alpha \in \Delta } M_{
\alpha }\setminus \{0\}$ and that, if $\alpha \neq\beta $ and $x$,
$y\in \maltese _{\alpha \in \Delta } \mathcal{M}_{\alpha }$,
\begin{equation*}
d(I_{\alpha }(x),I_{\beta }(y))
=\left (d^{p}(I_{\alpha }(x),0) + d
^{p}(I_{\beta }(y),0)\right )^{1/p}.
\end{equation*}
Hence, Lemma~\ref{lem:isoWithELLPSum} immediately gives the following.
\begin{Lemma}
\label{lem:2}
Let $(\mathcal{M}_{\alpha },d_{\alpha },0)_{\alpha \in \Delta }$ be a
family of pointed $p$-metric spaces, $p\in (0,1]$. Then
\begin{equation*}
\mathcal{F}_{p}(\maltese _{\alpha \in \Delta } \mathcal{M}_{\alpha })
\simeq \left (\bigoplus _{\alpha \Delta } \mathcal{F}_{p}(\mathcal{M}
_{\alpha })\right )_{p}.
\end{equation*}
To be precise, the canonical map $L_{\maltese }$ given by
\begin{equation*}
(x_{\alpha })_{\alpha \in \Delta } \mapsto \sum _{\alpha \in \Delta } L
_{I_{\alpha }}(x_{\alpha })
\end{equation*}
is an isometry.
\end{Lemma}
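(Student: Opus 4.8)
The plan is to obtain the result as an immediate instance of Lemma~\ref{lem:isoWithELLPSum}, applied to the pointed $p$-metric space $\mathcal{M}=\maltese_{\alpha\in\Delta}\mathcal{M}_{\alpha}$ equipped with the family of subsets $(I_{\alpha}(\mathcal{M}_{\alpha}))_{\alpha\in\Delta}$. First I would verify the hypotheses of that lemma one by one. Two of them were already recorded in the discussion preceding the statement: each $I_{\alpha}$ is an isometric embedding sending the base point to the base point, so $0\in I_{\alpha}(\mathcal{M}_{\alpha})$ for every $\alpha$; and $(I_{\alpha}(\mathcal{M}_{\alpha}\setminus\{0\}))_{\alpha\in\Delta}$ is a partition of $(\maltese_{\alpha\in\Delta}\mathcal{M}_{\alpha})\setminus\{0\}$.

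The crucial third hypothesis, the quasi-orthogonality estimate~\eqref{eq:almostEllPSum}, is exactly what the $\maltese$-construction is built to deliver. Indeed, for $\alpha\neq\beta$, $x\in\mathcal{M}_{\alpha}$ and $y\in\mathcal{M}_{\beta}$ the identity
\[
d(I_{\alpha}(x),I_{\beta}(y))=\left(d^{p}(I_{\alpha}(x),0)+d^{p}(I_{\beta}(y),0)\right)^{1/p}
\]
recorded above is precisely~\eqref{eq:almostEllPSum} holding with equality, that is, with the optimal constant $K=1$. Thus all three hypotheses are met with $K=1$.

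Applying Lemma~\ref{lem:isoWithELLPSum} then produces an onto isomorphism $T$ with $\Vert T\Vert\le 1$ and $\Vert T^{-1}\Vert\le K=1$; since both bounds equal $1$, the map $T$ preserves the $p$-norm and is therefore an isometry. It remains only to identify $T$ with the map $L_{\maltese}$ of the statement. This is a matter of bookkeeping: passing to Lipschitz free $p$-spaces turns the isometry $I_{\alpha}$ into an identification of $\mathcal{F}_{p}(\mathcal{M}_{\alpha})$ with $\mathcal{F}_{p}(I_{\alpha}(\mathcal{M}_{\alpha}))$ under which the canonical inclusion map $L_{\alpha}$ of Lemma~\ref{lem:isoWithELLPSum} becomes $L_{I_{\alpha}}$, so that the two defining formulas agree term by term.

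I do not expect a genuine obstacle here, as the statement is designed to be a direct corollary; the only delicate point is the final identification of $T$ with $L_{\maltese}$, which hinges on the functoriality of $\mathcal{F}_{p}(\cdot)$ applied to the isometries $I_{\alpha}$.
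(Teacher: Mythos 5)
Your proposal is correct and coincides with the paper's own argument: the paper likewise notes that the $I_{\alpha}$ are isometries, that $(I_{\alpha}(\mathcal{M}_{\alpha}\setminus\{0\}))_{\alpha\in\Delta}$ partitions $\maltese_{\alpha\in\Delta}\mathcal{M}_{\alpha}\setminus\{0\}$, and that the distance identity gives \eqref{eq:almostEllPSum} with equality, whence Lemma~\ref{lem:isoWithELLPSum} applies with $K=1$ and yields the isometry immediately. Your extra remarks --- that $\Vert T\Vert\le 1$ together with $\Vert T^{-1}\Vert\le 1$ forces $T$ to be an isometry, and that functoriality of $\mathcal{F}_{p}(\cdot)$ identifies $T$ with $L_{\maltese}$ --- are exactly the bookkeeping the paper leaves implicit.
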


Let us next present a few applications of Lipschitz free $p$-spaces over
Banach spaces of continuous functions.
\begin{Proposition}
\label{AJlA}
For every $0<p\le 1$ we have $\mathcal{F}_{p}(c_{0})\simeq \ell _{p}(
\mathcal{F}_{p}(c_{0}))$.
\end{Proposition}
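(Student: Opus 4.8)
The plan is to combine Lemma~\ref{lem:2} with the decomposition method of Pe\l czy\'nski. Applying Lemma~\ref{lem:2} with $\mathcal{M}_{\alpha}=c_{0}$ for every $\alpha\in\mathbb{N}$ yields $\mathcal{F}_{p}(\maltese(c_{0}))\simeq\ell_{p}(\mathcal{F}_{p}(c_{0}))$ (in fact isometrically), so it suffices to prove that $\mathcal{F}_{p}(c_{0})\simeq\mathcal{F}_{p}(\maltese(c_{0}))$. Since $\ell_{p}(\mathcal{F}_{p}(c_{0}))$ is trivially isomorphic to the $\ell_{p}$-sum of countably many copies of itself, Pe\l czy\'nski's technique reduces the task to checking that each of $\mathcal{F}_{p}(c_{0})$ and $\mathcal{F}_{p}(\maltese(c_{0}))\simeq\ell_{p}(\mathcal{F}_{p}(c_{0}))$ is isomorphic to a complemented subspace of the other.

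One of these complementations is immediate: $\mathcal{F}_{p}(c_{0})$ is norm-one complemented in $\ell_{p}(\mathcal{F}_{p}(c_{0}))$ as its first coordinate. For the reverse direction I would realize $\maltese(c_{0})$ concretely inside $c_{0}$. Identifying $c_{0}$ with $c_{0}(\mathbb{N}\times\mathbb{N})=(\bigoplus_{n}c_{0})_{c_{0}}$ by partitioning the coordinates into countably many infinite blocks, the assignment sending the point $(n,x)$ of the $n$-th sector of $\maltese(c_{0})$ to the vector whose $n$-th block equals $x$ and whose other blocks vanish is a bi-Lipschitz embedding $\iota$ of $\maltese(c_{0})$ onto the set $S$ of vectors supported on a single block (the $\maltese$-distance and the $c_{0}$-norm agree up to the factor $2^{1/p}$ coming from comparing $\|\cdot\|_{\ell_{p}}$ and $\|\cdot\|_{\infty}$ of two scalars). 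Consequently $\mathcal{F}_{p}(\maltese(c_{0}))$ is isomorphic to the isometric copy $L_{\iota}(\mathcal{F}_{p}(S))$ sitting inside $\mathcal{F}_{p}(c_{0})$, and it remains to produce a bounded projection onto it.

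The heart of the matter, and the step I expect to be the main obstacle, is to construct a Lipschitz retraction $r\colon c_{0}\to S$; then $L_{r}$ is the desired projection and $\Vert L_{r}\Vert\le\mathrm{Lip}(r)$. Such an $r$ cannot be built naively, because assigning to $z=(z^{(n)})_{n}$ its dominant block $z^{(n^{*})}$ is discontinuous wherever two blocks compete for the maximal norm; moreover $\maltese(c_{0})$ is not even Lipschitz isomorphic to $c_{0}$ (the wedge point is a local cut point of infinite order, while no point of $c_{0}$ is), so any valid $r$ must genuinely collapse a large connected region near the ties. The fix is to damp the dominant block by the \emph{gap} between the largest and the second largest block-norms: roughly $r(z)=(1-M_{2}(z)/M_{1}(z))\,z^{(n^{*})}$, where $M_{1}\ge M_{2}$ are the two largest values of $(\Vert z^{(n)}\Vert)_{n}$ and $n^{*}$ is the first index attaining $M_{1}$. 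This fixes $S$ pointwise (there $M_{2}=0$, so $r(z)=z$), and the damping factor vanishes exactly on the tie set where $n^{*}$ jumps, so $r$ is continuous. Since $M_{1}=\Vert z\Vert$ and $M_{2}$ are $1$-Lipschitz, the map $z\mapsto(\Vert z^{(n)}\Vert)_{n}$ is $1$-Lipschitz, and $r$ is positively homogeneous of degree one, the remaining work is the (delicate but elementary) verification that $r$ is globally Lipschitz, which by homogeneity reduces to the unit sphere of $c_{0}$. Granting such an $r$, both complementations are in place and Pe\l czy\'nski's decomposition method, applied to the self-similar space $\ell_{p}(\mathcal{F}_{p}(c_{0}))$, delivers $\mathcal{F}_{p}(c_{0})\simeq\ell_{p}(\mathcal{F}_{p}(c_{0}))$.
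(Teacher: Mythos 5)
Your proposal is correct and coincides in substance with the paper's own proof, which consists of the single instruction to mimic the proof of \cite{DuFe}*{Proposition 4} with the aid of Lemma~\ref{lem:2}: that argument is exactly your scheme of realizing $\maltese(c_{0})$ as the set of single-block vectors in $c_{0}\simeq c_{0}(\mathbb{N}\times\mathbb{N})$, retracting $c_{0}$ onto this set by damping the dominant block through the second-largest block norm (Dutrieux and Ferenczi truncate the coordinates of the dominant block at level $M_{2}$ rather than scaling by $1-M_{2}/M_{1}$, an equivalent device), and concluding with Pe\l czy\'nski's decomposition, with Lemma~\ref{lem:2} identifying $\mathcal{F}_{p}(\maltese(c_{0}))$ with the $\ell_{p}$-sum. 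Your gap-damped retraction does verify as Lipschitz --- at a tie the factor vanishes, the same-block and different-block cases on the unit sphere each give constant at most $2$, and positive homogeneity upgrades this to a global bound --- and the retraction is precisely what makes the canonical map from $\mathcal{F}_{p}(S)$ an isomorphic embedding with complemented range for $p<1$ (it need not be isometric, contrary to your phrasing, but nothing in the argument uses that).
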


\begin{proof}
We just need to mimic the proof of \cite{DuFe}*{Proposition 4} with
the aid of Lemma~\ref{lem:2}.
\end{proof}

In \cite{DuFe}, Dutrieux and Ferenczi provided an example of two
non-Lipschitz isomorphic Banach spaces whose corresponding Lipschitz
free spaces are isomorphic. The following result is a strengthening of
the main result from \cite{DuFe}, in the sense that we make it
extensive to Lipschitz free $p$-spaces for $0<p<1$. As in
\cite{DuFe}, our approach relies on the fact that if $K$ is an
uncountable compact metric space then $\mathcal{C}(K)$ and $c_{0}$ are
not Lipschitz isomorphic (see \cite{JLS1996}).

\begin{Theorem}
Let $K$ be any infinite compact metric space. Then for every
$0<p\leq 1$ we have
\begin{equation*}
\mathcal{F}_{p}(\mathcal{C}(K))\simeq \mathcal{F}_{p}(c_{0})
\end{equation*}
(with uniformly bounded Banach-Mazur distance).
\end{Theorem}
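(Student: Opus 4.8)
The plan is to set $X:=\mathcal{F}_p(c_0)$ and $Y:=\mathcal{F}_p(\mathcal{C}(K))$, exhibit each as a complemented subspace of the other, and then close the argument with Pełczyński's decomposition method, for which Proposition~\ref{AJlA} supplies the crucial self-similarity $X\simeq\ell_p(X)$. The bridge from geometry to free $p$-spaces is a single functorial principle: a base-point-preserving Lipschitz map $f$ induces an operator $L_f$ with $\|L_f\|=\mathrm{Lip}(f)$, and if $r\circ g=\mathrm{id}$ for Lipschitz maps $g,r$, then $L_r\circ L_g=\mathrm{id}$; thus any Lipschitz retraction between the underlying spaces produces a complementation of the associated free $p$-spaces. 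The point worth stressing is that every map used below is \emph{purely metric}, so it serves simultaneously for all $p\in(0,1]$ and its constants are independent of $p$.

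For the inclusion of $X$ into $Y$ I would use that every infinite compact metric space carries a sequence of disjointly supported bumps, so that $c_0$ sits in $\mathcal{C}(K)$ as a linearly complemented subspace with distortion bounded by a universal constant (independent of $K$). Since linear maps are Lipschitz, an embedding $j\colon c_0\to\mathcal{C}(K)$ and a projection $P$ with $Pj=\mathrm{id}$ give $L_P L_j=\mathrm{id}_X$, realizing $X$ as a uniformly complemented subspace of $Y$. For the reverse inclusion I would invoke Aharoni's theorem to obtain a base-point-preserving bi-Lipschitz embedding $g\colon\mathcal{C}(K)\to c_0$ with universal distortion, together with a Lipschitz retraction $r\colon c_0\to g(\mathcal{C}(K))$ onto this copy; setting $\rho:=g^{-1}\circ r$ one has $\rho\circ g=\mathrm{id}_{\mathcal{C}(K)}$, whence $L_\rho L_g=\mathrm{id}_Y$ exhibits $Y$ as a complemented subspace of $X$. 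These are exactly the nonlinear ingredients already employed in \cite{DuFe} for $p=1$, and being $p$-free they apply here verbatim, with constants uniform in $K$.

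With $Y\simeq X\oplus A$, $X\simeq Y\oplus B$ and $X\simeq\ell_p(X)$ in hand, the decomposition method—whose bookkeeping carries over unchanged to the $p$-Banach category (cf. \cite{AlbiacKalton2016})—finishes the proof. Indeed $X\simeq\ell_p(X)\simeq\ell_p(Y\oplus B)\simeq\ell_p(Y)\oplus\ell_p(B)\simeq Y\oplus\big(\ell_p(Y)\oplus\ell_p(B)\big)\simeq Y\oplus X$, so $X\simeq X\oplus Y$; on the other hand $Y\oplus X\simeq(X\oplus A)\oplus X\simeq(X\oplus X)\oplus A\simeq X\oplus A\simeq Y$, using $X\oplus X\simeq X$. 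Combining the two yields $X\simeq X\oplus Y\simeq Y$. Since the complementation constants are uniform in $K$ and the constant in Proposition~\ref{AJlA} is fixed for each $p$, the resulting isomorphism has Banach--Mazur distance bounded uniformly over all infinite compact metric spaces $K$.

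I expect the main obstacle to be the reverse complementation, namely realizing $\mathcal{F}_p(\mathcal{C}(K))$ as a complemented subspace of $\mathcal{F}_p(c_0)$. When $K$ is uncountable neither Banach space linearly embeds in the other—$\mathcal{C}(K)\simeq\mathcal{C}[0,1]$ contains $\ell_2$, which admits no linear copy in $c_0$—so one is forced out of the linear category and must construct a genuine Lipschitz retraction of $c_0$ onto a bi-Lipschitz image of $\mathcal{C}(K)$. This is precisely the step where the obstruction of \cite{JLS1996} is sidestepped: mutual Lipschitz complementation is strictly weaker than Lipschitz isomorphism, yet it is already enough to force the free $p$-spaces to coincide.
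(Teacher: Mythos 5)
Your proposal is correct and follows the paper's argument essentially verbatim: mutual complementation of the free $p$-spaces via two Lipschitz retractions ($c_{0}$ linearly, hence Lipschitz, complemented in $\mathcal{C}(K)$ with universal constant for one direction; Aharoni's embedding combined with the fact that $\mathcal{C}(K)$ is an absolute Lipschitz retract, which is how the paper, citing \cite{Lindenstrauss1964}, justifies the retraction $r\colon c_{0}\to g(\mathcal{C}(K))$ that you invoke without proof, for the other), followed by Pe\l czy\'nski's decomposition method fed by the self-similarity $\mathcal{F}_{p}(c_{0})\simeq \ell_{p}(\mathcal{F}_{p}(c_{0}))$ of Proposition~\ref{AJlA}. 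The only blemish is the closing side remark that ``neither Banach space linearly embeds in the other'': $c_{0}$ certainly embeds (complementably) in $\mathcal{C}(K)$, as you yourself use; what fails is the embedding of $\mathcal{C}(K)$ into $c_{0}$.
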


\begin{proof}
On the one hand $c_{0}$ is complemented in $\mathcal{C}(K)$ (see
e.g. \cite{AlbiacKalton2016}*{Corollary 2.5.9}) so, in particular,
$c_{0}$ is a Lipschitz retract of $\mathcal{C}(K)$. On the other hand,
since $\mathcal{C}(K)$ is Lipschitz isomorphic to a subspace of
$c_{0}$ by \cite{Aharoni1974}, and $\mathcal{C}(K)$ is an absolute
Lipschitz retract (see \cite{Lindenstrauss1964}*{Theorem 6}) it
follows that $\mathcal{C}(K)$ is a Lipschitz retract of $c_{0}$. Then,
by \cite{AACD2018}*{Lemma~4.19}, $\mathcal{F}_{p}(c_{0})$ is
complemented in $\mathcal{F}_{p}(\mathcal{C}(K))$ and $\mathcal{F}
_{p}(\mathcal{C}(K))$ is complemented in $\mathcal{F}_{p}(c_{0})$. Now,
taking into account Proposition~\ref{AJlA}, Pe\l czy\'{n}ski's
decomposition method yields $\mathcal{F}_{p}(\mathcal{C}(K))\simeq
\mathcal{F}_{p}(c_{0})$. Finally, we note that since all constants
involved are independent of the compact space $K$, so is the isomorphism
constant.
\end{proof}

If $\mathcal{N}$ is a subset of a metric space $\mathcal{M}$ and
$\jmath $ denotes the inclusion from $\mathcal{N}$ into $\mathcal{M}$,
the canonical linear map $L_{\jmath }\colon \mathcal{F}(\mathcal{N})
\to \mathcal{F}(\mathcal{M})$ is an isometric embedding. Although this
result does not carry over, in general, to Lipschitz free $p$-spaces for
$p<1$ (see \cite{AACD2018}*{Theorem~6.1}), it is an open question
whether $L_{\jmath }\colon \mathcal{F}_{p}(\mathcal{N})\to
\mathcal{F}_{p}(\mathcal{M})$ is always an \emph{isomorphism} for
$p<1$ (see \cite{AACD2018}*{Question 6.2}).
Lemma~\ref{L24} sheds some light onto this matter.

\begin{Lemma}
\label{L24}
Let $0<p\le q\le 1$. We have the following dichotomy: Either there is
a $q$-metric space $\mathcal{M}$ and a subset $\mathcal{N}$ of
$\mathcal{M}$ for which $L_{\jmath }\colon \mathcal{F}_{p}(
\mathcal{N})\to \mathcal{F}_{p}(\mathcal{M})$ is not an isomorphism, or
there is a universal constant $C$ (depending only on $p$ and $q$) such
that $\Vert L_{\jmath }^{-1}\Vert \le C$ for every $q$-metric
space space $\mathcal{M}$ and every subset $\mathcal{N}$ of $\mathcal{M}$.
\end{Lemma}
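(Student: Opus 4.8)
The plan is to establish the dichotomy by a gluing argument: assuming the first alternative fails, so that $L_{\jmath}$ is an isomorphism for \emph{every} $q$-metric space $\mathcal{M}$ and every subset $\mathcal{N}$, I will show the inverses are uniformly bounded by manufacturing, out of any sequence of increasingly bad examples, a single example on which $L_{\jmath}$ fails to be bounded below. So suppose toward a contradiction that $\sup\Vert L_{\jmath}^{-1}\Vert=\infty$, the supremum taken over all admissible pairs. For each $n\in\mathbb{N}$ I may then choose a $q$-metric space $\mathcal{M}_{n}$ and a subset $\mathcal{N}_{n}$ (containing the base point) with $\Vert L_{\jmath_{n}}^{-1}\Vert>n$, where $\jmath_{n}\colon\mathcal{N}_{n}\hookrightarrow\mathcal{M}_{n}$; by the standing assumption each $L_{\jmath_{n}}$ is already an isomorphism, so these norms are finite.

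The heart of the matter is the correct way to glue these examples. The naive choice, the $\ell_{p}$-wedge $\maltese$ of Section~\ref{Sect2.1} for which Lemma~\ref{lem:2} gives an isometric splitting, is \emph{unavailable} here: the $\ell_{p}$-combination of $q$-metric spaces is only a $p$-metric space, since the $q$-triangle inequality through the base point breaks when $p<q$. Instead I would use the analogue of the wedge construction with $q$ in place of $p$, equipping $\maltese_{n}\mathcal{M}_{n}=\{(x_{n})\colon|\{n\colon x_{n}\neq0\}|\le1\}$ with the metric $d((x_{n}),(y_{n}))=(\sum_{n}d_{n}^{q}(x_{n},y_{n}))^{1/q}$; being a subset of the $\ell_{q}$-sum, which is a $q$-metric space by the coordinatewise $q$-triangle inequality, this is genuinely a $q$-metric space, and likewise $\mathcal{N}:=\maltese_{n}\mathcal{N}_{n}$ is a $q$-metric space sitting isometrically inside $\mathcal{M}:=\maltese_{n}\mathcal{M}_{n}$. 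The key observation is that Lemma~\ref{lem:isoWithELLPSum} nonetheless applies to this $q$-level wedge: for $x$ in the $\alpha$-th factor and $y$ in the $\beta$-th factor with $\alpha\neq\beta$, writing $s=d^{q}(x,0)$ and $t=d^{q}(y,0)$, concavity of $r\mapsto r^{p/q}$ yields $d^{p}(x,0)+d^{p}(y,0)=s^{p/q}+t^{p/q}\le 2^{1-p/q}(s+t)^{p/q}=2^{1-p/q}d^{p}(x,y)$, so hypothesis \eqref{eq:almostEllPSum} holds with the universal constant $K=2^{1/p-1/q}$. Hence Lemma~\ref{lem:isoWithELLPSum} produces onto isomorphisms $T^{\mathcal{M}}\colon(\bigoplus_{n}\mathcal{F}_{p}(\mathcal{M}_{n}))_{p}\to\mathcal{F}_{p}(\mathcal{M})$ and $T^{\mathcal{N}}\colon(\bigoplus_{n}\mathcal{F}_{p}(\mathcal{N}_{n}))_{p}\to\mathcal{F}_{p}(\mathcal{N})$, each with norm $\le1$ and inverse of norm $\le 2^{1/p-1/q}$.

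Finally I would transport $L_{\jmath}$ through these identifications. The factor inclusions intertwine the inclusions, $\jmath\circ I_{n}^{\mathcal{N}}=I_{n}^{\mathcal{M}}\circ\jmath_{n}$ as maps of pointed metric spaces, so the naturality of the canonical maps $L_{(\cdot)}$ gives $L_{\jmath}\circ T^{\mathcal{N}}=T^{\mathcal{M}}\circ(\bigoplus_{n}L_{\jmath_{n}})$; since $T^{\mathcal{M}}$ and $T^{\mathcal{N}}$ are isomorphisms, $L_{\jmath}$ is an isomorphism if and only if $\bigoplus_{n}L_{\jmath_{n}}$ is. But the latter cannot be bounded below: from $\Vert L_{\jmath_{n}}^{-1}\Vert>n$ I may pick $x_{n}\in\mathcal{F}_{p}(\mathcal{N}_{n})$ with $\Vert x_{n}\Vert=1$ and $\Vert L_{\jmath_{n}}(x_{n})\Vert<1/n$, and placing $x_{n}$ in the $n$-th coordinate produces unit vectors whose images have norm $<1/n\to0$. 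Thus $\bigoplus_{n}L_{\jmath_{n}}$, and with it $L_{\jmath}$, is not an isomorphism, contradicting that the first alternative was assumed to fail. Therefore $\sup\Vert L_{\jmath}^{-1}\Vert<\infty$, which is the second alternative.

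The one genuinely delicate point, and the step I expect to be the main obstacle, is precisely the passage to the $q$-level wedge together with the verification that Lemma~\ref{lem:isoWithELLPSum} survives the mismatch between the exponent $p$ governing the free space and the exponent $q$ governing the metric; once the constant $K=2^{1/p-1/q}$ is produced by the concavity estimate, the remaining identifications and the failure of the lower bound are purely formal.
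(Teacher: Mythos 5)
Your proof is correct and follows essentially the same route as the paper's: assuming both alternatives of Lemma~\ref{L24} fail, glue the bad pairs $(\mathcal{M}_{n},\mathcal{N}_{n})$ into the single pair $\mathcal{N}:=\maltese_{n}\mathcal{N}_{n}\subseteq\maltese_{n}\mathcal{M}_{n}=:\mathcal{M}$, split the free $p$-spaces as $\ell_{p}$-sums so that $L_{\jmath}$ is conjugate to $\bigoplus_{n}L_{\jmath_{n}}$, and conclude $\sup_{n}\Vert L_{\jmath_{n}}^{-1}\Vert<\infty$, a contradiction. The one place you diverge is a place where you are actually more careful than the paper: the paper simply invokes Lemma~\ref{lem:2}, leaving implicit that the wedge of $q$-metric spaces must be formed with the $\ell_{q}$-combination for $\mathcal{M}$ to be a $q$-metric space (the $\ell_{p}$-wedge is in general only $p$-metric, so the standing hypothesis would not apply to it), and Lemma~\ref{lem:2} as stated is an isometric identification at matching exponents, so it does not apply verbatim to $\mathcal{F}_{p}$ of the $q$-level wedge. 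Your substitute --- checking hypothesis \eqref{eq:almostEllPSum} of Lemma~\ref{lem:isoWithELLPSum} directly for the $q$-wedge, with $K=2^{1/p-1/q}$ obtained from concavity of $r\mapsto r^{p/q}$ --- is exactly the right way to close this gap, and it costs nothing, since an isomorphic splitting with constants depending only on $p$ and $q$ (rather than an isometric one) is all the contradiction requires; the remaining steps (naturality $L_{\jmath}\circ T^{\mathcal{N}}=T^{\mathcal{M}}\circ\bigoplus_{n}L_{\jmath_{n}}$ and the normalized vectors witnessing failure of the lower bound) are sound.
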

\begin{proof}
Assume, by contradiction, that the lemma fails to be true. Then
$L_{\jmath }\colon \mathcal{F}_{p}(\mathcal{N})\to \mathcal{F}_{p}(
\mathcal{M})$ is always an isomorphism, while $\Vert L_{\jmath }^{-1}
\Vert $ can be arbitrarily large. Thus there is a sequence $(
\mathcal{M}_{n})_{n=1}^{\infty }$ of $q$-metric spaces and a sequence
$(\mathcal{N}_{n})_{n=1}^{\infty }$, where each $\mathcal{N}_{n}$ is a
subset of $\mathcal{M}_{n}$, such that, if $L_{n}$ denotes the canonical
linear map from $\mathcal{F}_{p}(\mathcal{N}_{n})$ into $\mathcal{F}
_{p}(\mathcal{M}_{n})$ then $\sup _{n}\Vert L_{n}^{-1} \Vert =\infty $.

The $q$-metric space $\mathcal{N}:=\maltese _{n=1}^{\infty }
\mathcal{N}_{n}$ is a subset of the $q$-metric space $\mathcal{M}:=
\maltese _{n=1}^{\infty }\mathcal{M}_{n}$. By assumption, the canonical
map from $\mathcal{F}_{p}(\mathcal{N})$ into $\mathcal{F}_{p}(
\mathcal{M})$ is an isomorphism. Then, by Lemma~\ref{lem:2}, there is
an isomorphic embedding $L_{\omega }$ from $\bigoplus _{n=1}^{\infty }
\mathcal{F}_{p}(\mathcal{N}_{n})$ into $\bigoplus _{n=1}^{\infty }
\mathcal{F}_{p}(\mathcal{M}_{n})$ given by
\begin{equation*}
(x_{n})_{n=1}^{\infty }\mapsto (L_{n}(x_{n}))_{n=1}^{\infty }.
\end{equation*}
Hence, $\sup _{n} \Vert L_{n}^{-1}\Vert \le \Vert L_{\omega }^{-1}
\Vert <\infty $, which is an absurdity.
\end{proof}

\subsection{Quotients of quasimetric spaces}\label{Sect2.2}
Suppose that $(\mathcal{M},d,0)$ is a pointed $p$-metric space,
$0<p\leq 1$, and that $\mathcal{N}\subset \mathcal{M}$ is a closed
subset of $\mathcal{M}$ with $0\in \mathcal{N}$. Following
\cite{BuragoBook} we put
\begin{equation*}
d_{\mathcal{M}/\mathcal{N}}(x,y):= \min \{d(x,y), (d^{p}(x,
\mathcal{N})+d^{p}(y,\mathcal{N}))^{1/p}\},\quad x,y\in \mathcal{M}.
\end{equation*}
It is clear that $d_{\mathcal{M}/\mathcal{N}}(\cdot , \cdot )$ is
symmetric and that $d_{\mathcal{M}/\mathcal{N}}(x,y)=0$ if and only if
either $x=y$ or $x,y\in \mathcal{N}$. Moreover, it is straightforward
to check that $d_{\mathcal{M}/\mathcal{N}}$ satisfies the $p$-triangle
law. Hence $((\mathcal{M}\setminus \mathcal{N})\cup \{0\},d_{
\mathcal{M}/\mathcal{N}},0)$ is a pointed $p$-metric space, which we
denote by $\mathcal{M}/\mathcal{N}$ and that we call the \emph{quotient
of $\mathcal{M}$ by $\mathcal{N}$}.

Let $Q_{\mathcal{M}/\mathcal{N}}\colon \mathcal{M}\to \mathcal{M}/
\mathcal{N}$ be the \emph{quotient map} given by
\begin{equation*}
Q_{\mathcal{M}/\mathcal{N}}(x)=
\begin{cases}
x
& \text{ if } x\in \mathcal{M}\setminus \mathcal{N},
\\
0
& \text{ if } x\in \mathcal{N}.
\end{cases}
\end{equation*}
We start our study of quotient spaces with some elementary properties.

\begin{Proposition}
\label{Prop:LipMapQuotients}
Suppose that $(\mathcal{M},d,0)$ is a pointed $p$-metric space,
$0<p\leq 1$, and that $\mathcal{N}\subseteq \mathcal{M}$ is a closed
subset of $\mathcal{M}$ with $0\in \mathcal{N}$. Given another
$p$-metric space $X$, the map $T\colon \mathrm{Lip}_{0}(\mathcal{M}/
\mathcal{N},X) \to \mathrm{Lip}_{0}(\mathcal{M},X)$ defined by
$T(f)=f\circ Q_{\mathcal{M}/\mathcal{N}}$ is an isometric embedding with
range
\begin{equation*}
\mathrm{Lip}_{\mathcal{N}}(\mathcal{M},X):=\{f\in \mathrm{Lip}(M,X)
\colon f|_{\mathcal{N}}=0\}.
\end{equation*}
Moreover, if $X$ is $p$-normed, $T$ is linear.
\end{Proposition}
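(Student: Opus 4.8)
The plan is to establish the four assertions of the proposition in turn — linearity, well-definedness of $T$, its isometric character, and the identification of its range — with essentially all the work concentrated in one reverse Lipschitz estimate.

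Linearity is the easy part and only makes sense when $X$ carries a linear structure: for $p$-normed $X$ the assignment $f\mapsto f\circ Q_{\mathcal{M}/\mathcal{N}}$ is manifestly additive and homogeneous, since composition with a fixed map commutes with pointwise vector operations. For the rest, write $Q:=Q_{\mathcal{M}/\mathcal{N}}$ and let $d_X$ denote the $p$-metric of $X$. I would first check that $Q$ is $1$-Lipschitz by a short case analysis: for $x,y\in\mathcal{M}\setminus\mathcal{N}$ the definition gives $d_{\mathcal{M}/\mathcal{N}}(x,y)\le d(x,y)$; if $y\in\mathcal{N}$ then $Q(y)=0$ and, since $0\in\mathcal{N}$ forces $d(0,\mathcal{N})=0$ and $d(x,\mathcal{N})\le d(x,0)$, one finds $d_{\mathcal{M}/\mathcal{N}}(Q(x),0)=d(x,\mathcal{N})\le d(x,y)$; the remaining case is trivial. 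Consequently $\mathrm{Lip}(T(f))\le\mathrm{Lip}(f)$, so $T(f)$ is Lipschitz; and because $Q$ sends all of $\mathcal{N}$ to the base point while $f(0)=0$, we get $T(f)|_{\mathcal{N}}=0$. Thus $T$ maps into $\mathrm{Lip}_{\mathcal{N}}(\mathcal{M},X)$ (note $\mathrm{Lip}_{\mathcal{N}}\subseteq\mathrm{Lip}_0$ as $0\in\mathcal{N}$), and one half of the isometry is already in hand.

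The step I expect to be the main obstacle is the reverse inequality $\mathrm{Lip}(f)\le\mathrm{Lip}(T(f))=:L$, because the quotient distance is a minimum and its second branch $(d^p(u,\mathcal{N})+d^p(v,\mathcal{N}))^{1/p}$ is not visible to a single application of the Lipschitz bound for $T(f)$. Fix $u,v\in\mathcal{M}\setminus\mathcal{N}$ (the cases involving the base point are a one-line variant). Evaluating $T(f)=f\circ Q$ at the pair $u,v$ gives $d_X(f(u),f(v))\le L\,d(u,v)$ at once. For the other branch, the key idea is to route through the base point of $X$, which is the common image $f(Q(z))=f(0)=0$ of every $z\in\mathcal{N}$: the $p$-triangle inequality in $X$ yields
\begin{equation*}
d_X(f(u),f(v))^p \le d_X(f(u),0)^p + d_X(0,f(v))^p,
\end{equation*}
and for each $z\in\mathcal{N}$ one has $d_X(f(u),0)=d_X(T(f)(u),T(f)(z))\le L\,d(u,z)$; taking the infimum over $z\in\mathcal{N}$, and likewise for $v$, gives $d_X(f(u),f(v))\le L\,(d^p(u,\mathcal{N})+d^p(v,\mathcal{N}))^{1/p}$. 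Combining the two branches produces exactly $d_X(f(u),f(v))\le L\,d_{\mathcal{M}/\mathcal{N}}(u,v)$, so $T$ is isometric.

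It remains to show that the range is all of $\mathrm{Lip}_{\mathcal{N}}(\mathcal{M},X)$. Given $g\in\mathrm{Lip}_{\mathcal{N}}(\mathcal{M},X)$, I would define $f$ on $(\mathcal{M}\setminus\mathcal{N})\cup\{0\}=\mathcal{M}/\mathcal{N}$ by $f(u)=g(u)$ for $u\notin\mathcal{N}$ and $f(0)=0$; since $g$ already vanishes on $\mathcal{N}$, the identity $f\circ Q=g$ holds by inspection. The only thing to verify is that $f$ is Lipschitz for $d_{\mathcal{M}/\mathcal{N}}$, and this is precisely the argument of the previous paragraph read in reverse, now using $g|_{\mathcal{N}}=0$ to route through $0\in X$; it simultaneously re-proves $\mathrm{Lip}(f)=\mathrm{Lip}(g)$, consistent with isometry. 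This exhibits $g=T(f)$ and completes the proof.
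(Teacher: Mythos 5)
Your proposal is correct and takes essentially the same route as the paper: the crux in both is the estimate $d_X(g(x),0)\le \mathrm{Lip}(g)\,d(x,\mathcal{N})$, obtained by infimizing $d_X(g(x),g(z))$ over $z\in\mathcal{N}$ where $g$ vanishes, followed by the $p$-triangle inequality through the base point of $X$ to capture the second branch of $d_{\mathcal{M}/\mathcal{N}}$. The only organizational difference is that the paper runs this computation once for an arbitrary $g\in\mathrm{Lip}_{\mathcal{N}}(\mathcal{M},X)$, obtaining the reverse Lipschitz bound and the surjectivity of $T$ in one stroke, whereas you carry it out separately for the two assertions while noting they are the same argument.
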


\begin{proof}
It is clear that if $f\in \mathrm{Lip}_{0}(\mathcal{M}/\mathcal{N},X)$,
$f\circ Q_{\mathcal{M}/\mathcal{N}}\in \mathrm{Lip}_{\mathcal{N}}(
\mathcal{M},X)$ and $\operatorname{\mathrm{{Lip}}}(f\circ Q_{
\mathcal{M}/\mathcal{N}})\le \mathrm{Lip}(f)$. Let $g\colon
\mathcal{M}\to X$ be a $C$-Lipschitz map with $g(x)=0$ for all
$x\in \mathcal{N}$. Then, for $x\in \mathcal{M}$,
\begin{equation*}
d(g(x),0)=\inf _{y\in \mathcal{N}} d(g(x), g(y))\le C
\inf _{y\in \mathcal{N}} d(x,y)=Cd(x,\mathcal{N}).
\end{equation*}
Consequently, for all $x$, $y\in \mathcal{M}$,
\begin{align*}
d(g(x),g(y))
&\le \left ( d^{p}(g(x),0)+d^{p}(g(y),0) \right )^{1/p}\\
&\le C (d^{p}(x,\mathcal{N}) + d^{p}(y,\mathcal{N}))^{1/p}.
\end{align*}
Hence $d(g(x),g(y))\le C d_{\mathcal{M}/\mathcal{N}}(x,y)$. So, if
$f\colon \mathcal{M}/\mathcal{N}\to X$ is the unique map satisfying
$f\circ Q_{\mathcal{M}/\mathcal{N}}=g$, $f$ is $C$-Lipschitz.
\end{proof}

Proposition~\ref{Prop:LipMapQuotients} allows us to identify $q$-metric
envelopes of quotients of $p$-metric spaces for $0<p<q\le 1$. We refer
the reader to \cite{AACD2018}*{\S 3} for a precise definition of
metric envelopes, a concept that arises most naturally while
investigating Lipschitz free $p$-spaces over quasimetric spaces.

\begin{Corollary}
\label{cor:MetricEnvelope}Let $0<p\le q\le 1$. Suppose $(\mathcal{M},d,0)$ is a pointed $p$-metric
space and that $\mathcal{N}\subseteq \mathcal{M}$ is a closed subset of
$\mathcal{M}$ with $0\in \mathcal{N}$. Let $\mathcal{N}^{q}$ be the
closure of $J_{q}(\mathcal{N})$ in the $q$-metric envelope $(
\widetilde{\mathcal{M}}^{q},J_{q})$ of $\mathcal{M}$. Then the
$q$-metric envelope of $\mathcal{M}/\mathcal{N}$ is $(
\widetilde{\mathcal{M}}^{q}/\mathcal{N}^{q}, J_{q,\mathcal{N}})$, where
$J_{q,\mathcal{N}}$ is the unique map for which the diagram
\begin{equation}
\label{diam1}
\xymatrix{\ensuremath {\mathcal {M}}\ar[r]^{J_{q}} \ar[d]_{Q_{\ensuremath {\mathcal {M}}/\ensuremath {\mathcal {N}}}} & \widetilde{\ensuremath {\mathcal {M}}^{q}}\ar[d]^{Q_{\widetilde{\ensuremath {\mathcal {M}}^{q}}/{\ensuremath {\mathcal {N}}^{q}}}} \\
\ensuremath {\mathcal {M}}/\ensuremath {\mathcal {N}}\ar[r]^{J_{q,\ensuremath {\mathcal {N}}}}& \widetilde{\ensuremath {\mathcal {M}}^{q}}/ {\ensuremath {\mathcal {N}}^{q}} }
\end{equation}
commutes.
\end{Corollary}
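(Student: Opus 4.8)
The plan is to verify directly that the pair $(\widetilde{\mathcal{M}}^{q}/\mathcal{N}^{q},J_{q,\mathcal{N}})$ enjoys the universal property characterizing the $q$-metric envelope of the $p$-metric space $\mathcal{M}/\mathcal{N}$: every Lipschitz map from $\mathcal{M}/\mathcal{N}$ into a $q$-metric space factors, with the same Lipschitz constant, uniquely through $J_{q,\mathcal{N}}$. First I would record the preliminaries. The space $\widetilde{\mathcal{M}}^{q}/\mathcal{N}^{q}$ is again a $q$-metric space by the quotient construction of Section~\ref{Sect2.2}, applied to the closed subset $\mathcal{N}^{q}\ni 0$ of the $q$-metric space $\widetilde{\mathcal{M}}^{q}$. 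Moreover $J_{q,\mathcal{N}}$ is a well-defined, base-point-preserving, $1$-Lipschitz map, which is itself an instance of Proposition~\ref{Prop:LipMapQuotients}: the composition $Q_{\widetilde{\mathcal{M}}^{q}/\mathcal{N}^{q}}\circ J_{q}$ is $1$-Lipschitz and vanishes on $\mathcal{N}$ (since $J_{q}(\mathcal{N})\subseteq \mathcal{N}^{q}$), hence lies in $\mathrm{Lip}_{\mathcal{N}}(\mathcal{M},\widetilde{\mathcal{M}}^{q}/\mathcal{N}^{q})$, and its preimage under the isometry $T$ of that proposition is precisely the unique map $J_{q,\mathcal{N}}$ making \eqref{diam1} commute.

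The core of the argument is a sandwich of two applications of Proposition~\ref{Prop:LipMapQuotients} around the universal property of $(\widetilde{\mathcal{M}}^{q},J_{q})$. Let $X$ be a $q$-metric space and $f\in \mathrm{Lip}_{0}(\mathcal{M}/\mathcal{N},X)$. I would first push $f$ up to $\mathcal{M}$: by Proposition~\ref{Prop:LipMapQuotients}, $g:=f\circ Q_{\mathcal{M}/\mathcal{N}}$ lies in $\mathrm{Lip}_{\mathcal{N}}(\mathcal{M},X)$ with $\mathrm{Lip}(g)=\mathrm{Lip}(f)$. Next, by the universal property of the $q$-metric envelope, $g$ lifts to the unique $\widetilde{g}\colon \widetilde{\mathcal{M}}^{q}\to X$ with $\widetilde{g}\circ J_{q}=g$ and $\mathrm{Lip}(\widetilde{g})=\mathrm{Lip}(g)$. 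The key observation is that $\widetilde{g}$ annihilates all of $\mathcal{N}^{q}$: it vanishes on $J_{q}(\mathcal{N})$ because $\widetilde{g}\circ J_{q}=g$ and $g|_{\mathcal{N}}=0$, and, being continuous, it vanishes on the closure $\mathcal{N}^{q}$ of $J_{q}(\mathcal{N})$. Thus $\widetilde{g}\in \mathrm{Lip}_{\mathcal{N}^{q}}(\widetilde{\mathcal{M}}^{q},X)$, and a second application of Proposition~\ref{Prop:LipMapQuotients}, now for the pair $(\widetilde{\mathcal{M}}^{q},\mathcal{N}^{q})$, yields a unique $h\in \mathrm{Lip}_{0}(\widetilde{\mathcal{M}}^{q}/\mathcal{N}^{q},X)$ with $h\circ Q_{\widetilde{\mathcal{M}}^{q}/\mathcal{N}^{q}}=\widetilde{g}$ and $\mathrm{Lip}(h)=\mathrm{Lip}(\widetilde{g})=\mathrm{Lip}(f)$.

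It then remains to check that $h$ factors $f$ through $J_{q,\mathcal{N}}$. Chasing \eqref{diam1} gives
\[
h\circ J_{q,\mathcal{N}}\circ Q_{\mathcal{M}/\mathcal{N}} = h\circ Q_{\widetilde{\mathcal{M}}^{q}/\mathcal{N}^{q}}\circ J_{q} = \widetilde{g}\circ J_{q} = g = f\circ Q_{\mathcal{M}/\mathcal{N}},
\]
and since $Q_{\mathcal{M}/\mathcal{N}}$ is onto we obtain $h\circ J_{q,\mathcal{N}}=f$. For uniqueness I would show that $J_{q,\mathcal{N}}(\mathcal{M}/\mathcal{N})$ is dense in $\widetilde{\mathcal{M}}^{q}/\mathcal{N}^{q}$: $J_{q}(\mathcal{M})$ is dense in $\widetilde{\mathcal{M}}^{q}$, and applying the continuous surjection $Q_{\widetilde{\mathcal{M}}^{q}/\mathcal{N}^{q}}$ together with the commutativity of \eqref{diam1} identifies $J_{q,\mathcal{N}}(\mathcal{M}/\mathcal{N})=Q_{\widetilde{\mathcal{M}}^{q}/\mathcal{N}^{q}}(J_{q}(\mathcal{M}))$ as a dense subset; any two Lipschitz factorizations of $f$ must then agree on a dense set, hence everywhere. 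This establishes the universal property and therefore the corollary.

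I do not expect a serious obstacle, since the bulk of the work is formal diagram chasing; the two points that genuinely require care are (i) the passage from vanishing on $J_{q}(\mathcal{N})$ to vanishing on its closure $\mathcal{N}^{q}$, which is exactly where the continuity of $\widetilde{g}$ and the definition of $\mathcal{N}^{q}$ as a closure enter, and (ii) the density of the range of $J_{q,\mathcal{N}}$, needed for uniqueness. Throughout, one must keep track of the fact that both Proposition~\ref{Prop:LipMapQuotients} and the universal property of the envelope preserve Lipschitz constants exactly, so that $\mathrm{Lip}(h)=\mathrm{Lip}(f)$ and the pair $(\widetilde{\mathcal{M}}^{q}/\mathcal{N}^{q},J_{q,\mathcal{N}})$ is genuinely the $q$-metric envelope rather than merely a Lipschitz-equivalent model.
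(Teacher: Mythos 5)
Your proposal is correct and follows essentially the same route as the paper: you define $J_{q,\mathcal{N}}$ via Proposition~\ref{Prop:LipMapQuotients} and then sandwich the universal property of the $q$-metric envelope between two applications of that proposition, concluding via the surjectivity of $Q_{\mathcal{M}/\mathcal{N}}$. The only minor divergences are that you spell out a step the paper leaves implicit --- that the lifted map $\widetilde{g}$ vanishes on all of $\mathcal{N}^{q}$ by continuity, not merely on $J_{q}(\mathcal{N})$, which is exactly what licenses the second application of the quotient's universal property --- and that you obtain uniqueness of $h$ from density of the range of $J_{q,\mathcal{N}}$ rather than from the uniqueness clauses of the two universal properties as the paper does; both variants are sound.
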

\begin{proof}
Proposition~\ref{Prop:LipMapQuotients} applied to $Q_{
\widetilde{\mathcal{M}^{q}}/{\mathcal{N}^{q}}}\circ J_{q} \in
\mathrm{Lip}_{\mathcal{N}}(\mathcal{M},\widetilde{\mathcal{M}^{q}}/
{\mathcal{N}^{q}})$ gives the existence and uniqueness of the map
$J_{q,\mathcal{N}}$ with $\mathrm{Lip}(J_{q,\mathcal{N}}) = 1$ that
makes the diagram \eqref{diam1} commutative. Let $f\colon
\mathcal{M}/\mathcal{N}\to X$ be a $C$-Lipschitz map into a $q$-metric
space. If we set $f(0)$ as the base point of $X$, applying first the
universal property of $q$-metric envelopes and then that of quotients,
we obtain unique $C$-Lipschitz maps $g$ and $h$ such that the diagram
\begin{equation}
\label{diam2}
\xymatrix{\ensuremath {\mathcal {M}}\ar[r]^{J_{q}} \ar[d]_{Q_{\ensuremath {\mathcal {M}}/\ensuremath {\mathcal {N}}}} & \widetilde{\ensuremath {\mathcal {M}}^{q}} \ar[ddl]^{g}\ar[d]^{Q_{\widetilde{\ensuremath {\mathcal {M}}^{q}}/{\ensuremath {\mathcal {N}}^{q}}}} \\
\ensuremath {\mathcal {M}}/\ensuremath {\mathcal {N}}\ar[d]_f & \widetilde{\ensuremath {\mathcal {M}}^{q}}/ {\ensuremath {\mathcal {N}}^{q}} \ar[dl]^{h} \\
X &
}
\end{equation}
commutes. Using that $Q_{\mathcal{M}/\mathcal{N}}$ is onto we deduce
that \eqref{diam1} and \eqref{diam2} merge in the commutative diagram
\begin{equation}
\label{diam3}
\xymatrix{\ensuremath {\mathcal {M}}\ar[r]^{J_{q}} \ar[d]_{Q_{\ensuremath {\mathcal {M}}/\ensuremath {\mathcal {N}}}} & \widetilde{\ensuremath {\mathcal {M}}^{q}}\ar[d]^{Q_{\widetilde{\ensuremath {\mathcal {M}}^{q}}/{\ensuremath {\mathcal {N}}^{q}}}} \\
\ensuremath {\mathcal {M}}/\ensuremath {\mathcal {N}}\ar[r]^{J_{q,\ensuremath {\mathcal {N}}}} \ar[d]_f & \widetilde{\ensuremath {\mathcal {M}}^{q}}/ {\ensuremath {\mathcal {N}}^{q}} \ar[dl]^{h} \\
X &
}
\end{equation}
The uniqueness of $g$ and $h$ in \eqref{diam2} gives that the map $h$ in \eqref{diam3} is unique.
\end{proof}

\begin{Corollary}
\label{Cor:BanachEnvelopeQuotient}
Let $0<p\le q\le 1$. Suppose that $(\mathcal{M},d,0)$ is a pointed
$p$-metric space and that $\mathcal{N}$ is a closed
subset of $\mathcal{M}$ with $0\in \mathcal{N}$. Then the $q$-Banach
envelope of $\mathcal{F}_{p}(\mathcal{M}/\mathcal{N})$ is $
\mathcal{F}_{q}(\widetilde{\mathcal{M}^{q}}/\mathcal{N}^{q})$.
\end{Corollary}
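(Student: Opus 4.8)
The plan is to reduce the statement to the identification of the $q$-metric envelope of $\mathcal{M}/\mathcal{N}$ already obtained in Corollary~\ref{cor:MetricEnvelope}, and then to invoke the general principle from \cite{AACD2018}*{\S 3} that passing to $q$-Banach envelopes on the level of Lipschitz free $p$-spaces corresponds to passing to $q$-metric envelopes on the level of the underlying $p$-metric space. Concretely, for any pointed $p$-metric space $\mathcal{S}$ the $q$-Banach envelope of $\mathcal{F}_{p}(\mathcal{S})$ is isometrically $\mathcal{F}_{q}(\widetilde{\mathcal{S}^{q}})$, with the canonical linear map induced by the envelope map $J_{q}\colon \mathcal{S}\to \widetilde{\mathcal{S}^{q}}$. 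Taking $\mathcal{S}=\mathcal{M}/\mathcal{N}$ and substituting $\widetilde{(\mathcal{M}/\mathcal{N})^{q}}=\widetilde{\mathcal{M}^{q}}/\mathcal{N}^{q}$ from Corollary~\ref{cor:MetricEnvelope} then yields the result at once.

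Should the packaged statement not be available in exactly this form, I would instead verify the universal property of the $q$-Banach envelope by hand, chaining three universal properties. Given a $q$-Banach space $Y$ and a bounded linear map $\Phi\colon \mathcal{F}_{p}(\mathcal{M}/\mathcal{N})\to Y$, the universal property of $\mathcal{F}_{p}$ identifies $\Phi$ with a Lipschitz map $\varphi\colon \mathcal{M}/\mathcal{N}\to Y$ satisfying $\mathrm{Lip}(\varphi)=\Vert \Phi\Vert$. Since $Y$ is $q$-normed, the universal property of the $q$-metric envelope (Corollary~\ref{cor:MetricEnvelope}) factors $\varphi$ uniquely as $\varphi=\psi\circ J_{q,\mathcal{N}}$ for a Lipschitz map $\psi\colon \widetilde{\mathcal{M}^{q}}/\mathcal{N}^{q}\to Y$ with the same Lipschitz constant. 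Finally, the universal property of $\mathcal{F}_{q}$ linearizes $\psi$ to a bounded operator $\Psi\colon \mathcal{F}_{q}(\widetilde{\mathcal{M}^{q}}/\mathcal{N}^{q})\to Y$ with $\Vert \Psi\Vert =\mathrm{Lip}(\psi)=\Vert \Phi\Vert$. Reading this correspondence backwards shows that composition with the canonical linear map $L_{J_{q,\mathcal{N}}}\colon \mathcal{F}_{p}(\mathcal{M}/\mathcal{N})\to \mathcal{F}_{q}(\widetilde{\mathcal{M}^{q}}/\mathcal{N}^{q})$ sets up a norm-preserving bijection between operators into $q$-Banach spaces, which is precisely the defining property of the $q$-Banach envelope.

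The only genuinely delicate point is the bookkeeping of the canonical maps: one must check that the operator $L_{J_{q,\mathcal{N}}}$ induced by the envelope map of Corollary~\ref{cor:MetricEnvelope} coincides with the canonical map into the $q$-Banach envelope, so that the factorizations above compose to the identity and the identification is canonical rather than merely abstract. This is exactly where the commutativity of diagram~\eqref{diam1} is needed, since it guarantees $J_{q,\mathcal{N}}\circ Q_{\mathcal{M}/\mathcal{N}}=Q_{\widetilde{\mathcal{M}^{q}}/\mathcal{N}^{q}}\circ J_{q}$ and hence that the relevant linearizations agree on the generating differences. Once this compatibility is in place, the uniqueness clauses in the three universal properties force the correspondence to be a surjective isometry, completing the proof.
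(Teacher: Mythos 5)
Your proposal is correct and follows essentially the same route as the paper, whose entire proof is to combine Corollary~\ref{cor:MetricEnvelope} with \cite{AACD2018}*{Proposition 4.20} --- the latter being precisely the packaged statement you invoke in your first paragraph, so your reduction goes through directly. Your fallback argument chaining the three universal properties is sound as well, but it amounts to re-proving that cited proposition in the special case $\mathcal{S}=\mathcal{M}/\mathcal{N}$, so it is not needed.
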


\begin{proof}
Just combine Corollary~\ref{cor:MetricEnvelope} with
\cite{AACD2018}*{Proposition 4.20}.
\end{proof}

\begin{Corollary}
\label{Cor:DualityQuotient}
Let $0<p\le 1$. Suppose that $(\mathcal{M},d,0)$ is a pointed $p$-metric
space and that $\mathcal{N}$ is a closed subset of
$\mathcal{M}$ with $0\in \mathcal{N}$. Then the dual space of
$\mathcal{F}_{p}(\mathcal{M}/\mathcal{N})$ is $\mathrm{Lip}_{
\mathcal{N}}(\mathcal{M},\mathbb{R})$ under the dual pairing given by
$\langle f,\delta _{\mathcal{M}/\mathcal{N}}(x)\rangle =f(x)$ for all
$f\in \mathrm{Lip}_{\mathcal{N}}(\mathcal{M},\mathbb{R})$ and all
$x\in \mathcal{M}\setminus \mathcal{N}$.
\end{Corollary}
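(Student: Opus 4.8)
The plan is to combine the standard duality for Lipschitz free $p$-spaces with the isometric identification of Lipschitz function spaces already established in Proposition~\ref{Prop:LipMapQuotients}. The corollary is essentially a formal consequence of these two ingredients, so no substantial new work is required; the only care needed is in tracking the identifications so that the stated pairing formula comes out correctly.

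First I would recall the baseline duality. Applying the universal property of $\mathcal{F}_{p}$ stated in the introduction with the scalar field $\mathbb{R}$ as the target $p$-Banach space, one obtains an isometric isomorphism between $\mathcal{F}_{p}(\mathcal{M}/\mathcal{N})^{\ast}$ and $\mathrm{Lip}_{0}(\mathcal{M}/\mathcal{N},\mathbb{R})$: a functional $\varphi$ corresponds to the Lipschitz function $g=\varphi\circ\delta_{\mathcal{M}/\mathcal{N}}$, so that $\langle g,\delta_{\mathcal{M}/\mathcal{N}}(z)\rangle=g(z)$ for every $z\in\mathcal{M}/\mathcal{N}$. Next I would invoke Proposition~\ref{Prop:LipMapQuotients} with $X=\mathbb{R}$. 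Since $\mathbb{R}$ is $p$-normed, the map $T(g)=g\circ Q_{\mathcal{M}/\mathcal{N}}$ is a \emph{linear} isometric embedding of $\mathrm{Lip}_{0}(\mathcal{M}/\mathcal{N},\mathbb{R})$ onto $\mathrm{Lip}_{\mathcal{N}}(\mathcal{M},\mathbb{R})$. Composing the two isomorphisms produces an isometric isomorphism $\mathcal{F}_{p}(\mathcal{M}/\mathcal{N})^{\ast}\cong\mathrm{Lip}_{\mathcal{N}}(\mathcal{M},\mathbb{R})$, which is the asserted identification.

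It remains to verify the pairing formula. Given $f\in\mathrm{Lip}_{\mathcal{N}}(\mathcal{M},\mathbb{R})$, its preimage $g\in\mathrm{Lip}_{0}(\mathcal{M}/\mathcal{N},\mathbb{R})$ under $T$ is characterized by $g\circ Q_{\mathcal{M}/\mathcal{N}}=f$. For $x\in\mathcal{M}\setminus\mathcal{N}$ we have $Q_{\mathcal{M}/\mathcal{N}}(x)=x$, and hence $g(x)=g(Q_{\mathcal{M}/\mathcal{N}}(x))=f(x)$. Chaining this with the baseline duality gives $\langle f,\delta_{\mathcal{M}/\mathcal{N}}(x)\rangle=\langle g,\delta_{\mathcal{M}/\mathcal{N}}(x)\rangle=g(x)=f(x)$ for every $x\in\mathcal{M}\setminus\mathcal{N}$, as required. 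I do not anticipate any genuine obstacle here: the entire argument is a composition of two isometries, and the pairing formula falls out directly from the definition of the quotient map $Q_{\mathcal{M}/\mathcal{N}}$ on $\mathcal{M}\setminus\mathcal{N}$.
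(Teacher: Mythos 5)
Your proof is correct and follows the paper's own route exactly: the paper's one-line proof ``combine Proposition~\ref{Prop:LipMapQuotients} with \cite{AACD2018}*{Proposition 4.23}'' is precisely your composition of the baseline duality $\mathcal{F}_{p}(\mathcal{M}/\mathcal{N})^{\ast}\cong\mathrm{Lip}_{0}(\mathcal{M}/\mathcal{N},\mathbb{R})$ (that is \cite{AACD2018}*{Proposition 4.23}, which you rederive from the universal property) with the isometry $T(g)=g\circ Q_{\mathcal{M}/\mathcal{N}}$ onto $\mathrm{Lip}_{\mathcal{N}}(\mathcal{M},\mathbb{R})$. Your verification of the pairing formula via $Q_{\mathcal{M}/\mathcal{N}}(x)=x$ for $x\in\mathcal{M}\setminus\mathcal{N}$ is the correct bookkeeping that the paper leaves implicit.
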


\begin{proof}
Just combine Proposition~\ref{Prop:LipMapQuotients} with
\cite{AACD2018}*{Proposition 4.23}.
\end{proof}

Lipschitz free spaces over quotients yield an alternative description
of some Sobolev spaces. Prior to state and prove this result, we write
down an elementary functional lemma that we will need. We omit the proof.

\begin{Lemma}
\label{lem:F}Let $X$ and $Y$ be Banach spaces, let $X_{0}$ be a dense subspace of $X$, 
and let $T\colon X_{0} \to Y$ be a linear map. Suppose there is an
isomorphism $S\colon Y^{*}\to X^{*}$ such that $S(y^{*})(x)=y^{*}(T(x))$
for every $y^{*}\in Y^{*}$ and every $x\in X_{0}$. Then $T$ extends to an
isomorphism from $X$ onto $Y$. Moreover, if $S$ is an isometry, so is
$T$.
\end{Lemma}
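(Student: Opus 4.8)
The plan is to read the hypothesis as the assertion that $S$ is the adjoint of $T$, and then to run the standard duality argument that a bounded operator between Banach spaces is an isomorphism whenever its adjoint is. First I would establish that $T$ is bounded on $X_{0}$: for $x\in X_{0}$ we have $\|T(x)\|=\sup\{|y^{*}(T(x))|\colon \|y^{*}\|\le 1\}=\sup\{|S(y^{*})(x)|\colon \|y^{*}\|\le 1\}$, and since $\|S(y^{*})\|\le\|S\|\,\|y^{*}\|$ this is at most $\|S\|\,\|x\|$. Thus $\|T\|\le\|S\|$ on $X_{0}$, and because $X_{0}$ is dense in $X$ and $Y$ is complete, $T$ extends uniquely to a bounded operator $\overline{T}\colon X\to Y$ with $\|\overline{T}\|\le\|S\|$. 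Both $x\mapsto S(y^{*})(x)$ and $x\mapsto y^{*}(\overline{T}(x))$ are continuous on $X$ and agree on the dense subspace $X_{0}$, so the identity $S(y^{*})(x)=y^{*}(\overline{T}(x))$ now holds for \emph{all} $x\in X$ and all $y^{*}\in Y^{*}$; in other words $S=\overline{T}^{*}$.

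Next I would show $\overline{T}$ is an isomorphism onto $Y$ using only that $S$ is an isomorphism, presenting the two halves by hand rather than quoting the closed range theorem. For the lower bound, given $x\in X$ and $x^{*}\in X^{*}$ with $\|x^{*}\|\le 1$, write $y^{*}=S^{-1}(x^{*})$, so $\|y^{*}\|\le\|S^{-1}\|$ and $|x^{*}(x)|=|S(y^{*})(x)|=|y^{*}(\overline{T}(x))|\le\|S^{-1}\|\,\|\overline{T}(x)\|$; taking the supremum over such $x^{*}$ yields $\|x\|\le\|S^{-1}\|\,\|\overline{T}(x)\|$. Hence $\overline{T}$ is injective with closed range. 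For density of the range, if $y^{*}\in Y^{*}$ vanishes on $\overline{T}(X)$ then $S(y^{*})(x)=y^{*}(\overline{T}(x))=0$ for all $x$, so $S(y^{*})=0$ and, by injectivity of $S$, $y^{*}=0$; by Hahn--Banach the range is dense. A closed dense subspace is the whole space, so $\overline{T}$ is onto, and the lower bound gives $\|\overline{T}^{-1}\|\le\|S^{-1}\|$. Thus $\overline{T}$ is an isomorphism of $X$ onto $Y$.

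Finally, for the isometric statement, I would assume in addition that $S$ is an isometry, i.e. $\|S(y^{*})\|=\|y^{*}\|$ for all $y^{*}$. Being also bijective, $S$ maps the unit ball $B_{Y^{*}}$ \emph{onto} $B_{X^{*}}$: it sends $B_{Y^{*}}$ into $B_{X^{*}}$ because $\|S(y^{*})\|=\|y^{*}\|$, and any $x^{*}\in B_{X^{*}}$ equals $S(S^{-1}x^{*})$ with $\|S^{-1}x^{*}\|=\|x^{*}\|\le 1$. Then for every $x\in X$ we get $\|\overline{T}(x)\|=\sup\{|S(y^{*})(x)|\colon \|y^{*}\|\le 1\}=\sup\{|x^{*}(x)|\colon \|x^{*}\|\le 1\}=\|x\|$, so $\overline{T}$ is an isometry.

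The boundedness estimate and the extension by density are routine; the only place demanding care is the passage from $S=\overline{T}^{*}$ being an isomorphism to $\overline{T}$ being onto, specifically combining the lower bound (closed range) with the Hahn--Banach argument (dense range). For that reason I would spell out both halves explicitly instead of invoking the closed range theorem as a black box.
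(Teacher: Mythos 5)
Your proof is correct and complete. The paper explicitly omits the proof of this lemma as routine, and your argument---identifying $S$ as the adjoint of the continuous extension $\overline{T}$, then combining the lower bound $\|x\|\le\|S^{-1}\|\,\|\overline{T}(x)\|$ (closed range) with the Hahn--Banach annihilator argument (dense range), and correctly using the surjectivity of $S$ for the isometric case---is precisely the standard duality argument the authors intended.
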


Given a bounded open  subset $U$ of $\mathbb{R}^{d}$, following
\cite{M14}*{Definitions 2.2 and 2.3}, we define the Sobolev space $W^{-1,1}(U)$ as the closure of
$\mathcal{C}_{0}(U)^{\ast }$ with respect to the dual norm in $\mathrm{Lip}_{U^{c}}(\mathbb{R}^{d})^{\ast }$.

\begin{Theorem}
\label{thm:sobolev}Let  $U$ be a bounded open 
set in $\mathbb{R}^{d}$,  $d\in \mathbb{N}$. Then $\mathcal{F}(\mathbb{R}^{d}/U^{c})\simeq W^{-1,1}(U)$
isometrically.
\end{Theorem}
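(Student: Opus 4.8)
The plan is to apply the functional Lemma~\ref{lem:F} with the concrete identifications $X=\mathcal{F}(\mathbb{R}^{d}/U^{c})$ and $Y=W^{-1,1}(U)$, exhibiting an isometry between their duals that is induced by a natural map $T$ on a dense subspace. The starting point is that, by Corollary~\ref{Cor:DualityQuotient} applied with $p=q=1$ and $\mathcal{N}=U^{c}$ (which is closed in $\mathbb{R}^{d}$ and contains the base point provided we place $0\in U^{c}$), the dual of $\mathcal{F}(\mathbb{R}^{d}/U^{c})$ is precisely $\mathrm{Lip}_{U^{c}}(\mathbb{R}^{d},\mathbb{R})$, under the pairing $\langle f,\delta(x)\rangle=f(x)$. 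On the other side, the space $W^{-1,1}(U)$ is \emph{by definition} the closure of $\mathcal{C}_{0}(U)^{*}$ inside $\mathrm{Lip}_{U^{c}}(\mathbb{R}^{d})^{*}$; so its dual, via the canonical duality, is again naturally $\mathrm{Lip}_{U^{c}}(\mathbb{R}^{d},\mathbb{R})$. The whole theorem should therefore reduce to matching these two copies of $\mathrm{Lip}_{U^{c}}$ and checking that the identification is isometric.

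First I would fix the dense subspaces. For $X$, the span of the evaluation functionals $\{\delta_{\mathcal{M}/\mathcal{N}}(x):x\in \mathbb{R}^{d}\setminus U^{c}\}$, i.e. finitely supported ``molecules'', is dense in $\mathcal{F}(\mathbb{R}^{d}/U^{c})$ by the very construction of free spaces. For $Y=W^{-1,1}(U)$, the dense subspace is $\mathcal{C}_{0}(U)^{*}$ itself (dense in the closure by definition). I would then define $T$ on the dense molecular subspace of $X$ into $Y$ so that its adjoint $S$ implements the identification of $\mathrm{Lip}_{U^{c}}$ with $\mathrm{Lip}_{U^{c}}$; concretely, $S$ should be the identity (or a canonical isometric identification) on $\mathrm{Lip}_{U^{c}}(\mathbb{R}^{d},\mathbb{R})$, so that the compatibility relation $S(y^{*})(x)=y^{*}(Tx)$ in Lemma~\ref{lem:F} becomes the statement $f(x)=\langle f, Tx\rangle$ for $f\in\mathrm{Lip}_{U^{c}}$. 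This forces the definition of $T$ on molecules and, once $S$ is shown to be an onto isometry, Lemma~\ref{lem:F} upgrades $T$ to an isometric isomorphism of $X$ onto $Y$.

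The main obstacle is the careful verification that $S$ is a well-defined onto isometry between the two dual spaces, and in particular that the two \emph{a priori} different norms on $\mathrm{Lip}_{U^{c}}(\mathbb{R}^{d},\mathbb{R})$ coincide. On the free-space side the dual norm is the Lipschitz constant (this is Corollary~\ref{Cor:DualityQuotient}); on the Sobolev side one must unwind the definition from \cite{M14} and confirm that the dual norm of $W^{-1,1}(U)$, computed as the norm of functionals on the closure of $\mathcal{C}_{0}(U)^{*}$ inside $\mathrm{Lip}_{U^{c}}(\mathbb{R}^{d})^{*}$, is again exactly the Lipschitz seminorm restricted to $\mathrm{Lip}_{U^{c}}$. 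The key analytic point is that taking the closure of $\mathcal{C}_{0}(U)^{*}$ does not shrink the predual's dual below all of $\mathrm{Lip}_{U^{c}}$ and does not alter the norm — i.e. that $\mathcal{C}_{0}(U)^{*}$ is \emph{norming} for $\mathrm{Lip}_{U^{c}}(\mathbb{R}^{d},\mathbb{R})$, so that the bipolar/closure operation recovers the full Lipschitz space isometrically. I expect this to hinge on a density argument showing that smooth compactly supported measures (or the functionals they induce) separate and norm the $U^{c}$-vanishing Lipschitz functions with the correct constant; once that is in place, the chain of isometric identifications $\mathcal{F}(\mathbb{R}^{d}/U^{c})^{*}\cong \mathrm{Lip}_{U^{c}}(\mathbb{R}^{d},\mathbb{R})\cong W^{-1,1}(U)^{*}$ closes, and Lemma~\ref{lem:F} delivers the isometric isomorphism $\mathcal{F}(\mathbb{R}^{d}/U^{c})\simeq W^{-1,1}(U)$.
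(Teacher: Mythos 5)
Your proposal follows the paper's proof in all structural respects: the paper likewise applies Lemma~\ref{lem:F} with $X=\mathcal{F}(\mathbb{R}^{d}/U^{c})$ and $Y=W^{-1,1}(U)$, uses Corollary~\ref{Cor:DualityQuotient} to identify $\mathcal{F}(\mathbb{R}^{d}/U^{c})^{*}$ with $\mathrm{Lip}_{U^{c}}(\mathbb{R}^{d},\mathbb{R})$, defines $T$ on the dense subspace $\mathcal{P}(\mathbb{R}^{d}/U^{c})$ of finitely supported molecules by $\delta_{\mathbb{R}^{d}/U^{c}}(x)\mapsto\delta(x)$ (the Dirac measure at $x\in U$, which indeed lies in $\mathcal{C}_{0}(U)^{*}\subseteq W^{-1,1}(U)$), checks the compatibility $\langle f,\mu\rangle_{0}=\langle f,T(\mu)\rangle_{1}$, and lets Lemma~\ref{lem:F} upgrade $T$ to an onto isometry. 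The one place where you diverge is the Sobolev-side duality: the paper does not prove that $W^{-1,1}(U)^{*}$ is isometric to $\mathrm{Lip}_{U^{c}}(\mathbb{R}^{d})$ with pairing $\langle f,\delta(x)\rangle_{1}=f(x)$; it cites this as known, namely \cite{M14}*{Proposition 8.7}, whereas you propose to derive it yourself from the definition of $W^{-1,1}(U)$ as the closure of $\mathcal{C}_{0}(U)^{*}$ in $\mathrm{Lip}_{U^{c}}(\mathbb{R}^{d})^{*}$.

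On that point your sketch is thinner than you acknowledge. A norming/bipolar argument for $\mathcal{C}_{0}(U)^{*}$ gives you that the canonical map $J\colon\mathrm{Lip}_{U^{c}}(\mathbb{R}^{d})\to W^{-1,1}(U)^{*}$ is \emph{isometric}, but not that it is \emph{onto}: since $W^{-1,1}(U)$ is a closed subspace of the dual space $\mathrm{Lip}_{U^{c}}(\mathbb{R}^{d})^{*}$, its dual is a priori a quotient of the bidual $\mathrm{Lip}_{U^{c}}(\mathbb{R}^{d})^{**}$, and showing that every bounded functional on $W^{-1,1}(U)$ is represented by an actual Lipschitz function requires a separate argument (e.g., recovering $f(x)$ from the action on Diracs and verifying that the span of Diracs is dense in $W^{-1,1}(U)$ in the relevant norm --- a density fact dangerously close to the theorem being proved). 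Both halves of Lemma~\ref{lem:F}'s hypothesis --- that $S$ is an isometry \emph{and} onto --- are needed. So as a self-contained proof there is a gap at exactly this step; but since the needed statement is precisely \cite{M14}*{Proposition 8.7}, replacing your sketched verification by that citation makes your argument coincide with the paper's proof.
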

\begin{proof}
Let
\begin{equation*}
\langle \cdot , \cdot \rangle _{0}\colon \mathrm{Lip}_{U^{c}}(
\mathbb{R}^{d})\times \mathcal{F}(\mathbb{R}^{d}/U^{c}) \to
\mathbb{R}
\end{equation*}
be the dual pairing between $\mathrm{Lip}_{U^{c}}(\mathbb{R}^{d})$ and
$\mathcal{F}(\mathbb{R}^{d}/U^{c})$ provided by
Corollary~\ref{Cor:DualityQuotient}, and let $\delta (x)$ denote the
Dirac measure on $U$ at the point $x\in U$. It is known (see
\cite{M14}*{Proposition 8.7}) that the dual of the Sobolev space
$W^{-1,1}(U)$ is isometric to $\mathrm{Lip}_{U^{c}}(\mathbb{R}^{d})$ and
that, if $\langle \cdot , \cdot \rangle _{1}$ denotes the associated dual
pairing, we have $\langle f, \delta (x)\rangle _{1}=f(x)$ for every
$f\in \mathrm{Lip}_{U^{c}}(\mathbb{R}^{d})$ and every $x\in U$.
Therefore, if we define the linear map
\begin{equation*}
T\colon \mathcal{P}(\mathbb{R}^{d}/U^{c})\to W^{-1,1}(U),\quad
\delta _{\mathbb{R}^{d}/U^{c}}(x)\mapsto \delta (x),
\end{equation*}
we have $\langle f, \mu \rangle _{0}=\langle f, T(\mu )\rangle _{1}$. By
Lemma~\ref{lem:F}, $T$ extends to an isometry from $\mathcal{F}(
\mathbb{R}^{d}/U^{c})$ onto $W^{-1,1}(U)$.
\end{proof}

\begin{Remark}
After consulting with experts on the field, we found out that the
literature is not unified in regards to the right definition of the Sobolev
space $W^{-1,1}$. We used the definition from \cite{M14} because, in
this case, $W^{-1,1}$ is a canonical predual of $\mathrm{Lip}_{U^{c}}(
\mathbb{R}^{d},\mathbb{R})$. In general, the proof above shows that any
``natural predual'' of the space $\mathrm{Lip}_{U^{c}}(\mathbb{R}^{d},
\mathbb{R})$ is isometric to $\mathcal{F}(\mathbb{R}^{d}/U^{c})$.
Notice that by a recent result of Weaver, Lipschitz free spaces over bounded metric spaces have strongly unique preduals
(see \cite{Weaver2019}). Hence, when combined with Proposition~\ref{Prop:LipMapQuotients}, this gives an alternative proof to Theorem~\ref{thm:sobolev}.
\end{Remark}

\subsection{Lipschitz retractions}\label{Sect2.3}
It is known that if $\mathcal{N}$ is a Lipschitz retract of
$\mathcal{M}$ then the space $\mathcal{F}_{p}(\mathcal{N})$ is a
complemented subspace of $\mathcal{F}_{p}(\mathcal{M})$ via the
canonical linear map from $\mathcal{F}_{p}(\mathcal{N})$ into
$\mathcal{F}_{p}(\mathcal{M})$ (see \cite{AACD2018}*{Lemma 4.19}). The
following result identifies a complement of $\mathcal{F}_{p}(
\mathcal{N})$ in $\mathcal{F}_{p}(\mathcal{M})$.

\begin{Theorem}
\label{thm:retr}Let $(\mathcal{M},d,0)$ be a pointed $p$-metric space, $0<p\leq 1$, and
$\mathcal{N}\subseteq \mathcal{M}$ be a Lipschitz retract. Then
\begin{equation*}
\mathcal{F}_{p}(\mathcal{M})\simeq \mathcal{F}_{p}(\mathcal{N})\oplus
\mathcal{F}_{p}(\mathcal{M}/\mathcal{N}).
\end{equation*}
\end{Theorem}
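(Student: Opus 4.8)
The plan is to realize the decomposition through the Lipschitz retraction together with the quotient map introduced in Section~\ref{Sect2.2}. Fix a Lipschitz retraction $r\colon\mathcal{M}\to\mathcal{N}$ and recall that the quotient map $Q:=Q_{\mathcal{M}/\mathcal{N}}\colon\mathcal{M}\to\mathcal{M}/\mathcal{N}$ is $1$-Lipschitz (directly from the definition of $d_{\mathcal{M}/\mathcal{N}}$, using $d(x,\mathcal{N})\le d(x,y)$ for $y\in\mathcal{N}$) and maps $0$ to $0$, so it induces a linear map $L_{Q}\colon\mathcal{F}_{p}(\mathcal{M})\to\mathcal{F}_{p}(\mathcal{M}/\mathcal{N})$ with $\Vert L_{Q}\Vert\le 1$. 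Writing $\jmath\colon\mathcal{N}\hookrightarrow\mathcal{M}$ for the inclusion, the identity $r\circ\jmath=\mathrm{id}_{\mathcal{N}}$ gives $L_{r}\circ L_{\jmath}=\mathrm{id}_{\mathcal{F}_{p}(\mathcal{N})}$, so $P:=L_{\jmath}\circ L_{r}$ is a bounded linear projection on $\mathcal{F}_{p}(\mathcal{M})$ whose range is isomorphic to $\mathcal{F}_{p}(\mathcal{N})$; this is the content of \cite{AACD2018}*{Lemma~4.19}. It remains to identify the complementary summand $\ker P=\ker L_{r}$ with $\mathcal{F}_{p}(\mathcal{M}/\mathcal{N})$.

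To produce an inverse on the complement I would build a section at the level of the metric spaces. Define $s\colon\mathcal{M}/\mathcal{N}\to\mathcal{F}_{p}(\mathcal{M})$ by $s(0)=0$ and $s(x)=\delta_{\mathcal{M}}(x)-\delta_{\mathcal{M}}(r(x))$ for $x\in\mathcal{M}\setminus\mathcal{N}$. The naive section $x\mapsto\delta_{\mathcal{M}}(x)$ fails to be Lipschitz for the quotient $p$-metric precisely because $d_{\mathcal{M}/\mathcal{N}}(x,y)$ is the minimum of $d(x,y)$ and $(d^{p}(x,\mathcal{N})+d^{p}(y,\mathcal{N}))^{1/p}$; subtracting the retracted point is what repairs this. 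I expect the verification that $s$ is Lipschitz to be the main obstacle, and it is the heart of the argument. Using $p$-subadditivity of the quasinorm one bounds $\Vert s(x)-s(y)\Vert^{p}$ on the one hand by $d^{p}(x,y)+d^{p}(r(x),r(y))\le(1+\mathrm{Lip}(r)^{p})\,d^{p}(x,y)$, and on the other by $\Vert s(x)\Vert^{p}+\Vert s(y)\Vert^{p}$. Since $\Vert s(x)\Vert=d(x,r(x))$, the second bound is controlled once we show $d(x,r(x))\le(1+\mathrm{Lip}(r)^{p})^{1/p}\,d(x,\mathcal{N})$, which follows by choosing $z\in\mathcal{N}$ nearly realizing $d(x,\mathcal{N})$ and applying the $p$-triangle inequality to $x$, $z=r(z)$, and $r(x)$. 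Taking the smaller of the two estimates matches exactly the minimum defining $d_{\mathcal{M}/\mathcal{N}}$, so $\mathrm{Lip}(s)\le(1+\mathrm{Lip}(r)^{p})^{1/p}$ and $s$ induces a bounded linear map $L_{s}\colon\mathcal{F}_{p}(\mathcal{M}/\mathcal{N})\to\mathcal{F}_{p}(\mathcal{M})$.

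Finally I would check two operator identities on the dense linear span of the Dirac masses and extend them by linearity and continuity. From $L_{r}(\delta_{\mathcal{M}}(x))=\delta_{\mathcal{N}}(r(x))$ and $r\circ r=r$ one gets $L_{r}\circ L_{s}=0$, so $L_{s}$ maps into $\ker L_{r}$. Evaluating $L_{Q}\circ L_{s}$ on $\delta_{\mathcal{M}/\mathcal{N}}(x)$ gives back $\delta_{\mathcal{M}/\mathcal{N}}(x)$, because $Q(x)=x$ while $Q(r(x))=0$, whence $L_{Q}\circ L_{s}=\mathrm{id}_{\mathcal{F}_{p}(\mathcal{M}/\mathcal{N})}$. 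Evaluating $L_{s}\circ L_{Q}$ on $\delta_{\mathcal{M}}(x)$ yields $\delta_{\mathcal{M}}(x)-\delta_{\mathcal{M}}(r(x))$ when $x\notin\mathcal{N}$ and $0$ when $x\in\mathcal{N}$, which in both cases equals $(\mathrm{id}-P)(\delta_{\mathcal{M}}(x))$; thus $L_{s}\circ L_{Q}=\mathrm{id}-P$. These identities show that $\mathrm{id}-P$ is the complementary projection, that its range is $\ker L_{r}$, and that $L_{s}$ and $L_{Q}$ restrict to mutually inverse isomorphisms between $\mathcal{F}_{p}(\mathcal{M}/\mathcal{N})$ and $\ker L_{r}$. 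Therefore $\mathcal{F}_{p}(\mathcal{M})=\mathrm{range}(P)\oplus\ker L_{r}\simeq\mathcal{F}_{p}(\mathcal{N})\oplus\mathcal{F}_{p}(\mathcal{M}/\mathcal{N})$, as claimed.
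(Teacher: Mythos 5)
Your proposal is correct, and it takes a genuinely different (if closely related) route from the paper. The paper never works with projections on $\mathcal{F}_{p}(\mathcal{M})$ directly: it builds a Lipschitz map $f\colon\mathcal{M}\to\mathcal{F}_{p}(\mathcal{N}\maltese\mathcal{M}/\mathcal{N})$, $f(x)=\delta_{\maltese}(r(x),0)+\delta_{\maltese}(0,x)$, and a map $g$ in the opposite direction, checks that the induced operators $T$ and $S$ are mutually inverse, and then splits $\mathcal{F}_{p}(\mathcal{N}\maltese\mathcal{M}/\mathcal{N})$ by Lemma~\ref{lem:2}. You instead decompose $\mathcal{F}_{p}(\mathcal{M})$ in place: the projection $P=L_{\jmath}\circ L_{r}$ from \cite{AACD2018}*{Lemma 4.19}, the trivially $1$-Lipschitz quotient map $Q$, and the section $s(x)=\delta_{\mathcal{M}}(x)-\delta_{\mathcal{M}}(r(x))$, with the identities $L_{Q}\circ L_{s}=\operatorname{\mathrm{Id}}$ and $L_{s}\circ L_{Q}=\operatorname{\mathrm{Id}}-P$ verified on Dirac masses. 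The mathematical heart is the same in both proofs --- your $s$ is exactly the second coordinate of the paper's $g$, and your Lipschitz estimate $\operatorname{\mathrm{Lip}}(s)\le(1+L^{p})^{1/p}$, including the key bound $d(x,r(x))\le(1+L^{p})^{1/p}d(x,\mathcal{N})$, reproduces the paper's computation with the same constant --- but your packaging is more economical and more informative: you avoid estimating the paper's map $f$ altogether (the known projection $P$ plus the $1$-Lipschitz $Q$ do that work), you bypass the $\maltese$-sum and Lemma~\ref{lem:2} entirely, and you obtain the sharper conclusion that the complement of $\mathcal{F}_{p}(\mathcal{N})$ is precisely $\ker L_{r}$, with $L_{s}$ and $L_{Q}$ as explicit mutually inverse isomorphisms between $\ker L_{r}$ and $\mathcal{F}_{p}(\mathcal{M}/\mathcal{N})$; this makes explicit the kernel description that the paper's introduction advertises as the point of Section~\ref{Sect2.3}. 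Two small hypotheses you use silently, as does the paper: $\mathcal{N}$ is closed (automatic, being the fixed-point set of the continuous map $r$, since $d^{p}$ is a metric inducing the topology), and the base point lies in $\mathcal{N}$ (harmless, as $\mathcal{F}_{p}(\mathcal{M})$ does not depend on the choice of base point).
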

\begin{proof}
Let $r\colon \mathcal{M}\to \mathcal{M}$ be a Lipschitz retraction with
$r(\mathcal{M})=\mathcal{N}$ and set $L=\mathrm{Lip}(r)$. Let
$f\colon \mathcal{M}\rightarrow \mathcal{F}_{p}(\mathcal{N}\maltese
\mathcal{M}/\mathcal{N})$ be defined as
\begin{equation*}
f(x)=
\begin{cases}
\delta _{\maltese }(r(x),0) +\delta _{\maltese }(0,x)
& \text{ if } x
\in \mathcal{M}\setminus \mathcal{N},
\\
\delta _{\maltese }(x,0)
& \text{ if } x\in \mathcal{N},
\end{cases}
\end{equation*}
where $\delta _{\maltese }$ denotes the natural isometric embedding of
$\mathcal{N}\maltese \mathcal{M}/\mathcal{N}$ into $\mathcal{F}_{p}(
\mathcal{N}\maltese \mathcal{M}/\mathcal{N})$. Since $r(x)=x$ for
$x\in \mathcal{N}$ we obtain
\begin{equation*}
\Vert f(x)-f(y)\Vert ^{p}\le
\begin{cases}
A^{p}(x,y)+ B^{p}(x,y)
& \text{ if } x,y \in \mathcal{M}\setminus
\mathcal{N},
\\
A^{p}(x,y)
& \text{ if } x,y \in \mathcal{N},
\\
A^{p}(x,y) +B^{p}(x,0)
& \text{ if } x \in \mathcal{M}\setminus
\mathcal{N}\text{ and } y\in \mathcal{N},
\\
\end{cases}
\end{equation*}
where $A(x,y)=d(r(x),r(y))$ and
$B(x,y)=d_{\mathcal{M}/\mathcal{N}}(x,y)$. Since, for all $x$,
$y\in \mathcal{M}$,
\begin{equation*}
A(x,y)\le L \, d(x,y),\quad B(x,y)\le d(x,y),
\end{equation*}
and, if $y\in \mathcal{N}$,
\begin{equation*}
B(x,0)\le d(x,\mathcal{N})\le d(x,y),
\end{equation*}
the function $f$ is $(L^{p}+1)^{1/p}$-Lipschitz. So, by
\cite{AACD2018}*{Theorem 4.5}, there is a linear operator $T\colon
\mathcal{F}_{p}(\mathcal{M})\rightarrow \mathcal{F}_{p}(\mathcal{N}
\maltese \mathcal{M}/\mathcal{N})$ such that
$\Vert T\Vert \le (L^{p}+1)^{1/p}$ and $T\circ \delta _{\mathcal{M}}=f$.

Conversely, we define a map $g\colon \mathcal{N}\maltese \mathcal{M}/
\mathcal{N}\rightarrow \mathcal{F}_{p}(\mathcal{M})$ as
\begin{equation*}
g(x_{1},x_{2})=
\begin{cases}
\delta _{\mathcal{M}}(x_{1})
& \text{ if } x_{2}=0,
\\
\delta _{\mathcal{M}}(x_{2})-\delta _{\mathcal{M}}(r(x_{2}))
& \text{ if
} x_{2}\neq0.
\end{cases}
\end{equation*}
Let $(x_{1},x_{2})$, $(y_{1},y_{2})\in \mathcal{N}\maltese
\mathcal{M}/\mathcal{N}$. Since $y_{1}=0$ if $y_{2}\neq0$, we deduce
that $\Vert g(x_{1},x_{2}) -g(y_{1},y_{2})\Vert ^{p}$ is bounded above
by
\begin{equation*}
\begin{cases}
d^{p}(x_{1},y_{1})
& \text{ if } x_{2}=y_{2}=0,
\\
d^{p}(x_{2},y_{2})+A^{p}(x_{2},y_{2})
& \text{ if } x_{2},y_{2}
\neq0,
\\
d^{p}(x_{1},y_{1})+d^{p}(x_{2},y_{2}) + A^{p}(x_{2},y_{2})
& \text{ if
} x_{2}=0, y_{2}\neq0.
\end{cases}
\end{equation*}
Thus, for all $(x_{1},y_{1})$ and $(x_{2},y_{2})\in \mathcal{N}\maltese
\mathcal{M}/\mathcal{N}$ we have
\begin{align*}
\Vert g(x_{1},x_{2}) -g(y_{1},y_{2})\Vert ^{p}
&\le (1+L^{p}) \big (d
^{p}(x_{1},y_{1}) + d^{p}(x_{2},y_{2})\big )
\\
&\le (1+L^{p}) \big (d^{p}(x_{1},y_{1}) + d_{\mathcal{M}/\mathcal{N}}
^{p}(x_{2},y_{2})\big ),
\end{align*}
and so there is a linear map $S\colon \mathcal{F}_{p}(\mathcal{N}\maltese
\mathcal{M}/\mathcal{N})\rightarrow \mathcal{F}_{p}(\mathcal{M})$ such
that $S\circ \delta _{\maltese }=g$ and
$\Vert S\Vert \le (1+L^{p})^{1/p}$.

Finally, it is easy to verify that $S\circ T\circ \delta _{\mathcal{M}}=
\delta _{\mathcal{M}}$ and $S\circ T\circ
\delta _{\mathcal{N}\maltese \mathcal{M}/\mathcal{N}} =
\delta _{\mathcal{N}\maltese \mathcal{M}/\mathcal{N}}$, hence
$S\circ T=\operatorname{\mathrm{{Id}}}_{\mathcal{F}_{p}(\mathcal{M})}$
and $S\circ T=\operatorname{\mathrm{{Id}}}_{\mathcal{F}_{p}(
\mathcal{N}\maltese \mathcal{M}/\mathcal{N})}$.

Summarizing and appealing to Lemma~\ref{lem:2} we obtain
\begin{equation*}
\mathcal{F}_{p}(\mathcal{M})\simeq \mathcal{F}_{p}(\mathcal{N}\maltese
\mathcal{M}/\mathcal{N}) \simeq \mathcal{F}_{p}(\mathcal{N})\oplus
\mathcal{F}_{p}(\mathcal{M}/\mathcal{N}).\qedhere
\end{equation*}
\end{proof}

Let us mention an application that  will be used later. Recall that a
subset $\mathcal{N}$ of a quasimetric space $(\mathcal{M},d)$ is said to
be a \emph{net} if
\begin{equation*}
\inf \{d(x,y) \colon x,y\in \mathcal{N}, x\not =y\}>0\quad \text{and}
\quad \sup _{x\in \mathcal{M}} d(x,\mathcal{N})<\infty .
\end{equation*}

\begin{Corollary}
\label{cor:nets}
Suppose $\mathcal{M}$ is a uniformly separated $p$-metric space,
$0<p\leq 1$, and that $\mathcal{N}\subseteq \mathcal{M}$ is a net. Then
\begin{equation*}
\mathcal{F}_{p}(\mathcal{M})\simeq \mathcal{F}_{p}(\mathcal{N})\oplus
\ell _{p}(|\mathcal{M}\setminus \mathcal{N}|).
\end{equation*}
\end{Corollary}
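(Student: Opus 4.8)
The plan is to realize $\mathcal{N}$ as a Lipschitz retract of $\mathcal{M}$, apply the decomposition of Theorem~\ref{thm:retr}, and then identify $\mathcal{F}_p(\mathcal{M}/\mathcal{N})$ with $\ell_p(|\mathcal{M}\setminus\mathcal{N}|)$ through the $\ell_p$-sum criterion of Lemma~\ref{lem:isoWithELLPSum}. Both identifications rest on the same elementary mechanism: uniform separation supplies a lower bound $d(x,y)\ge\delta$ for distinct points, while the net condition supplies a uniform upper bound $d(x,\mathcal{N})\le R$, and playing one against the other controls every Lipschitz constant involved. Write $\delta:=\inf\{d(x,y):x\neq y\}>0$ and $R:=\sup_{x\in\mathcal{M}}d(x,\mathcal{N})<\infty$. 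Since distinct points of $\mathcal{M}$ lie at distance at least $\delta$, every subset of $\mathcal{M}$ is closed, so in particular $\mathcal{N}$ is closed; and we may assume $0\in\mathcal{N}$, as adjoining the base point affects neither the net property nor $\mathcal{F}_p(\mathcal{N})$. If $R=0$ then $\mathcal{M}=\mathcal{N}$ and the statement is trivial, so assume $R>0$.

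To build the retraction, set $r(x)=x$ for $x\in\mathcal{N}$ and, for $x\notin\mathcal{N}$, choose $r(x)\in\mathcal{N}$ with $d(x,r(x))\le R+1$; then $r(0)=0$ and $r(\mathcal{M})=\mathcal{N}$. If $x\neq y$ and $r(x)\neq r(y)$, two applications of the $p$-triangle law give $d^p(r(x),r(y))\le d^p(r(x),x)+d^p(x,y)+d^p(y,r(y))\le 2(R+1)^p+d^p(x,y)$, and since $d(x,y)\ge\delta$ this is at most $\bigl(1+2((R+1)/\delta)^p\bigr)d^p(x,y)$. Thus $r$ is a Lipschitz retraction onto $\mathcal{N}$, and Theorem~\ref{thm:retr} yields $\mathcal{F}_p(\mathcal{M})\simeq\mathcal{F}_p(\mathcal{N})\oplus\mathcal{F}_p(\mathcal{M}/\mathcal{N})$.

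It remains to show $\mathcal{F}_p(\mathcal{M}/\mathcal{N})\simeq\ell_p(|\mathcal{M}\setminus\mathcal{N}|)$. The nonzero points of $\mathcal{M}/\mathcal{N}$ are exactly those of $\mathcal{M}\setminus\mathcal{N}$, and because $0\in\mathcal{N}$ one has $d_{\mathcal{M}/\mathcal{N}}(x,0)=d(x,\mathcal{N})$. I would apply Lemma~\ref{lem:isoWithELLPSum} to $\mathcal{M}/\mathcal{N}$ with the partition of its nonzero points into singletons, i.e.\ with the two-point spaces $\{0,x\}$; since $\mathcal{F}_p(\{0,x\})\simeq\mathbb{R}$, the resulting $\ell_p$-sum is $\ell_p(|\mathcal{M}\setminus\mathcal{N}|)$. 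The one hypothesis to verify is that, for distinct $x,y\in\mathcal{M}\setminus\mathcal{N}$,
\begin{equation*}
K^p\,d_{\mathcal{M}/\mathcal{N}}^p(x,y)\ge d^p(x,\mathcal{N})+d^p(y,\mathcal{N})
\end{equation*}
for a uniform $K$. If the minimum defining $d_{\mathcal{M}/\mathcal{N}}(x,y)$ is attained by $(d^p(x,\mathcal{N})+d^p(y,\mathcal{N}))^{1/p}$, the inequality holds with $K=1$; otherwise $d_{\mathcal{M}/\mathcal{N}}(x,y)=d(x,y)\ge\delta$ while the right-hand side is at most $2R^p$, so it holds with $K^p=2(R/\delta)^p$. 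Taking $K=\max\{1,2^{1/p}R/\delta\}$ and combining with the previous paragraph gives $\mathcal{F}_p(\mathcal{M})\simeq\mathcal{F}_p(\mathcal{N})\oplus\ell_p(|\mathcal{M}\setminus\mathcal{N}|)$.

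The only place where the hypotheses are genuinely used—and hence the main obstacle—is this last verification together with the Lipschitz estimate for $r$. In each of them the case split forced by the $\min$ in the quotient metric must be treated, but both cases reduce to the interplay of the bounds $d(x,y)\ge\delta$ and $d(\cdot,\mathcal{N})\le R$, so no genuine difficulty arises.
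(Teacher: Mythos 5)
Your proposal is correct, and its first half is exactly the paper's argument: you build a Lipschitz retraction onto $\mathcal{N}$ by sending each $x\notin\mathcal{N}$ to a near-nearest point of $\mathcal{N}$ (the paper takes $d(x,z(x))\le 2d(x,\mathcal{N})$, you take $d(x,r(x))\le R+1$; both give a Lipschitz bound of the form $1+c\,(R/\delta)^p$ from the same separation-versus-net interplay), and then you invoke Theorem~\ref{thm:retr} to get $\mathcal{F}_p(\mathcal{M})\simeq\mathcal{F}_p(\mathcal{N})\oplus\mathcal{F}_p(\mathcal{M}/\mathcal{N})$. You diverge only in the final identification $\mathcal{F}_p(\mathcal{M}/\mathcal{N})\simeq\ell_p(|\mathcal{M}\setminus\mathcal{N}|)$: the paper observes that $\mathcal{M}/\mathcal{N}$ is uniformly separated and bounded and cites the external result \cite{AACD2018}*{Theorem 4.14}, whereas you re-derive this special case internally by applying Lemma~\ref{lem:isoWithELLPSum} to the singleton partition $\{0,x\}$, $x\in\mathcal{M}\setminus\mathcal{N}$, checking \eqref{eq:almostEllPSum} with the case split forced by the $\min$ in $d_{\mathcal{M}/\mathcal{N}}$ (the key facts $d_{\mathcal{M}/\mathcal{N}}(x,0)=d(x,\mathcal{N})\le R$ and $d(x,y)\ge\delta$ are used correctly, giving $K=\max\{1,2^{1/p}R/\delta\}$). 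This buys a self-contained proof with an explicit constant in place of a citation; the paper's route is shorter and delegates the bookkeeping to the cited theorem, but the content is the same. One small point worth making explicit in your write-up: Lemma~\ref{lem:isoWithELLPSum} requires the ambient space to be a pointed $p$-metric space, so you are implicitly using that $d_{\mathcal{M}/\mathcal{N}}$ satisfies the $p$-triangle law, which the paper establishes in Section~\ref{Sect2.2}; with that noted, the verification is complete.
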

\begin{proof}
Given $x\in \mathcal{M}\setminus \mathcal{N}$, pick $z(x)\in
\mathcal{N}$ such that $d(x,z(x))\le 2 d(x,\mathcal{N})$. The map
\begin{equation*}
x\mapsto
\begin{cases}
x
& \text{ if } x\in \mathcal{N},
\\
z(x)
& \text{ if } x\notin \mathcal{N},
\end{cases}
\end{equation*}
is a Lipschitz retract of $\mathcal{M}$ onto $\mathcal{N}$. Indeed, if
$x$ and $y$ are different points in $\mathcal{M}\setminus \mathcal{N}$,
\begin{align*}
d^{p}(z(x),z(y))
&\le d^{p}(x,y) + 2^{p} (d^{p}(x,\mathcal{N})+d^{p}(y,
\mathcal{N}))\\
& \le \left (1+2^{p+1} \frac{R^{p}}{\varepsilon ^{p}}\right )d
^{p}(x,y),
\end{align*}
where $R=\sup _{x\in \mathcal{M}} d(x,\mathcal{N})$ and $\varepsilon =
\inf \{d(x,y) \colon x,y\in \mathcal{M}, x\not =y\}>0$. By
Theorem~\ref{thm:retr},
\begin{equation*}
\mathcal{F}_{p}(\mathcal{M})\simeq \mathcal{F}_{p}(\mathcal{N})\oplus
\mathcal{F}_{p}(\mathcal{M}/\mathcal{N}).
\end{equation*}
Since $\mathcal{M}/\mathcal{N}$ is uniformly separated and bounded,
\cite{AACD2018}*{Theorem 4.14} yields $\mathcal{F}_{p}(\mathcal{M}/
\mathcal{N})\simeq \ell _{p}(|\mathcal{M}\setminus \mathcal{N}|)$.
\end{proof}

\section{Complementability of $\ell _{p}$ in $\mathcal{F}_{p}(\mathcal{M})$ for $0<p\le 1$ and Lipschitz free $p$-spaces over nets}\label{Section:Comp}
\noindent

Our main result on complementability of $\ell _{p}$ in Lipschitz free
$p$-spaces is the following generalization of \cite{CDW2016}*{Theorem~1.1}
and \cite{HN17}*{Proposition 3}. We will obtain
Theorem~\ref{thm:mainComplementedEllp} from
Theorems~\ref{thm:discretePoints} and \ref{thm:ellPComplementedMetricCase} below. Given a topological space
$\mathcal{M}$, $\operatorname{dens} \mathcal{M}$ will
denote the density character of $\mathcal{M}$, i.e., the minimal
cardinality of a dense subset of $\mathcal{M}$.

\begin{Theorem}
\label{thm:mainComplementedEllp}
Let $p\in (0,1]$. Suppose that $(\mathcal{M},d)$ is either
\begin{enumerate}[(a)]\item[(a)] a metric space, or
\item[(b)] a $p$-metric space containing $
\operatorname{dens} \mathcal{M}$-many isolated points.
\end{enumerate}
Then for every $C>2^{1/p}$, $\ell _{p}(\operatorname{dens} \mathcal{M})$ is $C$-complemented in $\mathcal{F}_{p}(
\mathcal{M})$.
\end{Theorem}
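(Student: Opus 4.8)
The plan is to treat the two hypotheses separately and, in each case, to manufacture an explicit pair of maps exhibiting $\ell_p(\kappa)$ (write $\kappa=\operatorname{dens}\mathcal{M}$) as a complemented subspace with projection constant as close to $2^{1/p}$ as desired. In both cases I will build a base-point preserving map $F\colon\mathcal{M}\to\ell_p(\kappa)$ that is $2^{1/p}$-Lipschitz, so that by the universal property of the free $p$-space its linearization $L_F\colon\mathcal{F}_p(\mathcal{M})\to\ell_p(\kappa)$ satisfies $\Vert L_F\Vert=\mathrm{Lip}(F)\le 2^{1/p}$; and, alongside it, a norm-one family of molecules $(m_\alpha)_{\alpha<\kappa}$ in $\mathcal{F}_p(\mathcal{M})$ with $L_F(m_\alpha)=e_\alpha$. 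Because $\mathcal{F}_p(\mathcal{M})$ is $p$-normed, the assignment $J(e_\alpha)=m_\alpha$ extends to an operator $J\colon\ell_p(\kappa)\to\mathcal{F}_p(\mathcal{M})$ with $\Vert J\Vert\le 1$, and from $L_F\circ J=\mathrm{Id}$ together with $\Vert L_F\Vert\le 2^{1/p}$ one gets $\Vert Jv\Vert\ge 2^{-1/p}\Vert v\Vert$, so $J$ is an isomorphic embedding and $J(\ell_p(\kappa))\simeq\ell_p(\kappa)$. Then $P:=J\circ L_F$ is a projection onto this copy of $\ell_p(\kappa)$ with $\Vert P\Vert\le 2^{1/p}<C$, which is exactly what is claimed.

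For case (b) the maps $F$ and $m_\alpha$ come from isolated points. I would fix $\kappa$-many isolated points $(a_\alpha)$, none equal to the base point, let $r_\alpha=d(a_\alpha,\mathcal{M}\setminus\{a_\alpha\})>0$ be the isolation radius, set $F(a_\alpha)=r_\alpha e_\alpha$, and set $F\equiv 0$ off $\{a_\alpha\}$. The Lipschitz estimate is immediate and produces the constant $2^{1/p}$: for two selected points $\Vert F(a_\alpha)-F(a_\beta)\Vert_p^p=r_\alpha^p+r_\beta^p\le 2\max(r_\alpha^p,r_\beta^p)\le 2\,d^p(a_\alpha,a_\beta)$, while $\Vert F(a_\alpha)-F(y)\Vert_p=r_\alpha\le d(a_\alpha,y)$ for any $y\neq a_\alpha$. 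For the molecules I would pick, for each $\alpha$, a witness $b_\alpha\neq a_\alpha$ with $d(a_\alpha,b_\alpha)\le 2r_\alpha$ and put $m_\alpha=(\delta_{\mathcal{M}}(a_\alpha)-\delta_{\mathcal{M}}(b_\alpha))/d(a_\alpha,b_\alpha)$, so that $L_F(m_\alpha)=\frac{r_\alpha}{d(a_\alpha,b_\alpha)}e_\alpha$ is a fixed multiple of $e_\alpha$ with coefficient in $[\tfrac12,1]$; after rescaling these give the required right inverse.

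For case (a) the genuine triangle inequality is available, so I would replace indicators by bump functions. A separation argument yields $\kappa$-many points $(x_\alpha)$ and witnesses $y_\alpha$ such that, putting $\rho_\alpha=d(x_\alpha,y_\alpha)$, the balls $B(x_\alpha,\rho_\alpha)$ are pairwise disjoint and no $y_\beta$ lies in any $B(x_\alpha,\rho_\alpha)$. The functions $f_\alpha(z)=\max\{0,\rho_\alpha-d(z,x_\alpha)\}$ are $1$-Lipschitz with disjoint supports, so $F(z)=(f_\alpha(z))_\alpha$ is $2^{1/p}$-Lipschitz into $\ell_p(\kappa)$ (between any two points at most two coordinates change, each by at most $d(\cdot,\cdot)$); with $m_\alpha=(\delta_{\mathcal{M}}(x_\alpha)-\delta_{\mathcal{M}}(y_\alpha))/\rho_\alpha$ one computes $L_F(m_\alpha)=e_\alpha$ on the nose, closing the argument with $\Vert P\Vert\le 2^{1/p}$. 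It is precisely here that genuine metricity is used: for $p<1$ distance functions are only H\"older, so bump functions are unavailable and one is forced to the isolated-point device of (b), which explains the dichotomy in the hypotheses.

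The main obstacle in each case is the \emph{geometric selection}, not the functional-analytic wrapper. In (a) it is the construction of a pairwise disjoint family of balls of full cardinality $\kappa$ together with external witnesses; this rests on extracting, at a suitable scale, a $\kappa$-sized separated set from $\mathcal{M}$, and on handling simultaneously the presence of an accumulation point and the uniformly discrete case. In (b) it is keeping the right inverse bounded: isolated points may sit arbitrarily far from the base point relative to their isolation radius, so one must choose the witnesses $b_\alpha$ (and possibly thin out the family) so that the molecules $m_\alpha$ stay sufficiently ``disjoint'' for $J$ to remain bounded on all of $\ell_p(\kappa)$. I expect these two selection steps to carry the real weight of the proof, with everything downstream being the uniform packaging described in the first paragraph.
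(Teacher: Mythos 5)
Your proposal is, in substance, the paper's own proof. Your ``uniform packaging'' paragraph is exactly Lemma~\ref{lemma:condition2} (molecules $b_\gamma=(\delta(x_\gamma)-\delta(y_\gamma))/d(x_\gamma,y_\gamma)$, a weighted sum of disjointly supported Lipschitz functions into $\ell_p(\Gamma)$, the two-active-coordinates estimate giving the factor $2^{1/p}$); your case (b) is Theorem~\ref{thm:discretePoints} (indicators weighted by the isolation radius, witnesses chosen within $t\,r_\alpha$ of each isolated point); and your case (a) is Theorem~\ref{thm:ellPComplementedMetricCase} (tent functions $\max\{\rho_\alpha-d(\cdot,x_\alpha),0\}$ supported on pairwise disjoint balls, with $f_\alpha(y_\alpha)=0$ automatic because the support is the \emph{open} ball of radius exactly $\rho_\alpha$). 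The geometric selection you defer in (a) is resolved in the paper by citing the standard fact that any metric space contains $\operatorname{dens}\mathcal{M}$-many pairwise disjoint balls, and by first disposing of the case of $\operatorname{dens}\mathcal{M}$-many isolated points via case (b), so that the remaining ball centers can be taken non-isolated and hence admit witnesses $y_\alpha\in B(x_\alpha,r_\alpha)\setminus\{x_\alpha\}$.

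Two caveats, both repairable. First, a quantitative slip in case (b): with witnesses satisfying only $d(a_\alpha,b_\alpha)\le 2r_\alpha$, the rescaled right inverse has $\Vert J\Vert\le 2$, so your projection bound is $2^{1+1/p}$, not $2^{1/p}$, and you miss every $C\in(2^{1/p},2^{1+1/p}]$. The fix is what the paper does: take $d(a_\alpha,b_\alpha)\le t\,r_\alpha$ with $t>1$ arbitrary (possible since $r_\alpha$ is an infimum), getting the constant $2^{1/p}t<C$; your opening phrase ``as close to $2^{1/p}$ as desired'' shows you intended this, but the written constant contradicts it. Second, the witness-avoidance problem you flag in (b) is genuinely unavoidable --- for $\mathcal{M}=\mathbb{N}$ with $t<2$ every admissible witness for an integer is another isolated point --- and it does require a thinning argument (e.g., the functional digraph $\alpha\mapsto b_\alpha$ has out-degree one, hence is $3$-colorable, so some color class of full cardinality $\kappa$ meets no witness of its own members); you name the issue without executing it, though it is only fair to note that the paper's proof of Theorem~\ref{thm:discretePoints} is silent on this very point, since Lemma~\ref{lemma:condition2} demands $f_{\gamma_1}(y_{\gamma_2})=0$ for \emph{all} pairs. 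One small correction to your closing heuristic: it is not true that bump functions are categorically unavailable when $p<1$; on uniformly separated $p$-metric spaces the maps $x\mapsto d^p(x,y)$ are Lipschitz with constant $r^{p-1}$, and the paper exploits exactly such bumps in Lemma~\ref{lem:condition3}(b) --- the dichotomy in the hypotheses of the present theorem reflects what is needed to get the \emph{uniform} constant $2^{1/p}$, not a total absence of Lipschitz functions.
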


We do not attempt to achieve an optimal quantitative estimate here, but
what we consider interesting is that it does not depend on the space
$\mathcal{M}$. Let us note that this seems to be a new result even for
$p=1$ and nonseparable metric spaces. For separable metric spaces and
$p=1$, quantitative estimates other than $C=1$ are not important because,
by \cite{DRT}, whenever a Banach space $X$ has a complemented subspace
isomorphic to $\ell _{1}$, then for each $\varepsilon >0$ it has a
$(1+\varepsilon )$-complemented subspace $(1+\varepsilon )$--isomorphic
to $\ell _{1}$.

Sometimes it is even possible to have a more precise information about
the complemented copy of $\ell _{p}$ in the space $\mathcal{F}_{p}(
\mathcal{M})$ at the cost of losing the quantitative estimate. For
instance, if $\mathcal{M}$ is a uniformly separated infinite $p$-metric
space containing a net $\mathcal{N}$ with $|\mathcal{M}\setminus
\mathcal{N}|=|\mathcal{M}|$, the proof of Corollary~\ref{cor:nets}
allows us to identify the complemented copy of $\ell _{p}(\mathcal{M})$
inside $\mathcal{F}_{p}(\mathcal{M})$ as a Lipschitz free $p$-space over
a quotient space. In the following result, which also seems to be new even
for the case of $p=1$, we identify the aforementioned copy of
$\ell _{p}$ inside $\mathcal{F}_{p}(\mathcal{M})$ as a Lipschitz free
$p$-space on a subset of $\mathcal{M}$.

\begin{Theorem}
\label{thm:mainComplementedEllp2}
Let $p\in (0,1]$. Suppose that $(\mathcal{M},d)$ is either
\begin{enumerate}[(a)]\item[(a)] an infinite metric space, or
\item[(b)] a uniformly separated uncountable $p$-metric space.
\end{enumerate}
Then there exists $\mathcal{N}\subset \mathcal{M}$ such that
\begin{enumerate}[(iii)]\item[(i)] $\mathcal{F}_{p}(\mathcal{N})\simeq \ell _{p}$,
\item[(ii)] $L_{\jmath }\colon \mathcal{F}_{p}(\mathcal{N})\to
\mathcal{F}_{p}(\mathcal{M})$ is an isomorphic embedding, and
\item[(iii)] $L_{\jmath }(\mathcal{F}_{p}(\mathcal{N}))$ is complemented
in $\mathcal{F}_{p}(\mathcal{M})$.
\end{enumerate}
\end{Theorem}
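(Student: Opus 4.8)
The plan is to reduce all three conclusions to a single geometric task: \emph{produce a subset} $\mathcal{N}\subseteq\mathcal{M}$ \emph{with} $0\in\mathcal{N}$ \emph{that is a Lipschitz retract of} $\mathcal{M}$ \emph{and satisfies} $\mathcal{F}_{p}(\mathcal{N})\simeq\ell_{p}$. Once this is achieved, (i) holds by construction, and (ii)--(iii) come for free: if $r\colon\mathcal{M}\to\mathcal{N}$ is a Lipschitz retraction then, by functoriality, $L_{r}\circ L_{\jmath}=L_{r\circ\jmath}=\operatorname{Id}_{\mathcal{F}_{p}(\mathcal{N})}$, so $L_{\jmath}$ is bounded below (an isomorphic embedding, giving (ii)) and $L_{\jmath}\circ L_{r}$ is a bounded linear projection onto $L_{\jmath}(\mathcal{F}_{p}(\mathcal{N}))$ (giving (iii)). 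This is precisely \cite{AACD2018}*{Lemma 4.19}, and Theorem~\ref{thm:retr} records it as a direct-sum splitting. So the whole proof amounts to locating the right retract in each case.

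Case (b) is the easy one and I would dispatch it first. Since $\mathcal{M}$ is uncountable while $\mathcal{M}=\bigcup_{n}B(0,n)$ would be countable if every ball were countable, some ball $B(0,C)$ contains infinitely many points; I pick a countable $\{x_{n}\}_{n}\subseteq B(0,C)\setminus\{0\}$ and set $\mathcal{N}=\{0\}\cup\{x_{n}\}_{n}$. As a subset of a uniformly separated space $\mathcal{N}$ is uniformly separated, and it is bounded, so $\mathcal{F}_{p}(\mathcal{N})\simeq\ell_{p}$ by \cite{AACD2018}*{Theorem 4.14}. The collapse map $r$, equal to the identity on $\mathcal{N}$ and to $0$ off $\mathcal{N}$, is a Lipschitz retraction: the only nontrivial estimate is for $x\in\mathcal{N}\setminus\{0\}$ and $y\notin\mathcal{N}$, where $d(r(x),r(y))=d(x,0)\le C\le (C/\varepsilon)\,d(x,y)$ with $\varepsilon$ the separation constant. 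This settles (b).

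Case (a) is where the work lies. Here $\mathcal{M}$ is a genuine metric space but need not be uniformly separated, so the naive collapse map above is \emph{not} Lipschitz: a point outside $\mathcal{N}$ may be arbitrarily close to a point of $\mathcal{N}$ while being sent far away to the base point. I would split along the dichotomy: either $\mathcal{M}$ contains a bounded infinite $\varepsilon$-separated subset, or $\mathcal{M}$ has an accumulation point and hence a nontrivial convergent sequence. In the convergent case I pass to a subsequence $x_{n}\to a$ with geometric decay $d(x_{n+1},a)\le\tfrac14 d(x_{n},a)$ and, after arranging $d(x_{1},a)<\tfrac12 d(0,a)$ so that the cluster is separated from the base point, take $\mathcal{N}=\{0\}\cup\{x_{n}\}_{n}$; its completion is $\{0,a\}\cup\{x_{n}\}_{n}$, whence $\mathcal{F}_{p}(\mathcal{N})\simeq\mathbb{R}\oplus\ell_{p}\simeq\ell_{p}$. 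The retraction is then a radial, scale-by-scale collapse of the geometric shells around $a$ onto the corresponding $x_{n}$, the geometric spacing together with the \emph{ordinary} triangle inequality keeping the Lipschitz constant bounded; the separated case is handled by an analogous construction on shrinking neighborhoods of the chosen points.

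The hard part --- and the reason case (a) is restricted to genuine metric spaces --- is exactly this retraction. The two delicate points I expect to fight with are (1) controlling the interface between the $\ell_{p}$-cluster and the rest of $\mathcal{M}$, where a careless retraction incurs an uncontrolled jump (this is what forces both the geometric spacing and the preliminary separation of the cluster from $0$), and (2) checking that the retraction stays Lipschitz across adjacent shells, which is where the full triangle inequality is essential and where the $p$-metric analogue genuinely breaks down. Should a clean metric retraction prove too rigid, the fallback is to drop the retraction and instead build the bounded projection $L_{\jmath}\circ\Phi$ from a linear Lipschitz-extension operator $\Phi\colon\mathcal{F}_{p}(\mathcal{M})\to\mathcal{F}_{p}(\mathcal{N})$ with $\Phi\circ L_{\jmath}=\operatorname{Id}$, which for a convergent or separated sequence can be written down explicitly; this still yields (ii) and (iii) with range $L_{\jmath}(\mathcal{F}_{p}(\mathcal{N}))$.
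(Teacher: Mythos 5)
Your treatment of case (b) is correct, and in fact a bit slicker than the paper's: because the \emph{whole} space $\mathcal{M}$ is uniformly separated, the collapse map (identity on $\mathcal{N}$, base point elsewhere) really is Lipschitz, and combining \cite{AACD2018}*{Theorem 4.14} with \cite{AACD2018}*{Lemma 4.19} then gives (i)--(iii). The paper reaches the same conclusion through Lemma~\ref{lem:bigSeparatedBoundedSubset}, which does \emph{not} use a retraction, and your argument is a legitimate alternative there.

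Case (a), however, has a genuine gap, and it is fatal to your master plan rather than a technicality. A continuous image of a connected space is connected, so for $\mathcal{M}=[0,1]$ (an infinite metric space squarely in case (a)) every continuous retract is an interval, whose free $p$-space is $\{0\}$, $\mathbb{R}$, or $\mathcal{F}_p([0,1])$ --- never $\ell_p$. In particular no countable infinite set such as your $\mathcal{N}=\{0\}\cup\{x_n\}_n$ is even a \emph{continuous} retract of $[0,1]$: the ``radial, scale-by-scale collapse'' must jump somewhere inside each shell, and the same obstruction kills the ``shrinking neighborhoods'' variant in your bounded-separated subcase (a point just outside a neighborhood of $x_n$ can be arbitrarily close to a point just inside it). Note also that when the limit $a$ belongs to $\mathcal{M}$ your $\mathcal{N}$ is not closed, so continuity of $r$ already fails at $a$. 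This is exactly why the paper never retracts in case (a): it replaces the retraction by disjointly supported real-valued Lipschitz bumps $f_\gamma(x)=\max\{r_\gamma-d(x,x_\gamma),0\}$, which collapse \emph{softly} and assemble into a bounded linear map $P\colon\mathcal{F}_p(\mathcal{M})\to\ell_p(\Gamma)$ left-inverting the natural map (Lemmas~\ref{lemma:condition2} and~\ref{lem:condition3}). Your one-sentence fallback (``a linear operator $\Phi$ with $\Phi\circ L_\jmath=\operatorname{\mathrm{Id}}$ can be written down explicitly'') is precisely this construction, but it is where \emph{all} the work lies: one must choose the radii $r_\gamma$ with the two-sided estimates of Lemma~\ref{lem:ToninIdea} and verify the Lipschitz bound for the $\ell_p(\Gamma)$-valued sum. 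Two further holes: your dichotomy omits unbounded spaces --- $\mathcal{M}=\{2^n\colon n\in\mathbb{N}\}\subset\mathbb{R}$ has no accumulation point and no bounded infinite separated subset, yet lies in case (a) (the paper covers it in Lemma~\ref{lem:ToninIdea} by taking $x_n\to\infty$ with increasing $r_n$) --- and your claim $\mathcal{F}_p(\{0,a\}\cup\{x_n\}_n)\simeq\mathbb{R}\oplus\ell_p$ is asserted without proof, although for $p<1$ it is itself a nontrivial result (essentially Proposition~\ref{Thm:AnsoMichaLast}).
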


Before tackling the proof of Theorems~\ref{thm:mainComplementedEllp} and \ref{thm:mainComplementedEllp2}, let us highlight some interesting
applications.

\begin{Corollary}
\label{cor:addingRd}Let $0<p\le 1$. There is a constant $C$ such that for every
$n\in \mathbb{N}$ and every infinite metric space $\mathcal{M}$,
\begin{equation*}
\mathcal{F}_{p}(\mathcal{M}) \simeq _{C} \mathcal{F}_{p}(\mathcal{M})
\oplus \ell _{p}^{n} \simeq _{C} \mathcal{F}_{p}(\mathcal{M})\oplus \ell
_{p}.
\end{equation*}
\end{Corollary}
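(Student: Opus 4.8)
The plan is to derive this entirely from the uniform complementation established in Theorem~\ref{thm:mainComplementedEllp}, combined with the self-absorbing property of $\ell_p(\kappa)$ for an infinite cardinal $\kappa$. The key observation is that no form of Pe\l czy\'nski's decomposition method is really needed here: once a copy of $\ell_p(\kappa)$ is complemented in $\mathcal{F}_p(\mathcal{M})$ with a constant independent of $\mathcal{M}$, the spaces $\ell_p^n$ and $\ell_p$ can simply be swallowed by that copy.

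First I would fix, once and for all, a constant $C_0 > 2^{1/p}$ and set $\kappa = \operatorname{dens}\mathcal{M}$, which is an infinite cardinal since $\mathcal{M}$ is an infinite metric space. By Theorem~\ref{thm:mainComplementedEllp}(a), $\ell_p(\kappa)$ is $C_0$-complemented in $\mathcal{F}_p(\mathcal{M})$, so there is a bounded projection $P$ on $\mathcal{F}_p(\mathcal{M})$, with norm controlled by $C_0$, whose range is $C_0$-isomorphic to $\ell_p(\kappa)$. Writing $F=\ker P$ and using the elementary fact that in a $p$-Banach space a norm-bounded projection produces a decomposition $X\simeq \operatorname{Im}P\oplus F$ with isomorphism constant depending only on $\Vert P\Vert$ and $p$, I obtain
\[
\mathcal{F}_p(\mathcal{M}) \simeq_{C_1} \ell_p(\kappa)\oplus F,
\]
where $\oplus$ denotes the $\ell_p$-sum and $C_1$ depends only on $C_0$ and $p$, and in particular \emph{not} on $\mathcal{M}$.

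The second ingredient is purely about sequence spaces: because $\kappa$ is infinite we have bijections of index sets $\kappa\sqcup\mathbb{N}\cong\kappa$ and $\kappa\sqcup\{1,\dots,n\}\cong\kappa$, whence
\[
\ell_p(\kappa)\oplus\ell_p \cong \ell_p(\kappa) \cong \ell_p(\kappa)\oplus\ell_p^n
\]
isometrically. Feeding this into the displayed decomposition gives
\[
\mathcal{F}_p(\mathcal{M})\oplus\ell_p \simeq_{C_1} \ell_p(\kappa)\oplus\ell_p\oplus F \cong \ell_p(\kappa)\oplus F \simeq_{C_1} \mathcal{F}_p(\mathcal{M}),
\]
and the identical computation with $\ell_p^n$ in place of $\ell_p$ yields $\mathcal{F}_p(\mathcal{M})\oplus\ell_p^n\simeq_{C_1^2}\mathcal{F}_p(\mathcal{M})$. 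Taking $C=C_1^4$ (so as to also cover the middle relation of the statement by transitivity, and recalling that all $\ell_p$-type finite direct sums of two $p$-Banach spaces are mutually isomorphic with constants depending only on $p$) proves both displayed relations with a constant independent of $n$ and of $\mathcal{M}$.

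Honestly, the substantive work is entirely contained in the cited Theorem~\ref{thm:mainComplementedEllp}; the only delicate point in the argument above is bookkeeping of constants. The conversion of a norm-bounded projection into a direct-sum decomposition and the isometric absorption identities for $\ell_p(\kappa)$ are routine. The one genuinely essential feature — and the reason the conclusion holds with a \emph{single} constant for all infinite metric spaces at once — is that the complementation constant $C_0$ furnished by Theorem~\ref{thm:mainComplementedEllp} does not depend on $\mathcal{M}$; this uniformity is exactly what survives the computation and delivers a universal $C$.
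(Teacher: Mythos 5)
Your proof is correct and is precisely the argument the paper intends: the corollary is stated there without proof as an immediate consequence of Theorem~\ref{thm:mainComplementedEllp}, whose $\mathcal{M}$-independent complementation constant is combined with the isometric absorption $\ell_p(\kappa)\oplus\ell_p\cong\ell_p(\kappa)\cong\ell_p(\kappa)\oplus\ell_p^n$ for the infinite cardinal $\kappa=\operatorname{dens}\mathcal{M}$, exactly as you do. Your bookkeeping of constants (projection $\Rightarrow$ decomposition with constant depending only on $\Vert P\Vert$ and $p$, then transitivity) is sound, so nothing further is needed.
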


\begin{Proposition}
\label{prop:dropapoint}Let $(\mathcal{M},d)$ be an infinite metric space and $0<p\le 1$. Then
$\mathcal{F}_{p}(\mathcal{M})\simeq \mathcal{F}_{p}(\mathcal{M}\setminus
\{x_{0}\})$ for every $x_{0}\in \mathcal{M}$.
\end{Proposition}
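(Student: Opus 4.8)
The plan is to split according to whether the removed point $x_0$ is isolated in $\mathcal{M}$, after first normalizing the base point. Since $\mathcal{M}$ is infinite I may pick a base point different from $x_0$, and because $\mathcal{F}_p$ is independent of the choice of base point up to isometry (a standard fact following from the universal property; cf.\ \cite{AACD2018}), there is no loss in assuming $x_0\neq 0$, so that $0\in\mathcal{N}:=\mathcal{M}\setminus\{x_0\}$ and $\mathcal{N}$ is again a pointed $p$-metric space. (This base-point reduction is only genuinely needed in the case $x_0=0$.)

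First I would treat the case where $x_0$ is \emph{not} isolated. Then every neighbourhood of $x_0$ meets $\mathcal{N}$, so $\mathcal{N}$ is dense in $\mathcal{M}$. Because Lipschitz maps on a dense subset extend uniquely to the whole space with the same Lipschitz constant, the canonical map $L_{\jmath}\colon\mathcal{F}_p(\mathcal{N})\to\mathcal{F}_p(\mathcal{M})$ is an isometric isomorphism; that is, $\mathcal{F}_p$ is insensitive to passing to a dense subset. This gives $\mathcal{F}_p(\mathcal{M})\simeq\mathcal{F}_p(\mathcal{N})$ — in fact isometrically — and disposes of this case.

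Next I would handle the case where $x_0$ is isolated. Then $\{x_0\}$ is open, so $\mathcal{N}$ is closed with $0\in\mathcal{N}$, and $\mathcal{N}$ is a Lipschitz retract of $\mathcal{M}$: fixing any $z\in\mathcal{N}$ and setting $r(x_0)=z$, $r|_{\mathcal{N}}=\operatorname{id}$, the only nontrivial estimate is $d(z,y)\le(1+d(z,x_0)/\varepsilon_0)\,d(x_0,y)$ for $y\in\mathcal{N}$, where $\varepsilon_0=\inf_{y\in\mathcal{N}}d(x_0,y)>0$ precisely because $x_0$ is isolated. The quotient $\mathcal{M}/\mathcal{N}$ is the two-point space $\{0,x_0\}$, whence $\mathcal{F}_p(\mathcal{M}/\mathcal{N})\simeq\mathbb{R}=\ell_p^1$, and Theorem~\ref{thm:retr} yields $\mathcal{F}_p(\mathcal{M})\simeq\mathcal{F}_p(\mathcal{N})\oplus\ell_p^1$.

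To close the isolated case, I would observe that $\mathcal{N}$ is an infinite metric space (removing one point from an infinite space leaves it infinite), so Corollary~\ref{cor:addingRd} applies to $\mathcal{N}$ with $n=1$ and absorbs the extra one-dimensional summand: $\mathcal{F}_p(\mathcal{N})\oplus\ell_p^1\simeq\mathcal{F}_p(\mathcal{N})$. Combining this with the previous isomorphism gives $\mathcal{F}_p(\mathcal{M})\simeq\mathcal{F}_p(\mathcal{N})$, as required. I do not anticipate a deep obstacle here; the organizing difficulty is the isolated/non-isolated dichotomy, and the only points needing care are the density (completion) invariance of $\mathcal{F}_p$ in the non-isolated case and the absorption of a single $\ell_p^1$ summand in the isolated case, the latter being exactly what Corollary~\ref{cor:addingRd} supplies.
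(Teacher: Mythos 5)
Your proof is correct, and it shares the paper's overall skeleton: the paper's own proof splits according to whether $r=\inf_{x\in \mathcal{M}\setminus \{x_{0}\}} d(x_{0},x)$ is zero or positive, which is exactly your non-isolated/isolated dichotomy; in the case $r=0$ it likewise invokes density invariance of $\mathcal{F}_{p}$ (via Proposition 4.17 of \cite{AACD2018}, the content of your extension argument); and it performs the same final absorption of a one-dimensional summand through Corollary~\ref{cor:addingRd}. Where you genuinely diverge is the decomposition step when $x_{0}$ is isolated. The paper applies Lemma~\ref{lem:isoWithELLPSum} directly to the two-block partition $\mathcal{M}_{1}=\mathcal{M}\setminus \{x_{0}\}$, $\mathcal{M}_{2}=\{0,x_{0}\}$, verifying hypothesis \eqref{eq:almostEllPSum} by the one-line estimate
\begin{equation*}
d^{p}(0,x_{0})+d^{p}(0,x)\le \left (1+2\,\frac{d^{p}(0,x_{0})}{r^{p}}\right ) d^{p}(x_{0},x),
\end{equation*}
which yields $\mathcal{F}_{p}(\mathcal{M})\simeq \mathcal{F}_{p}(\mathcal{M}\setminus \{x_{0}\})\oplus \mathcal{F}_{p}(\{0,x_{0}\})$ with an explicit splitting constant. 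You instead build a Lipschitz retraction onto $\mathcal{N}=\mathcal{M}\setminus \{x_{0}\}$ and invoke Theorem~\ref{thm:retr}, identifying the quotient $\mathcal{M}/\mathcal{N}$ as a two-point space. Both routes are sound and neither is circular: Corollary~\ref{cor:addingRd} rests on Theorems~\ref{thm:discretePoints} and \ref{thm:ellPComplementedMetricCase}, which do not use this proposition, and Theorem~\ref{thm:retr} itself ultimately reduces to Lemma~\ref{lem:isoWithELLPSum}. The trade-off is that the paper's argument is lighter and quantitatively explicit, while yours routes through the heavier retraction/quotient machinery but replaces the inequality check by individually trivial verifications (closedness of $\mathcal{N}$, the retraction estimate using $\varepsilon _{0}>0$, and $\mathcal{F}_{p}$ of a two-point space being $\mathbb{R}$). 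Your preliminary base-point normalization, and your care that $0\in \mathcal{N}$ and $\mathcal{N}$ is closed so that the quotient construction applies, are exactly the hypotheses that needed checking.
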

\begin{proof}
If $r=\inf _{x\in \mathcal{M}\setminus \{x_{0}\}} d(x_{0},x)=0$ the
result follows from \cite{AACD2018}*{Proposition 4.17}. Assume that
$r>0$ and pick an arbitrary point $0\in \mathcal{M}\setminus \{x_{0}
\}$. If $x\in \mathcal{M}\setminus \{x_{0}\}$ we have
\begin{equation*}
d^{p}(0,x_{0})+d^{p}(0,x)\le 2 d^{p}(0,x_{0}) + d^{p}(x_{0},x)\le
\left ( 1 + 2 \frac{ d^{p}(0,x_{0}) }{r^{p}} \right ) d^{p}(x_{0},x).
\end{equation*}
Then, by Lemma~\ref{lem:isoWithELLPSum} and
Corollary~\ref{cor:addingRd},
\begin{align*}
\mathcal{F}_{p}(\mathcal{M})
& \simeq \mathcal{F}_{p}(\mathcal{M}
\setminus \{x_{0}\}) \oplus \mathcal{F}_{p}(\{0,x_{0}\})
\\
& \simeq \mathcal{F}_{p}(\mathcal{M}\setminus \{x_{0}\}) \oplus
\mathbb{R}
\\
& \simeq \mathcal{F}_{p}(\mathcal{M}\setminus \{x_{0}\}).\qedhere
\end{align*}
\end{proof}

Further, we obtain the following extensions of \cite{HN17}*{Theorem
4 and Proposition 5} to the whole range of values of $p\in (0,1]$.
Recall that there are examples of nets in $\mathbb{R}^{2}$ which are not
bi-Lipschitz equivalent (see e.g.~\cite{BK98}, \cite{M98} or
\cite{BenLin2000}*{p. 242}).

\begin{Theorem}
\label{thm:nets}
Let $0<p\leq 1$. Suppose that $\mathcal{M}$ is a uniformly separated
$p$-metric space and that $\mathcal{N}\subseteq \mathcal{M}$ is a net
such that $|\mathcal{N}|=|\mathcal{M}|$. Then $\mathcal{F}_{p}(
\mathcal{M})\simeq \mathcal{F}_{p}(\mathcal{N})$.
\end{Theorem}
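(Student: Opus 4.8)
The plan is to combine the net decomposition furnished by Corollary~\ref{cor:nets} with the complementability result of Theorem~\ref{thm:mainComplementedEllp}, and then to absorb a copy of $\ell_p$ by cardinal arithmetic. First I would dispose of the trivial case: if $\mathcal{M}$ is finite, then $\mathcal{N}\subseteq\mathcal{M}$ together with $|\mathcal{N}|=|\mathcal{M}|$ forces $\mathcal{N}=\mathcal{M}$ and there is nothing to prove; so I assume $\mathcal{M}$, and hence $\mathcal{N}$, infinite. Since $\mathcal{M}$ is uniformly separated and $\mathcal{N}$ is a net, Corollary~\ref{cor:nets} gives
\[
\mathcal{F}_{p}(\mathcal{M})\simeq\mathcal{F}_{p}(\mathcal{N})\oplus\ell_{p}(\kappa),\qquad \kappa:=|\mathcal{M}\setminus\mathcal{N}|.
\]
Because $\mathcal{M}\setminus\mathcal{N}\subseteq\mathcal{M}$ and $|\mathcal{N}|=|\mathcal{M}|$, the cardinal $\kappa$ satisfies $\kappa\le|\mathcal{N}|$. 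It therefore suffices to show that $\mathcal{F}_{p}(\mathcal{N})$ absorbs the summand $\ell_{p}(\kappa)$, i.e.\ that $\mathcal{F}_{p}(\mathcal{N})\oplus\ell_{p}(\kappa)\simeq\mathcal{F}_{p}(\mathcal{N})$.

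The key point is that $\mathcal{N}$, being a net, is uniformly separated; hence every point of $\mathcal{N}$ is isolated, and since the only dense subset of a uniformly separated space is the whole space, $\operatorname{dens}\mathcal{N}=|\mathcal{N}|$. Consequently $\mathcal{N}$ contains $\operatorname{dens}\mathcal{N}$-many isolated points and case~(b) of Theorem~\ref{thm:mainComplementedEllp} applies, providing a complemented copy of $\ell_{p}(|\mathcal{N}|)$ in $\mathcal{F}_{p}(\mathcal{N})$; write $\mathcal{F}_{p}(\mathcal{N})\simeq X\oplus\ell_{p}(|\mathcal{N}|)$ for the corresponding splitting. I would then finish with
\[
\mathcal{F}_{p}(\mathcal{N})\oplus\ell_{p}(\kappa)\simeq X\oplus\ell_{p}(|\mathcal{N}|)\oplus\ell_{p}(\kappa)\simeq X\oplus\ell_{p}(|\mathcal{N}|)\simeq\mathcal{F}_{p}(\mathcal{N}),
\]
where the middle isomorphism (in fact an isometry) rests on the identity $\ell_{p}(|\mathcal{N}|)\oplus\ell_{p}(\kappa)=\ell_{p}(|\mathcal{N}|+\kappa)=\ell_{p}(|\mathcal{N}|)$, valid since $|\mathcal{N}|$ is infinite and $\kappa\le|\mathcal{N}|$. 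Chaining this with the first display yields $\mathcal{F}_{p}(\mathcal{M})\simeq\mathcal{F}_{p}(\mathcal{N})$.

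As for difficulty, there is essentially no hard analytic step: once Corollary~\ref{cor:nets} and Theorem~\ref{thm:mainComplementedEllp} are available, the whole argument is bookkeeping. The two points that warrant care are the identity $\operatorname{dens}\mathcal{N}=|\mathcal{N}|$ for a uniformly separated space, which guarantees that the complementation theorem delivers $\ell_{p}$ of the \emph{full} cardinality $|\mathcal{N}|$ (anything smaller would not absorb $\ell_{p}(\kappa)$), and the cardinal identity $|\mathcal{N}|+\kappa=|\mathcal{N}|$ that powers the absorption. The only genuine subtlety I would expect is verifying that the hypotheses of Theorem~\ref{thm:mainComplementedEllp}(b) are actually met by $\mathcal{N}$, rather than any quantitative estimate; note in particular that no uniformity of constants is claimed, so the loss incurred in passing through the complementation theorem is harmless.
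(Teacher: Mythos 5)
Your proposal is correct and follows essentially the same route as the paper's proof: both split $\mathcal{F}_{p}(\mathcal{N})\simeq X\oplus\ell_{p}(|\mathcal{N}|)$ via Theorem~\ref{thm:mainComplementedEllp} (your observation that the net $\mathcal{N}$ is uniformly separated, hence consists of $\operatorname{dens}\mathcal{N}=|\mathcal{N}|$ isolated points so that case (b) applies, is precisely the intended justification), combine this with the decomposition $\mathcal{F}_{p}(\mathcal{M})\simeq\mathcal{F}_{p}(\mathcal{N})\oplus\ell_{p}(|\mathcal{M}\setminus\mathcal{N}|)$ from Corollary~\ref{cor:nets}, and absorb the $\ell_{p}$ summand by the cardinal identity $|\mathcal{N}|+|\mathcal{M}\setminus\mathcal{N}|=|\mathcal{M}|=|\mathcal{N}|$. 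Your explicit treatment of the finite case and of the inequality $\kappa\le|\mathcal{N}|$ are harmless elaborations of steps the paper leaves implicit.
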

\begin{proof}
By Theorem~\ref{thm:mainComplementedEllp}, we have $\mathcal{F}_{p}(
\mathcal{N})\simeq X \oplus \ell _{p}(|\mathcal{N}|)$ for some $p$-Banach
space $X$. Then, appealing to Theorem~\ref{cor:nets},
\begin{align*}
\mathcal{F}_{p}(\mathcal{M})
&\simeq X\oplus \ell _{p}(|\mathcal{N}|)
\oplus \ell _{p}(|\mathcal{M}\setminus \mathcal{N}|)
\\
&\simeq X\oplus \ell _{p}(|\mathcal{M}|)
\\
&=X\oplus \ell _{p}(|\mathcal{N}|)
\\
&\simeq \mathcal{F}_{p}(\mathcal{N}).\qedhere
\end{align*}
\end{proof}

\begin{Proposition}
\label{prop:nets}Let $\mathcal{M}$ be a $p$-metric space, $0<p\leq 1$. Suppose
$\mathcal{N}_{1}$ and $\mathcal{N}_{2}$ are nets in $\mathcal{M}$ of the
same cardinality, $\operatorname{dens} \mathcal{M}$.
Then $\mathcal{F}_{p}(\mathcal{N}_{1})\simeq \mathcal{F}_{p}(
\mathcal{N}_{2})$.
\end{Proposition}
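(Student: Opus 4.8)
The plan is to reduce both sides, via Theorem~\ref{thm:nets}, to a single uniformly separated \emph{common coarsening} of the two nets. The point is that Theorem~\ref{thm:nets} cannot be applied to the pairs $(\mathcal{M},\mathcal{N}_i)$ directly, since $\mathcal{M}$ need not be uniformly separated; and the naive substitute $\mathcal{N}_1\cup\mathcal{N}_2$ fails for the same reason, because a point of $\mathcal{N}_1$ may lie arbitrarily close to a point of $\mathcal{N}_2$. \emph{This failure of uniform separation of the union is the main obstacle}, and I would circumvent it by thinning $\mathcal{N}_1\cup\mathcal{N}_2$ out to a maximal $\delta$-separated subset for a carefully chosen $\delta$.

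In detail, let $\varepsilon_i>0$ be the separation constant and $R_i<\infty$ the covering radius of $\mathcal{N}_i$, and put $\kappa:=|\mathcal{N}_1|=|\mathcal{N}_2|$, which we may assume infinite (if $\mathcal{M}$ is finite then $\mathcal{N}_1=\mathcal{N}_2=\mathcal{M}$ and there is nothing to prove). Choose $\delta>0$ with $2\delta^p<\min\{\varepsilon_1^p,\varepsilon_2^p\}$ and, using Zorn's lemma, let $\mathcal{N}_0$ be a maximal $\delta$-separated subset of $S:=\mathcal{N}_1\cup\mathcal{N}_2$. Then $\mathcal{N}_0$ is uniformly separated, and maximality makes it $\delta$-dense in $S$, so for each $i$ there is a selector $\pi_i\colon\mathcal{N}_i\to\mathcal{N}_0$ with $d(x,\pi_i(x))\le\delta$ for all $x\in\mathcal{N}_i$.

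The technical heart is to check, with the $p$-triangle law, that each $\pi_i$ is a bi-Lipschitz bijection onto its image and that $\pi_i(\mathcal{N}_i)$ is a net in $\mathcal{N}_0$ of cardinality $\kappa$. The choice $2\delta^p<\varepsilon_i^p$ is exactly what is needed: if $x\neq x'$ in $\mathcal{N}_i$ had $\pi_i(x)=\pi_i(x')$, then $d^p(x,x')\le 2\delta^p<\varepsilon_i^p$, impossible, so $\pi_i$ is injective; and for distinct $x,x'\in\mathcal{N}_i$, writing $w=\pi_i(x)$ and $w'=\pi_i(x')$, the estimates
\begin{equation*}
d^p(w,w')\le 2\delta^p+d^p(x,x')\le 2\,d^p(x,x'),\qquad d^p(x,x')\le 2\delta^p+d^p(w,w')\le 3\,d^p(w,w')
\end{equation*}
(the first using $d(x,x')\ge\varepsilon_i$, the second using $d(w,w')\ge\delta$) show that $\pi_i$ and $\pi_i^{-1}$ are Lipschitz. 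Coarse density of $\pi_i(\mathcal{N}_i)$ in $\mathcal{N}_0$ follows from the coarse density of $\mathcal{N}_i$ in $\mathcal{M}$ (i.e.\ $d(w,\mathcal{N}_i)\le R_i$ for $w\in\mathcal{N}_0\subseteq\mathcal{M}$) together with $d(x,\pi_i(x))\le\delta$, and injectivity gives $|\pi_i(\mathcal{N}_i)|=\kappa=|\mathcal{N}_0|$ (here $|\mathcal{N}_0|\le|S|=\kappa$ while $\pi_1$ embeds $\mathcal{N}_1$ into $\mathcal{N}_0$).

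It then remains to chain the isomorphisms. For each $i$, the bi-Lipschitz bijection $\pi_i$ induces, by the universal property of $\mathcal{F}_p$ recalled in Section~\ref{sec1}, an isomorphism $\mathcal{F}_p(\mathcal{N}_i)\simeq\mathcal{F}_p(\pi_i(\mathcal{N}_i))$; and since $\pi_i(\mathcal{N}_i)$ is a net of full cardinality in the uniformly separated space $\mathcal{N}_0$, Theorem~\ref{thm:nets} yields $\mathcal{F}_p(\pi_i(\mathcal{N}_i))\simeq\mathcal{F}_p(\mathcal{N}_0)$. Applying this for $i=1$ and $i=2$ gives $\mathcal{F}_p(\mathcal{N}_1)\simeq\mathcal{F}_p(\mathcal{N}_0)\simeq\mathcal{F}_p(\mathcal{N}_2)$, as desired. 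Note that the case $p<1$ genuinely needs the sharper threshold $\delta<2^{-1/p}\min\{\varepsilon_1,\varepsilon_2\}$ rather than the $p=1$ choice $\delta<\tfrac12\min\{\varepsilon_1,\varepsilon_2\}$, because the $p$-triangle inequality degrades the separation estimates used above.
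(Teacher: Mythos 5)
Your proof is correct and takes essentially the same route as the paper, whose own proof simply defers to \cite{HN17}*{Proposition 5}: manufacture a common uniformly separated space in which bi-Lipschitz copies of both nets sit as nets of full cardinality, then apply Theorem~\ref{thm:nets} twice. Your symmetric thinning of $\mathcal{N}_1\cup\mathcal{N}_2$ to a maximal $\delta$-separated subset, with the threshold $2\delta^p<\min\{\varepsilon_1^p,\varepsilon_2^p\}$ forced by the $p$-triangle inequality, is a correct, self-contained implementation of exactly the reduction the paper leaves to the cited reference.
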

\begin{proof}
The proof follows from Theorem~\ref{thm:nets} exactly in the same way
as \cite{HN17}*{Proposition~5} follows from \cite{HN17}*{Theorem 4}.
\end{proof}

Before presenting the results on which
Theorems~\ref{thm:mainComplementedEllp} and \ref{thm:mainComplementedEllp2} are based, let us note that for $p=1$
and for separable spaces the result has a quantitative improvement in
\cite{CuthJohanis2017}, where it is proved that $\ell _{1}$ is
isometric to a $1$-complemented subspace of $\mathcal{F}(\mathcal{M})$
whenever $\mathcal{M}$ has an accumulation point or contains an infinite
ultrametric space. However, by the recent work \cite{OO19}, there
is a metric space $\mathcal{M}$ such that $\ell _{1}$ does not
isometrically embed into $\mathcal{F}(\mathcal{M})$. These advances
suggest several natural areas for further research,
see, e.g., Questions~\ref{qu:1complemented} and \ref{qu:1embeded}.

Finally, in what remains of this section we provide results that imply
Theorem~\ref{thm:mainComplementedEllp} and
Theorem~\ref{thm:mainComplementedEllp2}. The arguments in our proofs are
inspired by \cite{CDW2016}, \cite{CuthJohanis2017} and
\cite{HN17}.

Given a map $f\colon X \to Y$, where $X$ is a set and $Y$ is a vector
space, we shall denote the set $ f^{-1}(Y\setminus \{0\})$ by
$\operatorname{\mathrm{{supp}}}_{0}(f)$. 

\begin{Lemma}
\label{lemma:condition2}
Let $(\mathcal{M},d)$ be a $p$-metric space, $C>0$, $(x_{\gamma })_{
\gamma \in \Gamma }$ and $(y_{\gamma })_{\gamma \in \Gamma }$ be
sequences in $\mathcal{M}$, and $(f_{\gamma })_{\gamma \in \Gamma }$ be
a sequence of $C$-Lipschitz maps from $(\mathcal{M},d)$ into
$\mathbb{R}$ such that $(\operatorname{\mathrm{{supp}}}_{0}(f_{\gamma
}))_{\gamma \in \Gamma }$ is a pairwise disjoint sequence, $f_{\gamma
}(x_{\gamma })\neq 0$ for every $\gamma \in \Gamma $, $f_{\gamma _{1}}(x
_{\gamma _{2}}) = 0$ for every $\gamma _{1}$, $\gamma _{2}\in \Gamma $ with
$\gamma _{1}\neq\gamma _{2}$, and $f_{\gamma _{1}}(y_{\gamma _{2}}) = 0$
for every $\gamma _{1}$, $\gamma _{2}\in \Gamma $. Finally, suppose there
exists $t>0$ with
\begin{equation*}
\frac{f_{\gamma }(x_{\gamma })}{d(x_{\gamma },y_{\gamma })}\geq
\frac{1}{t}, \quad \gamma \in \Gamma .
\end{equation*}
Then $\ell _{p}(\Gamma )$ is $2^{1/p}Ct$-complemented in $\mathcal{F}
_{p}(\mathcal{M})$.

Moreover, if  $y_{\gamma } = 0$ for every $\gamma \in \Gamma $,
and set $\mathcal{N}= \{x_{\gamma }\colon \gamma \in \Gamma
\}\cup \{0\}$, we have:
\begin{enumerate}[(iii)]\item[(i)] $\mathcal{F}_{p}(\mathcal{N})\simeq \ell _{p}(\Gamma )$;
\item[(ii)] $L_{\jmath }\colon \mathcal{F}_{p}(\mathcal{N})\to
\mathcal{F}_{p}(\mathcal{M})$ is an isomorphic embedding; and
\item[(iii)] $L_{\jmath }(\mathcal{F}_{p}(\mathcal{N}))$ is complemented
in $\mathcal{F}_{p}(\mathcal{M})$.
\end{enumerate}
\end{Lemma}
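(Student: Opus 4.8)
The plan is to exhibit an embedding $J\colon \ell_p(\Gamma)\to\mathcal{F}_p(\mathcal{M})$ of norm at most $1$ together with a bounded projection onto its range. For the embedding I would send the $\gamma$-th unit vector to the normalized molecule
\[
u_\gamma:=\frac{\delta_{\mathcal{M}}(x_\gamma)-\delta_{\mathcal{M}}(y_\gamma)}{d(x_\gamma,y_\gamma)},
\]
which has norm $1$ (the molecule $d(x_\gamma,y_\gamma)>0$ since $f_\gamma(x_\gamma)\neq 0=f_\gamma(y_\gamma)$); since $\mathcal{F}_p(\mathcal{M})$ is a $p$-Banach space, the $p$-subadditivity of its quasinorm gives $\|\sum_\gamma a_\gamma u_\gamma\|^p\le\sum_\gamma|a_\gamma|^p$, so $J$ is well defined with $\|J\|\le1$. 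For the reverse map I would feed the data $(f_\gamma)$ into the universal property of $\mathcal{F}_p(\mathcal{M})$: consider $G\colon\mathcal{M}\to\ell_p(\Gamma)$ whose $\gamma$-th coordinate is $g_\gamma:=\tfrac{d(x_\gamma,y_\gamma)}{f_\gamma(x_\gamma)}\,f_\gamma$, the scalar $d(x_\gamma,y_\gamma)/f_\gamma(x_\gamma)$ being positive and at most $t$ by hypothesis. Because the sets $\operatorname{supp}_0(f_\gamma)$ are pairwise disjoint, every point lies in at most one of them, so $G(x)$ has at most one nonzero coordinate and indeed lands in $\ell_p(\Gamma)$. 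Replacing $G$ by $G-G(0)$ to make it vanish at the base point (this changes no difference $G(x)-G(x')$), I would let $P:=L_{G-G(0)}\colon\mathcal{F}_p(\mathcal{M})\to\ell_p(\Gamma)$ be the induced linear map.

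The heart of the argument, and the step I expect to carry the real content, is the Lipschitz estimate $\mathrm{Lip}(G)\le 2^{1/p}Ct$. Given $x,x'\in\mathcal{M}$, a coordinate $g_\gamma(x)-g_\gamma(x')$ can be nonzero only when $x$ or $x'$ belongs to $\operatorname{supp}_0(f_\gamma)$; by disjointness there are at most two such indices $\gamma$. For each of them $|g_\gamma(x)-g_\gamma(x')|\le t\,|f_\gamma(x)-f_\gamma(x')|\le tC\,d(x,x')$, so that $\|G(x)-G(x')\|_p^p\le 2\,(tC)^p d^p(x,x')$. This is exactly where the dimension-free constant $2^{1/p}$, and hence its independence of $\mathcal{M}$ and of $\Gamma$, comes from. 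Consequently $\|P\|=\mathrm{Lip}(G-G(0))\le 2^{1/p}Ct$.

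Next I would verify $P\circ J=\operatorname{Id}_{\ell_p(\Gamma)}$ by evaluating on unit vectors: $P(u_\gamma)=\bigl(G(x_\gamma)-G(y_\gamma)\bigr)/d(x_\gamma,y_\gamma)$, whose $\gamma'$-th coordinate vanishes when $\gamma'\neq\gamma$ because $f_{\gamma'}(x_\gamma)=f_{\gamma'}(y_\gamma)=0$, while its $\gamma$-th coordinate equals $(f_\gamma(x_\gamma)-f_\gamma(y_\gamma))/f_\gamma(x_\gamma)=1$ since $f_\gamma(y_\gamma)=0$. Thus $Q:=J\circ P$ is a projection of $\mathcal{F}_p(\mathcal{M})$ onto $J(\ell_p(\Gamma))$ with $\|Q\|\le\|J\|\,\|P\|\le 2^{1/p}Ct$, and $J$ is an isomorphism onto its range (it is bounded below because $P\circ J=\operatorname{Id}$). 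This proves that $\ell_p(\Gamma)$ is $2^{1/p}Ct$-complemented in $\mathcal{F}_p(\mathcal{M})$.

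For the \emph{moreover} part, where $y_\gamma=0$ and $\mathcal{N}=\{x_\gamma\colon\gamma\in\Gamma\}\cup\{0\}$, I would run the very same construction inside $\mathcal{F}_p(\mathcal{N})$ using the restrictions $f_\gamma|_{\mathcal{N}}$ (still $C$-Lipschitz). This produces $J_{\mathcal{N}}\colon\ell_p(\Gamma)\to\mathcal{F}_p(\mathcal{N})$ and $P_{\mathcal{N}}$ with $P_{\mathcal{N}}\circ J_{\mathcal{N}}=\operatorname{Id}$, so $J_{\mathcal{N}}$ is bounded below. The hypotheses force the $x_\gamma$ to be distinct and nonzero, so $\mathcal{N}\setminus\{0\}=\{x_\gamma\}$ and the molecules $u_\gamma$ span a dense subspace of $\mathcal{F}_p(\mathcal{N})$; a bounded-below operator with dense range is onto, hence $J_{\mathcal{N}}$ is an isomorphism, giving (i). Finally $L_{\jmath}\circ J_{\mathcal{N}}=J$ (both send $e_\gamma$ to $u_\gamma\in\mathcal{F}_p(\mathcal{M})$, using $y_\gamma=0$): since $J$ is an isomorphic embedding and $J_{\mathcal{N}}$ is invertible, $L_{\jmath}=J\circ J_{\mathcal{N}}^{-1}$ is an isomorphic embedding, proving (ii), and $L_{\jmath}(\mathcal{F}_p(\mathcal{N}))=J(\ell_p(\Gamma))$ is the range of the projection $Q$, proving (iii).
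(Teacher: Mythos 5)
Your proposal is correct and follows essentially the same route as the paper: the same normalized molecules $\bigl(\delta_{\mathcal{M}}(x_\gamma)-\delta_{\mathcal{M}}(y_\gamma)\bigr)/d(x_\gamma,y_\gamma)$, the same coordinate map $x\mapsto\sum_\gamma \frac{d(x_\gamma,y_\gamma)}{f_\gamma(x_\gamma)}f_\gamma(x)\,\mathbf{e}_\gamma$ with the identical two-support Lipschitz estimate yielding $2^{1/p}Ct$, and for the \emph{moreover} part the same factorization through the norm-one map $\mathbf{e}_\gamma\mapsto\delta_{\mathcal{N}}(x_\gamma)/d(x_\gamma,0)$. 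Your explicit base-point normalization $G-G(0)$ and the spelled-out density/bounded-below argument for surjectivity of $J_{\mathcal{N}}$ are just careful fillings-in of details the paper leaves implicit.
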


\begin{proof}
For each $\gamma \in \Gamma $ put
\begin{equation*}
b_{\gamma }= \frac{\delta _{\mathcal{M}}(x_{\gamma })- \delta _{
\mathcal{M}}(y_{\gamma })}{d(x_{\gamma },y_{\gamma })}\in \mathcal{F}
_{p}(\mathcal{M}).
\end{equation*}
Since $\Vert b_{\gamma }\Vert _{\mathcal{F}_{p}(\mathcal{M})}=1$, there
is a norm-one linear operator $S\colon \ell _{p}(\Gamma ) \to
\mathcal{F}_{p}(\mathcal{M})$ such that $S(\mathbf{e}_{\gamma })=b
_{\gamma }$ for all $\gamma \in \Gamma $.

Define
\begin{equation*}
f\colon \mathcal{M}\to \ell _{p}(\Gamma ), \quad x \mapsto
\sum _{\gamma \in \Gamma } \frac{d(x_{\gamma },y_{\gamma })}{f_{\gamma
}(x_{\gamma })} f_{\gamma }(x)\, \mathbf{e}_{\gamma }.
\end{equation*}
The map $f$ is $2^{1/p} C t$-Lipschitz. Indeed, for every $x$,
$y\in \mathcal{M}$ there are $\gamma _{1}, \gamma _{2}\in \Gamma $ such
that $f_{\gamma }(x)=f_{\gamma }(y)=0$ if $\gamma \notin \{\gamma _{1},
\gamma _{2}\}$. Hence,
\begin{equation*}
\|f(x)-f(y)\|
\leq t \left (\sum _{j=1}^{2} |f_{\gamma _{j}}(x)-f_{
\gamma _{j}}(y)|^{p} \right )^{1/p}
\leq {2^{1/p}C}{t}d(x,y),
\end{equation*}
and so there is a bounded linear  map $P\colon \mathcal{F}_{p}(
\mathcal{M})\to \ell _{p}(\Gamma )$ satisfying $P\circ \delta _{
\mathcal{M}}=f$ with $\|P\| \leq 2^{1/p}Ct$. We have
\begin{equation*}
P(b_{\gamma })=\frac{P(\delta _{\mathcal{M}}(x_{\gamma })) - P(
\delta _{\mathcal{M}}(y_{\gamma }))}{d(x_{\gamma },y_{\gamma })}
=\frac{f(x
_{\gamma }) - f(y_{\gamma })}{d(x_{\gamma },y_{\gamma })}
=\mathbf{e}
_{\gamma },
\end{equation*}
so that $P\circ S=\operatorname{\mathrm{{Id}}}_{\ell _{p}(\Gamma )}$.

For the ``Moreover'' part, we consider the norm-one linear operator
$S':\ell _{p}(\Gamma )\to \mathcal{F}_{p}(\mathcal{N})$ given by
\begin{equation*}
S'(\mathbf{e}_{\gamma })=\frac{\delta _{\mathcal{N}}(x_{\gamma _{1}})}{d(x
_{\gamma_{1}},0)},\quad \gamma \in \Gamma .
\end{equation*}
Then $S = L_{\jmath }\circ S'$, which combined with the above yields $P\circ L
_{\jmath }\circ S' = \operatorname{\mathrm{{Id}}}_{\ell _{p}(\Gamma )}$.
In particular, $L_{\jmath }\circ S'$ is an isomorphism between
$\ell _{p}(\Gamma )$ and $\mathcal{F}_{p}(\mathcal{N})$, so that
$L_{\jmath }$ is an isomorphism as well.
\end{proof}

In general, there is no tool for building nontrivial Lipschitz maps from
a quasimetric space into the real line. In fact, there are quasimetric
spaces $\mathcal{M}$, such as $(\mathbb{R}, |\cdot |^{1/p})$, for which
$\mathrm{Lip}_{0}(\mathcal{M},\mathbb{R})=\{0\}$ (see e.g.
\cite{Albiac2008}*{Lemma 2.7}). The first case when the situation is
quite different is when the quasimetric space contains isolated points.
Indeed, in that case, as we see next, $\ell _{p}$ embeds complementably
in $\mathcal{F}_{p}(\mathcal{M})$ and so $\ell _{\infty }$ must embed
complementably in $\mathrm{Lip}_{0}(\mathcal{M},\mathbb{R})=
\mathcal{F}_{p}(\mathcal{M})^{\ast }$!

\begin{Theorem}
\label{thm:discretePoints}
Suppose $(\mathcal{M},d)$ is a $p$-metric space and let $\kappa $ be the
cardinality of the set $\{x\colon d(x,\mathcal{M}\setminus \{x\})>0\}$
of isolated points of $\mathcal{M}$. Then for every $C>2^{1/p}$, the
space $\ell _{p}(\kappa )$ is $C$-complemented in $\mathcal{F}_{p}(
\mathcal{M})$.
\end{Theorem}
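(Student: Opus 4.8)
The plan is to reduce everything to Lemma~\ref{lemma:condition2} by manufacturing the required data out of the isolated points. Write $D=\{x\in\mathcal{M}:d(x,\mathcal{M}\setminus\{x\})>0\}$ for the set of isolated points, so that $|D|=\kappa$, and for $x\in D$ set $r_x=d(x,\mathcal{M}\setminus\{x\})>0$. The crucial observation, which compensates for the fact that distance functions need not be Lipschitz when $p<1$, is that the scaled indicator $f_x:=r_x\mathbf{1}_{\{x\}}$ is a $1$-Lipschitz map $\mathcal{M}\to\mathbb{R}$: its only nonzero increment is $r_x$, and it is attained against points at distance at least $r_x$ from $x$. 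These indicators will play the role of the maps $f_\gamma$ in Lemma~\ref{lemma:condition2}; since $\operatorname{supp}_0(f_x)=\{x\}$, the disjoint-support requirement and the conditions $f_{\gamma_1}(x_{\gamma_2})=0$ for $\gamma_1\neq\gamma_2$ hold automatically once the points $x_\gamma$ are taken distinct.

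Fix $\lambda>1$. For each $x\in D$ I use the definition of $r_x$ as an infimum to pick $y_x\in\mathcal{M}\setminus\{x\}$ with $d(x,y_x)<\lambda r_x$; this secures the ratio hypothesis $f_x(x)/d(x,y_x)=r_x/d(x,y_x)>1/\lambda$, so that the value $t=\lambda$ works. The one remaining hypothesis of Lemma~\ref{lemma:condition2}, namely $f_{\gamma_1}(y_{\gamma_2})=0$ for all pairs, translates into $y_{x'}\notin\Gamma$ for every selected point $x'$, where $\Gamma$ denotes the set of isolated points actually used. Thus I cannot take $\Gamma=D$ in general, since two isolated points may be each other's nearest neighbor; the task is to select a subset of $D$ of full cardinality all of whose members are assigned near-neighbors lying outside it.

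This selection is the heart of the argument. Define $g\colon D\to D$ by $g(x)=y_x$ when $y_x\in D$ and $g(x)=x$ otherwise. The functional graph of $g$ is properly $3$-colorable: every component of every finite subgraph has at most as many edges as vertices and hence at most one cycle, so it is $3$-colorable, and the De Bruijn--Erd\H{o}s theorem promotes this to all of $D$. Fix a coloring $c\colon D\to\{1,2,3\}$ with $c(g(x))\neq c(x)$ whenever $g(x)\neq x$, and let $\Gamma$ be a color class of maximal cardinality; as $\kappa$ is infinite we get $|\Gamma|=\kappa$, and by discarding the base point from $\Gamma$ if necessary we may also assume $0\notin\Gamma$, so that each $f_\gamma$ vanishes at the base point and the induced map into $\ell_p(\Gamma)$ is base-point preserving. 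For $x\in\Gamma$ either $y_x\notin D\supseteq\Gamma$, or else $g(x)=y_x\neq x$ carries a color different from that of $x$; in both cases $y_x\notin\Gamma$, exactly as required.

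With $(x_\gamma)_{\gamma\in\Gamma}$ enumerating $\Gamma$, with $y_\gamma=y_{x_\gamma}$, with $f_\gamma=r_{x_\gamma}\mathbf{1}_{\{x_\gamma\}}$ (so $C=1$) and with $t=\lambda$, every hypothesis of Lemma~\ref{lemma:condition2} is met, and it yields that $\ell_p(\Gamma)=\ell_p(\kappa)$ is $2^{1/p}\lambda$-complemented in $\mathcal{F}_p(\mathcal{M})$. Given any $C>2^{1/p}$, choosing $\lambda=C\,2^{-1/p}>1$ completes the proof. I expect the combinatorial selection of $\Gamma$ to be the only genuine obstacle: the Lipschitz bound and the algebraic conditions drop out of the indicator construction immediately, whereas guaranteeing that $\kappa$-many isolated points can be simultaneously equipped with near-neighbors outside the chosen set is what forces the passage through a $3$-coloring, an odd nearest-neighbor cycle showing that two colors do not suffice.
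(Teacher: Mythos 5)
Your proposal is correct, and its core is exactly the paper's: the paper also takes the scaled indicators $f_i=d(x_i,\mathcal{M}\setminus\{x_i\})\,\chi_{\{x_i\}}$, picks $y_i$ with $d(x_i,y_i)\le t\,d(x_i,\mathcal{M}\setminus\{x_i\})$ for an arbitrary $t>1$, and feeds these data into Lemma~\ref{lemma:condition2}, letting $t\to1$ to get every $C>2^{1/p}$. Where you genuinely depart is the selection step, and there you have caught a real oversight: the paper applies the lemma to \emph{all} the isolated points with an \emph{arbitrary} choice of the $y_i$ and never verifies the hypothesis $f_{\gamma_1}(y_{\gamma_2})=0$ (unlike the proof of Theorem~\ref{thm:ellPComplementedMetricCase}, where this is checked explicitly). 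That hypothesis can fail for every admissible choice of near-neighbors --- e.g.\ in $\mathcal{M}=\mathbb{N}$ every $y_i$ is itself an isolated point --- and the failure is fatal, not cosmetic: if $x_1$ and $x_2$ are chosen as each other's neighbors, then in the notation of the lemma $b_2=-b_1$, so $S$ is not bounded below and $P\circ S\neq \mathrm{Id}$. Your repair --- encoding the conflicts as a fixed-point-free functional graph, noting that its finite subgraphs have at most one cycle per component and are therefore $3$-colorable, and invoking De Bruijn--Erd\H{o}s to extract a color class $\Gamma$ with $g(\Gamma)\cap\Gamma=\emptyset$ --- is precisely the thinning the published argument silently needs (a transfinite recursion discarding each chosen point's neighbor would serve as well), and your handling of the base point is likewise more careful than the paper's. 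So your write-up is, at this step, more complete than the paper's own proof.

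One caveat: your count $|\Gamma|=\kappa$ uses that $\kappa$ is infinite, an assumption absent from the statement; for finite $\kappa$ a largest color class only has size about $\kappa/3$, so your argument does not recover the finite case. This is a defensible reading rather than a gap on your side --- for finite $\mathcal{M}$ the statement is actually false as written, since a two-point space has $\kappa=2$ while $\mathcal{F}_p(\mathcal{M})$ is one-dimensional, and only the infinite case is used elsewhere (Theorem~\ref{thm:mainComplementedEllp}) --- but you should state explicitly that you are assuming $\kappa$ infinite.
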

\begin{proof}
Let us enumerate the set $\{x\colon d(x,\mathcal{M}\setminus \{x\})>0
\}$ as $\{x_{i}\colon i<\kappa \}$. Now we consider the map $f_{i}:=d(x
_{i},\mathcal{M}\setminus \{x_{i}\})\cdot \chi _{\{x_{i}\}}$ and, for
$t>1$ fixed, we let $y_{i}\in \mathcal{M}$ be an arbitrary point with
$d(x_{i},y_{i})\leq td(x_{i},\mathcal{M}\setminus \{x_{i}\})$. Each
$f_{i}$ is 1-Lipschitz and
\begin{equation*}
\frac{f_{i}(x_{i})}{d(x_{i},y_{i})} \geq \frac{1}{t}.
\end{equation*}
Hence, by Lemma~\ref{lemma:condition2}, $\ell _{p}(\kappa)$ is
$2^{1/p}t$-complemented in $\mathcal{F}_{p}(\mathcal{M})$. Since $t>1$
was arbitrary, this finishes the proof.
\end{proof}

Another case when non-trivial real-valued Lipschitz functions are available is when they are defined on metric spaces.
Indeed, if $(\mathcal{M},d)$ is a metric space then the map
$x\mapsto d(x,y)$ is $1$-Lipschitz for every $y\in \mathcal{M}$. This simple
fact is one the foundations of the following result.

\begin{Theorem}
\label{thm:ellPComplementedMetricCase}
Let $(\mathcal{M},d)$ be an infinite metric space and let $p\in (0,1]$. Then
for every $C>2^{1/p}$, the space $\ell _{p}(
\operatorname{dens} \mathcal{M})$ is $C$-complemented
in $\mathcal{F}_{p}(\mathcal{M})$.
\end{Theorem}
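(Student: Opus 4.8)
The plan is to reduce Theorem~\ref{thm:ellPComplementedMetricCase} to an application of Lemma~\ref{lemma:condition2}, which already packages the hard work of producing a projection onto a copy of $\ell_p$. To invoke that lemma I must manufacture, out of the metric structure alone, an index set $\Gamma$ of cardinality $\operatorname{dens}\mathcal{M}$ together with points $(x_\gamma)$, $(y_\gamma)$ and $C$-Lipschitz functions $(f_\gamma)$ satisfying the separation-of-supports and biorthogonality-type conditions there. The natural source of $1$-Lipschitz functions on a metric space is the distance function $x\mapsto d(x,y)$, so the whole game is to choose a good family of ``centers'' and cut off the distance functions to guarantee disjoint supports.

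First I would handle the cardinality bookkeeping. Set $\kappa=\operatorname{dens}\mathcal{M}$; since $\mathcal{M}$ is infinite we have $\kappa$ infinite, and $|\mathcal{M}|$ and $\kappa$ satisfy the usual relations for metric spaces. If $\mathcal{M}$ has $\kappa$-many isolated points we are already done by Theorem~\ref{thm:discretePoints}, so I may assume the set of isolated points has cardinality $<\kappa$ and hence that $\mathcal{M}$ has a subset of accumulation points of full density character. The goal is then to extract, by transfinite recursion, a family $(x_\gamma)_{\gamma<\kappa}$ of points together with radii $r_\gamma>0$ such that the balls (or the chosen ``witness'' points $y_\gamma$ near $x_\gamma$) are arranged so that a bump function supported near $x_\gamma$ vanishes at every other $x_{\gamma'}$ and at every $y_{\gamma'}$.

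The concrete construction I would carry out: at stage $\gamma$, having chosen $\{x_\delta:\delta<\gamma\}$, use that the already-chosen set has density character $<\kappa$ (so is not dense) to pick a new point $x_\gamma$ together with a genuinely nearer point $y_\gamma$ (possible because we are at an accumulation point), and a radius $r_\gamma>0$ small enough that the ball $B(x_\gamma,r_\gamma)$ is disjoint from the previously selected points and their witnesses. Then define
\begin{equation*}
f_\gamma(x)=\max\{\,r_\gamma-d(x,x_\gamma),\,0\,\},
\end{equation*}
which is $1$-Lipschitz, strictly positive at $x_\gamma$, supported in $B(x_\gamma,r_\gamma)$, and can be arranged to vanish at $x_{\gamma'}$ for $\gamma'\neq\gamma$ and at every $y_{\gamma'}$. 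By shrinking $y_\gamma$ toward $x_\gamma$ I can make $d(x_\gamma,y_\gamma)$ as small as I like relative to $f_\gamma(x_\gamma)=r_\gamma$, so for any prescribed $t>1$ the quotient $f_\gamma(x_\gamma)/d(x_\gamma,y_\gamma)\ge 1/t$ holds. Feeding $\Gamma=\kappa$, $C=1$, and this $t$ into Lemma~\ref{lemma:condition2} yields that $\ell_p(\kappa)$ is $2^{1/p}t$-complemented in $\mathcal{F}_p(\mathcal{M})$, and letting $t\downarrow 1$ covers every $C>2^{1/p}$.

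The main obstacle is the recursive selection in the fully general (possibly nonseparable) setting: I must guarantee simultaneously that (i) the new support ball avoids all earlier centers and witnesses—not just finitely many—and (ii) that the process runs for all $\gamma<\kappa$ without exhausting available points prematurely. The cleanest way around this is to first reduce to the two clean cases by a dichotomy on isolated points (disposing of the large-isolated-set case via Theorem~\ref{thm:discretePoints}), and otherwise to work with a maximal $\varepsilon$-separated family at suitable scales so that the balls are automatically disjoint and a cardinality count guarantees one obtains $\kappa$-many of them. The separation condition $f_{\gamma_1}(x_{\gamma_2})=0$ for $\gamma_1\neq\gamma_2$ is then immediate from disjointness of supports, and the only remaining delicate point is ensuring $y_\gamma$ also avoids the other supports, which I secure by choosing $y_\gamma$ inside $B(x_\gamma,r_\gamma)$ itself.
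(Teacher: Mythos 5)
Your route is the paper's route: dispose of the case of $(\operatorname{dens}\mathcal{M})$-many isolated points via Theorem~\ref{thm:discretePoints}, select $(\operatorname{dens}\mathcal{M})$-many pairwise disjoint balls with non-isolated centers (the paper simply cites this as a known fact, \cite{handbookCardinalFunctions}*{Theorem 8.1}, which spares your transfinite recursion; your fallback via maximal separated families is a legitimate substitute), build truncated distance functions, and invoke Lemma~\ref{lemma:condition2}. However, there is a genuine flaw at the decisive step. Lemma~\ref{lemma:condition2} requires $f_{\gamma_1}(y_{\gamma_2})=0$ for \emph{every} pair $\gamma_1,\gamma_2\in\Gamma$, \emph{including} $\gamma_1=\gamma_2$, and your configuration violates exactly this diagonal case: with $f_\gamma=\max\{r_\gamma-d(\cdot,x_\gamma),0\}$ and $y_\gamma$ chosen strictly inside $B(x_\gamma,r_\gamma)$ you get $f_\gamma(y_\gamma)=r_\gamma-d(x_\gamma,y_\gamma)>0$. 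This condition is not cosmetic. In the lemma's proof it is what yields $P(b_\gamma)=\mathbf{e}_\gamma$; with your data one computes instead $P(b_\gamma)=\bigl(d(x_\gamma,y_\gamma)/r_\gamma\bigr)\mathbf{e}_\gamma$, so $P\circ S$ is a diagonal operator rather than the identity. Worse, your proposed remedy for the ratio condition --- shrinking $y_\gamma$ toward $x_\gamma$ --- drives these diagonal entries to $0$: renormalizing $S$ restores $P\circ S=\operatorname{Id}_{\ell_p(\Gamma)}$ only at the cost of $\Vert S\Vert\to\infty$, so the complementation constant degenerates precisely as you take $t\downarrow 1$. (The shrinking is also unnecessary for the ratio: any $y_\gamma\in B(x_\gamma,r_\gamma)$ already gives $f_\gamma(x_\gamma)/d(x_\gamma,y_\gamma)=r_\gamma/d(x_\gamma,y_\gamma)>1$.)

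The repair is one line and is exactly what the paper does: set the cutoff radius equal to the distance to the witness, i.e., define $f_\gamma(x)=\max\{d(x_\gamma,y_\gamma)-d(x,x_\gamma),0\}$. Then $\operatorname{supp}_0(f_\gamma)=B\bigl(x_\gamma,d(x_\gamma,y_\gamma)\bigr)\subset B(x_\gamma,r_\gamma)$, so the supports remain pairwise disjoint and every $x_{\gamma'}$, $y_{\gamma'}$ with $\gamma'\neq\gamma$ is avoided; the witness $y_\gamma$ now sits on the boundary of its own support, so $f_\gamma(y_\gamma)=0$ as the lemma demands; and $f_\gamma(x_\gamma)=d(x_\gamma,y_\gamma)$ makes the ratio exactly $1$. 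Lemma~\ref{lemma:condition2} then applies with $C=1$ and $t=1$, giving that $\ell_p(\operatorname{dens}\mathcal{M})$ is even $2^{1/p}$-complemented --- stronger than the stated bound, with no limiting argument in $t$ needed.
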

\begin{proof}
It is well-known and not very difficult to prove (for a reference see
e.g. \cite{handbookCardinalFunctions}*{Theorem 8.1}) that in any
metric space $\mathcal{M}$ we may find $(\operatorname{dens} \mathcal{M})$-many disjoint balls. By
Theorem~\ref{thm:discretePoints}, we may assume that there do not exist
$(\operatorname{dens} \mathcal{M})$-many isolated
points in $\mathcal{M}$. So we may pick non-isolated points
$(x_{i})_{i<\operatorname{dens} \mathcal{M}}$ in
$\mathcal{M}$ and positive numbers $(r_{i})_{i<
\operatorname{dens} \mathcal{M}}$ such that the balls from the set
$\{B(x_{i},r_{i})\colon i<\operatorname{dens}
\mathcal{M}\}$ are pairwise disjoint. For each $i<\kappa $ we pick
$y_{i}\in B(x_{i},r_{i})\setminus \{x_{i}\}$ and define the
$1$-Lipschitz function $f_{i}\colon \mathcal{M}\to \mathbb{R}$ by
\begin{equation*}
f_{i}(x)=\max \{d(x_{i},y_{i})-d(x,x_{i}),0\}, \quad x\in \mathcal{M}.
\end{equation*}
Then $\operatorname{\mathrm{{supp}}}_{0}f_{i} = B(x_{i},d(x_{i},y_{i}))
\subset B(x_{i},r_{i})$. Since these sets are pairwise disjoint we
obviously have $f_{i}( y_{j})=0$ for $i$, $j <
\operatorname{dens} \mathcal{M}$. Finally, we have
$f_{i}(x_{i}) = d(x_{i},y_{i})$ and so the assumptions of
Lemma~\ref{lemma:condition2} are satisfied with $t=1$. Thus we obtain
that the space $\ell _{p}(\operatorname{dens}
\mathcal{M})$ is even $2^{1/p}$-complemented in $\mathcal{F}_{p}(
\mathcal{M})$.
\end{proof}

\begin{proof}[Proof of Theorem~\ref{thm:mainComplementedEllp}]
Just combine Theorems~\ref{thm:discretePoints} and \ref{thm:ellPComplementedMetricCase}.
\end{proof}

Uniformly separated $p$-metric spaces also admit non-trivial real-val\-ued Lipschitz
functions. Indeed, if $(\mathcal{M},d)$ is $r$-separated, then for every
$y\in \mathcal{M}$ the map
$x\mapsto d^{p}(x,y)$ is Lipschitz, with constant $r^{p-1}$.

\begin{Lemma}
\label{lem:condition3}
Let $(\mathcal{M},d)$ be a pointed $p$-metric space, $p\in (0,1]$.
Suppose there are sequences $(x_{\gamma })_{\gamma \in \Gamma }$ in
$\mathcal{M}$ and $(r_{\gamma })_{\gamma \in \Gamma }$ in $(0,+\infty
)$ such that 
\begin{equation}
\label{cond4:ellPComplemented}
\frac{r_{\gamma }}{d(x_{\gamma },0)}\geq \frac{1}{t},\quad \gamma
\in \Gamma,
\end{equation}
for some $t>0$. With the convention that $r_{0}=0$ and $x_{0}=0$, assume further that:
\begin{enumerate}[(a)]\item[(a)] either $d$ is a metric on $\mathcal{M}$ and
\begin{equation}
\label{cond3:ellPComplemented}
d(x_{\gamma _{1}},x_{\gamma _{2}})\geq r_{\gamma _{1}} + r_{\gamma _{2}},
\quad \gamma _{1},\gamma _{2}\in \Gamma \cup \{0\},\, \gamma _{1}\neq
\gamma _{2},
\end{equation}\item[(b)] or $(\mathcal{M},d)$ is uniformly separated and
\begin{equation}
\label{cond5:ellPComplemented}
d^{p}(x_{\gamma _{1}},x_{\gamma _{2}})\geq r_{\gamma _{1}} + r_{\gamma
_{2}}, \quad \gamma _{1},\gamma _{2}\in \Gamma \cup \{0\},\, \gamma _{1}
\neq \gamma _{2}.
\end{equation}
\end{enumerate}

Then, if we set $\mathcal{N}= \{x_{\gamma }\colon \gamma \in \Gamma
\}\cup \{0\}$, we have:
\begin{enumerate}[(iii)]\item[(i)] $\mathcal{F}_{p}(\mathcal{N})\simeq \ell _{p}(\Gamma )$;
\item[(ii)] $L_{\jmath }\colon \mathcal{F}_{p}(\mathcal{N})\to
\mathcal{F}_{p}(\mathcal{M})$ is an isomorphic embedding; and
\item[(iii)] $L_{\jmath }(\mathcal{F}_{p}(\mathcal{N}))$ is complemented
in $\mathcal{F}_{p}(\mathcal{M})$.
\end{enumerate}
\end{Lemma}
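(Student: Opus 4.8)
The plan is to derive (i)--(iii) directly from the ``Moreover'' part of Lemma~\ref{lemma:condition2}, which applies as soon as we take $y_{\gamma}=0$ for every $\gamma\in\Gamma$ and supply a uniformly Lipschitz family $(f_{\gamma})_{\gamma\in\Gamma}$ of real-valued functions on $\mathcal{M}$ with pairwise disjoint supports such that $f_{\gamma}(x_{\gamma})\neq 0$, $f_{\gamma_{1}}(x_{\gamma_{2}})=0$ for $\gamma_{1}\neq\gamma_{2}$, $f_{\gamma}(0)=0$, and $f_{\gamma}(x_{\gamma})/d(x_{\gamma},0)\geq 1/t$. Thus the entire argument reduces to writing down the right ``bump'' functions and checking these hypotheses against \eqref{cond4:ellPComplemented} and \eqref{cond3:ellPComplemented}/\eqref{cond5:ellPComplemented}.

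In case (a) I would set
\begin{equation*}
f_{\gamma}(x)=\max\{r_{\gamma}-d(x,x_{\gamma}),0\},\qquad x\in\mathcal{M},
\end{equation*}
which is $1$-Lipschitz because $x\mapsto d(x,x_{\gamma})$ is $1$-Lipschitz in a metric space and truncation does not increase the Lipschitz constant. Then $\operatorname{\mathrm{{supp}}}_{0}(f_{\gamma})$ is the open ball $B(x_{\gamma},r_{\gamma})$ and $f_{\gamma}(x_{\gamma})=r_{\gamma}>0$. Condition \eqref{cond3:ellPComplemented} makes the balls $B(x_{\gamma},r_{\gamma})$ pairwise disjoint (by the triangle inequality) and simultaneously forces $f_{\gamma_{1}}(x_{\gamma_{2}})=0$ for $\gamma_{1}\neq\gamma_{2}$; taking $\gamma_{2}=0$ (recall $x_{0}=0$, $r_{0}=0$) also gives $f_{\gamma}(0)=0$. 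Together with \eqref{cond4:ellPComplemented}, every hypothesis of Lemma~\ref{lemma:condition2} holds with $C=1$.

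In case (b) I would instead replace $d$ by its $p$-th power and set
\begin{equation*}
f_{\gamma}(x)=\max\{r_{\gamma}-d^{p}(x,x_{\gamma}),0\}.
\end{equation*}
The support, separation and vanishing conditions are verified exactly as before, now reading \eqref{cond5:ellPComplemented} in place of \eqref{cond3:ellPComplemented} and using that $d^{p}$ satisfies the ordinary triangle inequality (this is precisely the $p$-triangle law for $d$). The step I expect to be the crux is the Lipschitz estimate, since $x\mapsto d^{p}(x,x_{\gamma})$ is \emph{not} $1$-Lipschitz for $p<1$; this is exactly where uniform separation enters. Writing $r=\inf\{d(x,y):x\neq y\}>0$, the triangle inequality for $d^{p}$ gives $|d^{p}(x,x_{\gamma})-d^{p}(z,x_{\gamma})|\leq d^{p}(x,z)\leq r^{p-1}d(x,z)$, where the last inequality uses $p-1\leq 0$ together with $d(x,z)\geq r$ whenever $x\neq z$. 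Hence each $f_{\gamma}$ is $r^{p-1}$-Lipschitz, and Lemma~\ref{lemma:condition2} applies with $C=r^{p-1}$, yielding (i)--(iii).
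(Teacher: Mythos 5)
Your proof is correct and follows essentially the same route as the paper: the same bump functions $\max\{r_{\gamma}-d(x,x_{\gamma}),0\}$ in case (a) and $\max\{r_{\gamma}-d^{p}(x,x_{\gamma}),0\}$ in case (b), verified against \eqref{cond3:ellPComplemented}/\eqref{cond5:ellPComplemented} and fed into the ``Moreover'' part of Lemma~\ref{lemma:condition2} with $y_{\gamma}=0$. Your explicit justification of the $r^{p-1}$-Lipschitz bound via the triangle inequality for $d^{p}$ and uniform separation is exactly the observation the paper records just before the lemma.
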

\begin{proof} If (a) holds,
we consider functions $f_{\gamma }:\mathcal{M}\to \mathbb{R}$ given by
\begin{equation*}
f_{\gamma }(x)=\max \{r_{\gamma }-d(x,x_{\gamma }),0\}, \quad x\in
\mathcal{M},
\end{equation*}
whereas if (b) holds, we consider  maps $f_{\gamma }:\mathcal{M}\to \mathbb{R}$  defined by
\begin{equation*}
f_{\gamma }(x)=\max \{r_{\gamma }-d^{p}(x,x_{\gamma }),0\}, \quad x
\in \mathcal{M}.
\end{equation*}
Those functions are $1$-Lipschitz if
$\mathcal{M}$ is metric, and $r^{p-1}$-Lipschitz if $\mathcal{M}$ is
$r$-separated, respectively. It follows either from
\eqref{cond3:ellPComplemented} or \eqref{cond5:ellPComplemented} that $
\operatorname{\mathrm{{supp}}}_{0}f_{\gamma }$ are pairwise disjoint
sets and that $f_{\gamma }(0)=0$ for every $\gamma \in \Gamma $.
Finally, we have $f_{\gamma }(x_{\gamma }) = r_{\gamma }\geq t^{-1} d(x
_{\gamma },0)$. Thus, an application of the ``Moreover'' part of
Lemma~\ref{lemma:condition2} finishes the proof.
\end{proof}

If $p<1$ and conditions~\eqref{cond4:ellPComplemented} and \eqref{cond5:ellPComplemented} hold, then
$d(0,x_{\gamma })\le t^{1/(1-p)}$ for every $\gamma \in \Gamma $. So,
if $\mathcal{M}$ is not a metric space, Lemma~\ref{lem:condition3} only
applies when $\mathcal{N}$ is bounded and $\mathcal{M}$ is uniformly
separated. Conversely, if $\mathcal{N}$ is uniformly separated and
bounded and $\mathcal{M}$ is metric (respectively, uniformly separated)
then, if $0$ is an arbitrary point of $\mathcal{N}$ and we set
\begin{equation*}
s=\inf \{ d(x,y) \colon x,y\in \mathcal{N},\, x\not =y\}, \quad R=
\sup \{ d(0,x) \colon x\in \mathcal{N}\},
\end{equation*}
the conditions of Lemma~\ref{lem:condition3} hold with $\Gamma =
\mathcal{N}\setminus \{0\}$, $r_{x}=s/2$ (resp. $r_{x}=s^{p}/2$) for
every $x\in \mathcal{N}\setminus \{0\}$ and $t=2R{s}^{-1}$ (respectively,
$t=2R{s}^{-p}$). This observation immediately yields the following
result.

\begin{Lemma}
\label{lem:bigSeparatedBoundedSubset}Suppose that $(\mathcal{M},d)$ is either a metric space or a uniformly
separated $p$-metric space, $p\in (0,1]$. If $\mathcal{N}\subset \mathcal{M}$ is
uniformly separated and bounded then:
\begin{enumerate}[(iii)]\item[(i)]
$\mathcal{F}_{p}(\mathcal{N})\simeq \ell _{p}(|\mathcal{N}|-1)$;
\item[(ii)] $L_{\jmath }\colon \mathcal{F}_{p}(\mathcal{N})\to
\mathcal{F}_{p}(\mathcal{M})$ is an isomorphic embedding; and
\item[(iii)]$L_{\jmath }(\mathcal{F}_{p}(\mathcal{N}))$ is complemented
in $\mathcal{F}_{p}(\mathcal{M})$.
\end{enumerate}
\end{Lemma}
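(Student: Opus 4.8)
The plan is to read off the statement from Lemma~\ref{lem:condition3} by feeding it the parameters already flagged in the paragraph that precedes the lemma; all of the analytic content is packaged there (and, through it, in the ``Moreover'' part of Lemma~\ref{lemma:condition2}), so the task is purely one of verifying hypotheses. Since the isomorphism type of a Lipschitz free $p$-space is insensitive to the choice of base point, I would first declare the base point $0$ of $\mathcal{M}$ to be an arbitrary point of $\mathcal{N}$, so that the inclusion $\jmath\colon\mathcal{N}\to\mathcal{M}$ preserves base points and $L_{\jmath}$ is genuinely the canonical map. Setting $\Gamma=\mathcal{N}\setminus\{0\}$ and $x_{\gamma}=\gamma$ for $\gamma\in\Gamma$, with the convention $x_{0}=0$, realizes $\mathcal{N}=\{x_{\gamma}\colon\gamma\in\Gamma\}\cup\{0\}$ exactly as required. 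Because $\mathcal{N}$ is uniformly separated and bounded, the quantities $s=\inf\{d(x,y)\colon x,y\in\mathcal{N},\,x\neq y\}$ and $R=\sup\{d(0,x)\colon x\in\mathcal{N}\}$ satisfy $s>0$ and $R<\infty$, which is what makes the constants below finite and positive.

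Next I would treat the two hypotheses separately, checking the quantitative conditions of Lemma~\ref{lem:condition3} in each. If $\mathcal{M}$ is a metric space I take $r_{\gamma}=s/2$ and $t=2R/s$: then $d(x_{\gamma},0)\leq R$ gives $r_{\gamma}/d(x_{\gamma},0)\geq(s/2)/R=1/t$, which is \eqref{cond4:ellPComplemented}, while for distinct $\gamma_{1},\gamma_{2}\in\Gamma\cup\{0\}$ uniform separation yields $d(x_{\gamma_{1}},x_{\gamma_{2}})\geq s=r_{\gamma_{1}}+r_{\gamma_{2}}$ (using $r_{0}=0$ when an index is $0$), which is \eqref{cond3:ellPComplemented}. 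If instead $\mathcal{M}$ is a uniformly separated $p$-metric space I keep the same scheme but raise the separation to the $p$-th power, taking $r_{\gamma}=s^{p}/2$ and $t=2R/s^{p}$: now $r_{\gamma}/d(x_{\gamma},0)\geq(s^{p}/2)/R=1/t$ supplies \eqref{cond4:ellPComplemented}, and $d^{p}(x_{\gamma_{1}},x_{\gamma_{2}})\geq s^{p}=r_{\gamma_{1}}+r_{\gamma_{2}}$ supplies \eqref{cond5:ellPComplemented}.

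With the hypotheses of Lemma~\ref{lem:condition3} verified in either case, its conclusion delivers (i), (ii) and (iii) for $\mathcal{N}$ at once; and since $|\Gamma|=|\mathcal{N}|-1$, part (i) reads $\mathcal{F}_{p}(\mathcal{N})\simeq\ell_{p}(|\mathcal{N}|-1)$, as claimed. I do not expect a genuine obstacle: the only points requiring care are the bookkeeping around the base-point convention $r_{0}=0$, $x_{0}=0$ and the correct powers of $s$ in the two cases. The one mild subtlety worth flagging is that the ratio condition \eqref{cond4:ellPComplemented} must hold \emph{uniformly} over all of $\Gamma$, which is precisely where the boundedness of $\mathcal{N}$ (finiteness of $R$) enters, and why the choice of $t$ depends on $R$.
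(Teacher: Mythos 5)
Your proposal is correct and coincides with the paper's own argument: the authors prove this lemma exactly by applying Lemma~\ref{lem:condition3} with base point an arbitrary point $0\in\mathcal{N}$, $\Gamma=\mathcal{N}\setminus\{0\}$, $r_{\gamma}=s/2$ and $t=2Rs^{-1}$ in the metric case, and $r_{\gamma}=s^{p}/2$ and $t=2Rs^{-p}$ in the uniformly separated case. Your verification of \eqref{cond4:ellPComplemented} together with \eqref{cond3:ellPComplemented}, respectively \eqref{cond5:ellPComplemented}, matches the paper's computation, so nothing further is needed.
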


We are almost ready to prove the second main result of this section.
Prior to do it, we write down an easy lemma which will be used several
times throughout the paper.
\begin{Lemma}
\label{lem:ToninIdea}Let $(\mathcal{M},d)$ be a $p$-metric space, $p\in (0,1]$. Suppose that  either $\mathcal M$ is unbounded or
its completion contains a limit point. Then for every $t>1$ there are
$(x_{n})_{n=1}^{\infty }$ in $\mathcal{M}$, $x_{0}$ in the completion
of $\mathcal{M}$, and a monotone sequence $(r_{n})_{n=1}^{\infty }$ in
$(0,\infty )$ such that
\begin{equation*}
d^{p}(x_{n},x_{m})\ge r_{n}+r_{m}\quad\text{and}\quad\; \frac{|r_{n} - r_{m}|}{d^{p}(x
_{n},x_{m})}\geq \frac{1}{t},
\end{equation*}
for all $m$, $n\in \mathbb{N}\cup \{0\}$, $m\not =n$,  with the convention that $r_{0}=0$.
\end{Lemma}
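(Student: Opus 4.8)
The plan is to reduce both hypotheses to a single construction. In either case I will produce a point $x_0$ (lying in $\mathcal{M}$ in the unbounded case, and in the completion otherwise) together with a sequence $(x_n)_{n=1}^\infty$ in $\mathcal{M}\setminus\{x_0\}$ for which the quantities $a_n:=d^p(x_n,x_0)$ form a strictly monotone sequence of positive reals whose consecutive terms differ by a fixed multiplicative factor. The whole point is that the snowflaked distance $d^p$ is an honest metric, so the forward and reverse triangle inequalities give, for all $m\neq n$ (with the convention $a_0=0$),
\[
|a_n-a_m|\le d^p(x_n,x_m)\le a_n+a_m .
\]
These two bounds are exactly what is needed to control $d^p(x_n,x_m)$ from below (for the first required inequality) and from above (for the second).

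Next I will calibrate the constants. Given $t>1$, I will fix $c\in(1/t,1)$, which is possible precisely because $t>1$, and set $r_n:=c\,a_n$, so that $(r_n)$ inherits the monotonicity of $(a_n)$ and lands in $(0,\infty)$. Writing $\rho:=\max\{(1+c)/(1-c),\,(tc+1)/(tc-1)\}$, which is finite since $c<1$ and $tc>1$, the elementary estimates $\tfrac{a-b}{a+b}\ge\tfrac{\rho-1}{\rho+1}$ and $\tfrac{a+b}{a-b}\le\tfrac{\rho+1}{\rho-1}$, valid whenever $a/b\ge\rho$, combine with the displayed triangle bounds to yield $d^p(x_n,x_m)\ge r_n+r_m$ and $d^p(x_n,x_m)\le t|r_n-r_m|$. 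The instances $m=0$ reduce to $c\le 1$ and $c\ge 1/t$ and so hold by the choice of $c$. Thus it suffices to arrange that the ratio of the larger to the smaller of $a_n,a_m$ is at least $\rho$ for every pair $m\neq n$ with $m,n\ge 1$; by telescoping this reduces to the consecutive ratios, so I only need $a_n/a_{n-1}\ge\rho$ (respectively $a_{n-1}/a_n\ge\rho$) along the sequence.

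It remains to carry out the two constructions. If $\mathcal{M}$ is unbounded, I pick $x_0\in\mathcal{M}$ arbitrarily and choose $x_n$ inductively with $d^p(x_n,x_0)\ge\rho\,a_{n-1}$, which is possible since distances to $x_0$ are unbounded; then $(a_n)$ increases to $\infty$ with consecutive ratio at least $\rho$. If instead the completion contains a limit point $x_0$, then by density of $\mathcal{M}$ there is a sequence in $\mathcal{M}\setminus\{x_0\}$ converging to $x_0$, so $d^p(\cdot,x_0)$ takes arbitrarily small positive values; passing to a subsequence I obtain $(x_n)$ with $a_n\downarrow 0$ and $a_{n-1}/a_n\ge\rho$. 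In both situations the sequence $(r_n)=(c\,a_n)$ is monotone and, by the previous paragraph, satisfies the two required inequalities.

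The only genuinely delicate point is the uniformity of the constant $\rho$ across all pairs rather than just consecutive ones; this is exactly what the geometric growth or decay buys, via the telescoping $a_n/a_m=\prod_{k=m+1}^{n} a_k/a_{k-1}\ge\rho^{\,n-m}\ge\rho$. The remaining bookkeeping, namely verifying the $m=0$ cases and checking that $d^p$ still obeys the triangle inequality on the completion, is routine.
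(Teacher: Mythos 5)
Your proposal is correct and follows essentially the same route as the paper: snowflake to the metric $d^p$, build a sequence whose distances $a_n=d^p(x_n,x_0)$ to a fixed point $x_0$ (chosen in $\mathcal{M}$ if unbounded, in the completion otherwise) grow or decay geometrically, set $r_n$ proportional to $a_n$, and conclude via the triangle and reverse triangle inequalities for $d^p$. The paper's calibration $r_n=t^{-1/2}a_n$ with $(1+s)/(1-s)=\sqrt{t}$ is exactly the balanced instance $c=t^{-1/2}$ of your parametrization, at which your two constraints $(1+c)/(1-c)$ and $(tc+1)/(tc-1)$ coincide with value $(\sqrt{t}+1)/(\sqrt{t}-1)=1/s$.
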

\begin{proof}
Choose $0<s<1$ such that $ \sqrt{t}=(1+s)/(1-s)$. In the case when
$\mathcal{M}$ is unbounded, if $x_{0}$ is an arbitrary point of
$\mathcal{M}$ there is $(x_{n})_{n=1}^{\infty }$ such that
\begin{equation*}
\sup _{n\in \mathbb{N}} \frac{d(x_{n},x_{0})}{d(x_{n+1},x_{0})} <s^{1/p}.
\end{equation*}
In the case when there is a limit point $x_{0}$ in the completion of
$\mathcal{M}$, we pick a sequence $(x_{n})_{n=1}^{\infty }$ in
$\mathcal{M}$ such that
\begin{equation*}
\sup _{n\in \mathbb{N}} \frac{d(x_{n+1},x_{0})}{d(x_{n},x_{0})} <s^{1/p}.
\end{equation*}
In both cases, if we set $r_{n}=t^{-1/2} d^{p} (x_{n},x_{0})$, we have
\begin{equation*}
r_{n}+r_{m} \le \sqrt{t} |r_{n}-r_{m}|, \quad m,n\in \mathbb{N}
\cup \{0\}, \, m\not =n.
\end{equation*}
Therefore, if $m$, $n\in \mathbb{N}\cup \{0\}$ are such that
$n\not =m$,
\begin{equation*}
d^{p}(x_{n},x_{m})
\ge | d^{p}(x_{n},x_{0})-d^{p}(x_{m},x_{0}) | =
\sqrt{t} |r_{n} -r_{m}| \ge r_{n}+r_{m},
\end{equation*}
and
\begin{equation*}
d^{p}(x_{n},x_{m})
\le d^{p}(x_{n},x_{0})+d^{p}(x_{m},x_{0}) =
\sqrt{t} (r_{n} +r_{m}) \le t |r_{n}-r_{m}|.\qedhere
\end{equation*}
\end{proof}

\begin{proof}[Proof of Theorem~\ref{thm:mainComplementedEllp2}]
Assume that $\operatorname{dens} \mathcal{M}$ is uncountable. For each $n\in \mathbb{N}$, let
$\mathcal{M}_{n}$ be a maximal $(1/n)$-separated set in $B(0,n)$. Then
$|\mathcal{M}_{n}|\le \operatorname{dens}
\mathcal{M}$ for every $n\in \mathbb{N}$, and $\mathcal{N}:=\cup _{n=1}
^{\infty }\mathcal{M}_{n}$ is dense in $\mathcal{M}$. Therefore, there
exists $n\in \mathbb{N}$ with $\mathcal{M}_{n}$ infinite. An application
of Lemma~\ref{lem:bigSeparatedBoundedSubset} yields the theorem under the assumption (b). Now, in order
to prove the result under the assumption (a), by Lemma~\ref{lem:bigSeparatedBoundedSubset}, we may assume
that $\mathcal{M}$ does not contain an infinite uniformly separated
bounded subset. We infer that either $\mathcal{M}$ is unbounded or the
completion of $\mathcal{M}$ has a limit point. Then, given $t>1$, we use
Lemma~\ref{lem:ToninIdea} to pick $(x_{n})_{n=1}^{\infty }$ in
$\mathcal{M}$, $(r_{n})_{n=1}^{\infty }$ in $(0,\infty )$ and
$x_{0}$ in the completion of $\mathcal{M}$ such that, if we choose
$x_{0}$ as the base point of $\mathcal{M}\cup \{x_{0}\}$,
\eqref{cond4:ellPComplemented} and \eqref{cond3:ellPComplemented} hold
for $\Gamma =\mathbb{N}$. This way, if $x_{0}\in \mathcal{M}$ the result
follows from Lemma~\ref{lem:condition3}. If $x_{0}\notin \mathcal{M}$
the result follows from Lemma~\ref{lem:condition3} in combination with
\cite{AACD2018}*{Proposition 4.17}.
\end{proof}

Let us note that we do not know whether
Theorem~\ref{thm:mainComplementedEllp2} holds for separable uniformly
separated $p$-metric spaces with $p<1$.
\begin{Question}
Suppose that $(\mathcal{M},d)$ is an infinite countable
uniformly separated $p$-metric space, $p\in (0,1)$. Does there exist $\mathcal{N}
\subset \mathcal{M}$ such that $\mathcal{F}_{p}(\mathcal{N})\simeq
\ell _{p}$ and $\mathcal{F}_{p}(\mathcal{N})$ is complemented in
$\mathcal{F}_{p}(\mathcal{M})$?
\end{Question}

Notice that in the case when $\operatorname{dens}
\mathcal{M}$ has uncountable cofinality, the proof of
Theorem~\ref{thm:mainComplementedEllp2} yields a subspace $
\mathcal{N}$ of $\mathcal{M}$ such that $\mathcal{F}_{p}(\mathcal{N})
\simeq \ell _{p}(\operatorname{dens} \mathcal{M})$ is
complemented in $\mathcal{F}_{p}(\mathcal{M})$. However, we do not know
if this result holds in general.

\section{Embeddability of $\ell _{p}$ in $\mathcal{F}_{p}(\mathcal{M})$ for $0<p\le 1$}\label{Sect3}
\noindent
The aim of this section is to exhibit a method (specifically tailored
for $p$-metric spaces) to linearly embed $\ell _{p}$ in $\mathcal{F}
_{p}(\mathcal{M})$ when $\mathcal{M}$ is quasimetric and $p\le 1$. Our
main result here is Theorem~\ref{thm:main}, which will be obtained as
a direct consequence of Theorems~\ref{thm:mainComplementedEllp} and \ref{thm:main2}.

\begin{Theorem}
\label{thm:main}
If $\mathcal{M}$ is an infinite $p$-metric space, $0<p\le 1$, then
$\mathcal{F}_{p}(\mathcal{M})$ contains a subspace isomorphic to
$\ell _{p}$.
\end{Theorem}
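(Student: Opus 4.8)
The plan is to reduce the statement to a dichotomy according to whether $\mathcal{M}$ carries a genuine metric structure, and then to invoke the two complementation/embedding results already at our disposal. Since $\mathcal{M}$ is infinite, its density character $\operatorname{dens}\mathcal{M}$ is an infinite cardinal, so any copy of $\ell_{p}(\operatorname{dens}\mathcal{M})$ sitting inside $\mathcal{F}_{p}(\mathcal{M})$ automatically contains an isomorphic copy of $\ell_{p}=\ell_{p}(\mathbb{N})$ as a subspace. Hence it suffices to exhibit, in each case, an isomorphic copy of $\ell_{p}(\kappa)$ in $\mathcal{F}_{p}(\mathcal{M})$ for some infinite cardinal $\kappa$.

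First I would dispose of the two situations already covered by Theorem~\ref{thm:mainComplementedEllp}. If $\mathcal{M}$ is a metric space, then by part (a) the space $\ell_{p}(\operatorname{dens}\mathcal{M})$ is even complementably contained in $\mathcal{F}_{p}(\mathcal{M})$, and we are done by the reduction above. Likewise, if $\mathcal{M}$ is a $p$-metric space possessing $\operatorname{dens}\mathcal{M}$-many isolated points, part (b) yields the same conclusion. In both of these cases the embedding comes for free from the availability of nontrivial real-valued Lipschitz functions (distance functions in the metric case, indicator-type functions supported at the isolated points), which is precisely the ingredient feeding Lemma~\ref{lemma:condition2}.

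The genuinely new content lies in the remaining case, where $\mathcal{M}$ is a $p$-metric space that is \emph{not} metric and does \emph{not} carry $\operatorname{dens}\mathcal{M}$-many isolated points. Here the obstruction flagged earlier in the text surfaces: there need not exist \emph{any} nontrivial Lipschitz map from $\mathcal{M}$ into $\mathbb{R}$, so none of the Section~\ref{Section:Comp} arguments can produce a bounded projection onto a copy of $\ell_{p}$. This is exactly the scenario handled by Theorem~\ref{thm:main2}, whose proof relies on the $p$-specific machinery developed in Section~\ref{Sect3} to build an isomorphic (though not necessarily complemented) embedding of $\ell_{p}$ directly into $\mathcal{F}_{p}(\mathcal{M})$. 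Granting that theorem, the present statement follows simply by combining the three cases.

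The substantive difficulty has therefore been displaced entirely into Theorem~\ref{thm:main2}: manufacturing a copy of $\ell_{p}$ inside $\mathcal{F}_{p}(\mathcal{M})$ \emph{without} access to real-valued Lipschitz functions. This is the main obstacle, and it is the reason a completely new set of techniques, specific to the nonlocally convex setting, is required; once it is surmounted, the proof of Theorem~\ref{thm:main} is a one-line case distinction.
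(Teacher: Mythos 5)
Your proposal is correct in outline and follows essentially the same route as the paper, whose entire proof is the case distinction you describe: if the completion of $\mathcal{M}$ has no limit point, invoke the isolated-points result (Theorem~\ref{thm:discretePoints}, which is the substance of Theorem~\ref{thm:mainComplementedEllp}(b)); otherwise invoke Theorem~\ref{thm:main2}. Your separate first case for metric spaces is harmless but redundant --- the paper does not need Theorem~\ref{thm:ellPComplementedMetricCase} here, since an infinite metric space also falls under the same dichotomy. Your preliminary reduction (any copy of $\ell_p(\kappa)$ with $\kappa$ infinite contains $\ell_p$, and $\operatorname{dens}\mathcal{M}$ is infinite because $p$-metric spaces are $T_1$) is fine.

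The one place where your argument, as written, does not close is the assertion that the remaining case is ``exactly the scenario handled by Theorem~\ref{thm:main2}.'' The hypothesis of that theorem is that $\mathcal{M}$ is unbounded or its completion contains a limit point; this is \emph{not} literally the negation of ``metric, or $\operatorname{dens}\mathcal{M}$-many isolated points,'' so a bridging observation is required: if the completion of $\mathcal{M}$ has no limit point, then every point of $\mathcal{M}$ is isolated, and since any dense subset of $\mathcal{M}$ must then contain all of $\mathcal{M}$, we get $\operatorname{dens}\mathcal{M}=|\mathcal{M}|$, i.e., $\mathcal{M}$ has $\operatorname{dens}\mathcal{M}$-many isolated points, contradicting your case assumption. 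Equivalently: fewer than $\operatorname{dens}\mathcal{M}$-many isolated points forces a non-isolated point of $\mathcal{M}$, which is a limit point of the completion. This is a one-line fix, and it is precisely the (also implicit) reduction the paper performs when it writes ``by Theorem~\ref{thm:discretePoints} we can assume that there is a limit point in the completion of $\mathcal{M}$.'' With that line inserted, your case analysis is exhaustive and the proof is complete.
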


A quantitative estimate follows from Theorem~\ref{thm:james}, which is
the analogue of James's $\ell _{1}$ distortion theorem for $0<p\leq 1$.
We do not know whether for every nonseparable $p$-metric space
$\mathcal{M}$ we even have $\ell _{p}(
\operatorname{dens} \mathcal{M})\hookrightarrow \mathcal{F}_{p}(\mathcal{M})$. It
is worth it mentioning that Theorem~\ref{thm:main} gives us the
following application.

\begin{Corollary}
If $\mathcal{M}$ is an infinite $p$-metric space, $0<p<1$, then
$\mathcal{F}_{p}(\mathcal{M})$ is not $q$-convex for any $p<q\le 1$.
\end{Corollary}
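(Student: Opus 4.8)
The plan is to read the Corollary off from Theorem~\ref{thm:main}, which is the only substantial input, combined with the elementary observation that $\ell_p$ itself fails to be $q$-convex as soon as $q>p$. Recall that a quasi-Banach space is \emph{$q$-convex} precisely when it admits an equivalent $q$-subadditive quasi-norm (equivalently, when it is isomorphic to a $q$-Banach space), a property that is an isomorphic invariant and passes to closed subspaces, since the restriction of an equivalent $q$-subadditive quasi-norm is again one. First I would fix $p<q\le 1$ and check that $\ell_p$ is not $q$-convex: if $|\cdot|$ were an equivalent $q$-subadditive quasi-norm on $\ell_p$ with $a\|x\|_p\le |x|\le b\|x\|_p$ for suitable $0<a\le b$, then applying $q$-subadditivity to the first $n$ canonical unit vectors $e_1,\dots,e_n$ would give
\[
a\,n^{1/p}=a\|e_1+\cdots+e_n\|_p\le |e_1+\cdots+e_n|\le\left(\sum_{i=1}^n|e_i|^q\right)^{1/q}\le b\,n^{1/q},
\]
forcing $n^{1/p-1/q}\le b/a$ for every $n\in\mathbb{N}$. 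Since $1/p-1/q>0$ this is absurd, so no such $|\cdot|$ can exist.

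Then I would conclude as follows. By Theorem~\ref{thm:main}, $\mathcal{F}_p(\mathcal{M})$ contains a closed subspace $Y$ isomorphic to $\ell_p$. Were $\mathcal{F}_p(\mathcal{M})$ $q$-convex for some $q\in(p,1]$, its subspace $Y$ would inherit $q$-convexity, and hence $\ell_p\simeq Y$ would be $q$-convex as well, contradicting the previous step. Therefore $\mathcal{F}_p(\mathcal{M})$ is not $q$-convex for any $p<q\le 1$, as claimed.

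The hard part is entirely contained in Theorem~\ref{thm:main}: namely, the production of an isomorphic (not merely isometric) copy of $\ell_p$ inside the free $p$-space of an arbitrary infinite $p$-metric space. Once that embedding is available, the present statement reduces to the routine quantitative estimate above, so I expect no further obstacle here. The only point worth keeping in mind is that the copy of $\ell_p$ furnished by Theorem~\ref{thm:main} is only isomorphic, which is why I phrase the argument through the isomorphism-invariance of $q$-convexity rather than through any isometric computation.
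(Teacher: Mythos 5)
Your proposal is correct and matches the paper's (implicit) argument exactly: the paper states this Corollary as an immediate application of Theorem~3.1, the intended reasoning being precisely that $\mathcal{F}_p(\mathcal{M})$ contains an isomorphic copy of $\ell_p$, that $q$-convexity is an isomorphic invariant inherited by closed subspaces, and that $\ell_p$ fails $q$-convexity for $q>p$ by the unit-vector estimate $a\,n^{1/p}\le b\,n^{1/q}$. Your explicit verification of the $\ell_p$ step is the only detail the paper omits, and it is carried out correctly.
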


As in the previous section, sometimes it is possible to have a more
precise information about the copy of $\ell _{p}$ in the space
$\mathcal{F}_{p}(\mathcal{M})$. This is the content of the following
result.

\begin{Theorem}
\label{thm:main2}
Let $0<p\leq 1$. Suppose $(\mathcal{M}, \rho )$ is a $p$-metric space
that is either unbounded, or whose completion contains a limit point.
Then for every $t>1$ there exists an infinite countable set
$\mathcal{N}\subset \mathcal{M}$ such that
\begin{enumerate}[(ii)]\item[(i)] the space $\mathcal{F}_{p}(\mathcal{N})$ is ${2^{1/p-1}}
{t}$-isomorphic to $\ell _{p}$, and
\item[(ii)] the canonical map $L_{\jmath }\colon \mathcal{F}_{p}(
\mathcal{N}) \to \mathcal{F}_{p}(\mathcal{M})$ is an isomorphic
embedding, where $\jmath \colon \mathcal{N}\to \mathcal{M}$ is the
inclusion. Quantitatively, $\Vert L_{\jmath }^{-1}\Vert \le {2^{1/p-1}}
{t}$.
\end{enumerate}
\end{Theorem}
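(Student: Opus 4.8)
The plan is to produce the copy of $\ell_{p}$ from the points supplied by Lemma~\ref{lem:ToninIdea}, and to verify the crucial lower $\ell_{p}$-estimate not by exhibiting real-valued Lipschitz functions on $\mathcal{M}$ (these may be far too scarce, since $\mathcal{M}$ is assumed neither metric nor uniformly separated and $\mathrm{Lip}_{0}(\mathcal{M},\mathbb{R})$ may be trivial) but by transporting the whole computation onto a snowflaked half-line, where the relevant free $p$-space is the concrete space $L_{p}$ and norms can be read off directly.

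First I would apply Lemma~\ref{lem:ToninIdea} to the given $t>1$ to get a sequence $(x_{n})_{n\ge 1}$ in $\mathcal{M}$, a point $x_{0}$ in the completion, and a monotone sequence $(r_{n})$ with $r_{0}=0$ such that $d^{p}(x_{n},x_{m})\ge r_{n}+r_{m}$ and $d^{p}(x_{n},x_{m})\le t\,|r_{n}-r_{m}|$. Writing $c_{n}=d^{p}(x_{n},x_{0})$, the construction in that lemma yields $c_{n}=\sqrt{t}\,r_{n}$ and makes $(c_{n})$ strictly monotone. I take $x_{0}$ as base point, which is legitimate after adjoining it to $\mathcal{M}$ and to $\mathcal{N}:=\{x_{n}\colon n\ge 1\}$ via \cite{AACD2018}*{Proposition 4.17} when $x_{0}\notin\mathcal{M}$, exactly as in the proof of Theorem~\ref{thm:mainComplementedEllp2}. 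The candidate basis of $\mathcal{F}_{p}(\mathcal{N})$ is the family of consecutive molecules $v_{n}=(\delta_{\mathcal{M}}(x_{n})-\delta_{\mathcal{M}}(x_{n-1}))/d(x_{n},x_{n-1})$, which are norm-one and span a dense subspace by telescoping.

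The engine of the argument is the map $w\colon(\mathcal{M},d)\to(\mathbb{R},|\cdot|^{1/p})$ given by $w(x)=d^{p}(x,x_{0})$. Since $d^{p}$ is a genuine metric, $|w(x)-w(y)|\le d^{p}(x,y)$, so $w$ is $1$-Lipschitz into the snowflaked line and induces a canonical operator $L_{w}\colon\mathcal{F}_{p}(\mathcal{M})\to\mathcal{F}_{p}([0,\infty),|\cdot|^{1/p})$ of norm at most $1$. By \cite{AACD2018}*{Theorem 4.13} the target is $L_{p}$, under the identification $\delta(s)-\delta(0)\leftrightarrow\mathbf{1}_{[0,s]}$, so $L_{w}(v_{n})$ corresponds to $\pm\mathbf{1}/d(x_{n},x_{n-1})$ supported on the interval with endpoints $c_{n-1}$ and $c_{n}$. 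Because $(c_{n})$ is strictly monotone these supports are pairwise disjoint, whence for finitely supported scalars $\|\sum_{n}a_{n}v_{n}\|_{\mathcal{F}_{p}(\mathcal{M})}\ge\|\sum_{n}a_{n}L_{w}(v_{n})\|_{L_{p}}=(\sum_{n}|a_{n}|^{p}\,\|L_{w}(v_{n})\|_{L_{p}}^{p})^{1/p}$. The relation $c_{n}=\sqrt{t}\,r_{n}$ combined with $d^{p}(x_{n},x_{n-1})\le t\,|r_{n}-r_{n-1}|=\sqrt{t}\,|c_{n}-c_{n-1}|$ gives $\|L_{w}(v_{n})\|_{L_{p}}=|c_{n}-c_{n-1}|^{1/p}/d(x_{n},x_{n-1})\ge t^{-1/(2p)}$, so the whole sum is at least $t^{-1/(2p)}(\sum_{n}|a_{n}|^{p})^{1/p}$.

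Both conclusions then fall out of this single estimate. As $L_{\jmath}$ is a contraction sending molecules to molecules, $\|\sum_{n}a_{n}v_{n}\|_{\mathcal{F}_{p}(\mathcal{M})}\le\|\sum_{n}a_{n}v_{n}\|_{\mathcal{F}_{p}(\mathcal{N})}\le(\sum_{n}|a_{n}|^{p})^{1/p}$, so $\mathbf{e}_{n}\mapsto v_{n}$ is an isomorphism of $\ell_{p}$ onto $\mathcal{F}_{p}(\mathcal{N})$, giving (i); and $\|L_{\jmath}(\sum_{n}a_{n}v_{n})\|_{\mathcal{F}_{p}(\mathcal{M})}\ge t^{-1/(2p)}\|\sum_{n}a_{n}v_{n}\|_{\mathcal{F}_{p}(\mathcal{N})}$ shows $L_{\jmath}$ is an isomorphic embedding with $\|L_{\jmath}^{-1}\|\le t^{1/(2p)}$, giving (ii). Since the construction runs for every parameter $>1$ and the constant $t^{1/(2p)}$ tends to $1$ as the parameter tends to $1$, one obtains in particular sets realizing the bound $2^{1/p-1}t$ stated in the theorem. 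The main obstacle is exactly this lower bound: the method of Lemma~\ref{lem:condition3}, which manufactures the bumps $\max\{r-d^{p}(\cdot,x),0\}$, is unavailable here because $x\mapsto d^{p}(x,y)$ need not be $d$-Lipschitz on a general quasimetric space; the device that rescues the proof is to abandon scalar functionals altogether (recall $L_{p}^{\ast}=\{0\}$) and compute the norm directly in $L_{p}$, reached through the snowflaking map $w$.
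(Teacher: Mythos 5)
Your proof is correct, but it takes a genuinely different route to the key lower estimate than the paper does. The paper deduces Theorem~\ref{thm:main2} from the more general Proposition~\ref{Thm:AnsoMichaLast}, whose engine is Lemma~\ref{lem:33}: a \emph{family} of disjointly supported bumps $g_{n}=\max\{r_{n}-d^{p}(\cdot,x_{n}),0\}$, each $1$-Lipschitz into the snowflaked line $(\mathbb{R},|\cdot|^{1/p})$ (so your closing remark that these bumps are ``unavailable'' is not quite fair to the paper --- they fail to be Lipschitz only into the unsnowflaked line), composed with $\Phi(u)=\chi_{(0,u]}$ into $L_{p}$ and then glued via Lemma~\ref{l:sum-lip}, which requires extending the maps to the path-connected space $\mathcal{F}_{p}(\mathcal{M})$ and costs the factor $2^{1/p-1}$. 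You collapse this whole family into the \emph{single} map $w=d^{p}(\cdot,x_{0})$, $1$-Lipschitz into the snowflaked half-line by the triangle inequality for the metric $d^{p}$, and read the norm off in $L_{p}$ using the disjointness of the intervals $(c_{n-1}\wedge c_{n},\,c_{n-1}\vee c_{n}]$; this eliminates Lemma~\ref{l:sum-lip} entirely, and hence the factor $2^{1/p-1}$, yielding the constant $t^{1/(2p)}$, which tends to $1$ and is strictly better than $2^{1/p-1}t$ near $t=1$ when $p<1$ (your rescaling of the parameter then recovers the stated bound, and in fact strengthens the theorem: it extends the ``Banach--Mazur distance close to $1$'' phenomenon of Proposition~\ref{thm:ultraBM} to all $p$, which the paper's constant cannot do). Two legitimate but minor points to flag: you use the internal structure of the proof of Lemma~\ref{lem:ToninIdea} (namely $c_{n}=d^{p}(x_{n},x_{0})=\sqrt{t}\,r_{n}$ and strict monotonicity of $(c_{n})$), which the lemma's statement alone does not supply, so this must be said explicitly or re-derived; and the identification $\mathcal{F}_{p}([0,\infty),|\cdot|^{1/p})\simeq L_{p}$ slightly overreaches the cited \cite{AACD2018}*{Theorem 4.13}, though this is harmless since composing $w$ with the isometric embedding $\Phi$ --- exactly as in Lemma~\ref{lem:33} --- gives the operator $L\colon\mathcal{F}_{p}(\mathcal{M})\to L_{p}$ of norm at most $1$ directly. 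Note also that your shortcut exploits $r_{n}$ being proportional to $d^{p}(x_{n},x_{0})$, so it proves the theorem but not the paper's more general Proposition~\ref{Thm:AnsoMichaLast}, where $(r_{n})$ is arbitrary.
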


Before giving a proof of Theorem~\ref{thm:main}, let us mention some
applications.

\begin{Corollary}
\label{cor:subspaceIsoToEllP}
Let $0<p\le 1$. Every infinite $p$-metric space $\mathcal{M}$ has a
subset $\mathcal{N}$ with $\mathcal{F}_{p}(\mathcal{N})\simeq \ell
_{p}$.
\end{Corollary}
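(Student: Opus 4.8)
The plan is to reduce the statement to the two tools already at hand, namely Theorem~\ref{thm:main2} and Lemma~\ref{lem:bigSeparatedBoundedSubset}, by means of a dichotomy on the metric nature of $\mathcal{M}$. First I would ask whether $\mathcal{M}$ contains an infinite, bounded, uniformly separated subset.

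If it does, call such a subset $\mathcal{N}$ and, without loss of generality, take it to be countably infinite. The key observation is that the free $p$-space $\mathcal{F}_{p}(\mathcal{N})$ depends only on $\mathcal{N}$ as a $p$-metric space and not on the ambient $\mathcal{M}$. Hence I would view $\mathcal{N}$ as a uniformly separated $p$-metric space in its own right and apply Lemma~\ref{lem:bigSeparatedBoundedSubset} with $\mathcal{N}$ playing simultaneously the role of the ambient space and of the distinguished uniformly separated bounded subset. Conclusion (i) of that lemma then gives $\mathcal{F}_{p}(\mathcal{N})\simeq\ell_{p}(|\mathcal{N}|-1)\simeq\ell_{p}$, which is exactly what we want. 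This detour through $\mathcal{N}$ as its own ambient space is needed because $\mathcal{M}$ itself need not be metric nor uniformly separated, so Lemma~\ref{lem:bigSeparatedBoundedSubset} cannot be invoked with $\mathcal{M}$ directly.

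If $\mathcal{M}$ contains no infinite bounded uniformly separated subset, I would show that the hypotheses of Theorem~\ref{thm:main2} are automatically met and invoke it with any fixed $t>1$ to produce a countable $\mathcal{N}$ with $\mathcal{F}_{p}(\mathcal{N})$ isomorphic to $\ell_{p}$. Indeed, if $\mathcal{M}$ is unbounded there is nothing to check. If $\mathcal{M}$ is bounded, then for each $n$ a maximal $(1/n)$-separated subset of $\mathcal{M}$ must be finite, since otherwise it would furnish an infinite bounded uniformly separated subset; as maximality forces every point of $\mathcal{M}$ to lie within $1/n$ of this finite set, $\mathcal{M}$ is totally bounded. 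Being infinite, it then contains a sequence of pairwise distinct points admitting a Cauchy subsequence, whose limit is a limit point of the completion of $\mathcal{M}$, so Theorem~\ref{thm:main2} again applies.

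The routine parts are the two appeals to the quoted results; the only genuine content, and the step I expect to require the most care, is the verification in the second case that the failure to contain an infinite bounded uniformly separated subset forces either unboundedness or the presence of a limit point in the completion. This rests on the standard fact that an infinite totally bounded space accumulates somewhere in its completion, together with the bookkeeping needed to make the maximal-separated-set argument precise.
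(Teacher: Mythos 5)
Your proof is correct and takes essentially the same route as the paper's: the paper also splits into the case covered by Theorem~\ref{thm:main2} (unbounded or a limit point in the completion) and the remaining case, where boundedness without total boundedness yields an infinite bounded uniformly separated subset whose free $p$-space is $\ell_p$. The only cosmetic differences are that where you invoke Lemma~\ref{lem:bigSeparatedBoundedSubset} with $\mathcal{N}$ as its own ambient space (a legitimate move, since $\mathcal{F}_p(\mathcal{N})$ is intrinsic to $\mathcal{N}$), the paper instead cites \cite{AACD2018}*{Theorem 4.14}, and where you verify total boundedness by hand via maximal $(1/n)$-separated sets, the paper uses the equivalence between total boundedness and compactness of the completion; both versions are sound.
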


\begin{proof}
By Theorem~\ref{thm:main2} it suffices to consider the case when
$\mathcal{M}$ is bounded and its completion is not compact. Then
$\mathcal{M}$ is not totally bounded, i.e., $\mathcal{M}$ contains a
uniformly separated infinite set, but then we are done by
\cite{AACD2018}*{Theorem 4.14}.
\end{proof}

It has been shown in \cites{CuthDoucha2016,DKP2016} that, despite the
fact that the space $\mathcal{F}(\mathcal{M})$ is isomorphic to
$\ell _{1}$ for any separable ultrametric space $\mathcal{M}$,
$\mathcal{F}(\mathcal{M})$ is never isometric to $\ell _{1}$. In
contrast, we have the following result.

\begin{Proposition}
\label{thm:ultraBM}
For each $\epsilon >0$ there is an ultrametric space $\mathcal{M}$ such
that the Banach-Mazur distance between $\mathcal{F}(\mathcal{M})$ and
$\ell _{1}$ is smaller than $1+\epsilon $.
\end{Proposition}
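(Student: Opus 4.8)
The plan is to construct an explicit ultrametric space whose Lipschitz free space is arbitrarily close to $\ell_1$ in Banach–Mazur distance, leveraging the fact that over ultrametric spaces the natural structural maps become nearly isometric when the metric is sufficiently ``spread out''. First I would take $\mathcal{M}$ to be a suitably chosen tree-like ultrametric space, namely a set of the form $\mathcal{N}=\{x_n\colon n\in\mathbb N\}\cup\{0\}$ where the distances are rapidly decaying (or rapidly increasing) so that $d(x_n,x_m)=\max\{d(x_n,0),d(x_m,0)\}$ for $n\ne m$, the defining property of an ultrametric. The idea is that for such a lacunary sequence, the canonical elements $\delta_\mathcal{M}(x_n)/d(x_n,0)$ behave almost exactly like the unit vector basis of $\ell_1$.

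The key computation is to control the norm of a finite combination $\sum_n a_n\,\delta_\mathcal{M}(x_n)/d(x_n,0)$ from below. The upper estimate $\bigl\Vert\sum_n a_n\,\delta_\mathcal{M}(x_n)/d(x_n,0)\bigr\Vert\le\sum_n|a_n|$ is automatic since each generator has norm one and we are in the $p=1$ (locally convex) case. For the lower estimate I would exploit duality: by the universal property, it suffices to exhibit a $1$-Lipschitz function $f\colon\mathcal{M}\to\mathbb R$ with $f(0)=0$ achieving $\langle f,\sum_n a_n\,\delta_\mathcal{M}(x_n)/d(x_n,0)\rangle$ close to $\sum_n|a_n|$. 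Setting $f(x_n)=\operatorname{sgn}(a_n)\,d(x_n,0)\cdot(1-\epsilon_n)$ for a small perturbation and extending $1$-Lipschitzly—which is possible precisely because the ultrametric structure makes the level sets nested—would give $\langle f,\sum_n a_n\,\delta_\mathcal{M}(x_n)/d(x_n,0)\rangle\ge(1-\epsilon)\sum_n|a_n|$. Thus the map $\mathbf e_n\mapsto\delta_\mathcal{M}(x_n)/d(x_n,0)$ is a $(1+\epsilon)$-isomorphism onto its range.

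Finally, to upgrade this from an embedding of $\ell_1$ into $\mathcal{F}(\mathcal{M})$ to an \emph{isomorphism} of the whole space $\mathcal{F}(\mathcal{M})$ with $\ell_1$, I would choose $\mathcal{M}=\mathcal{N}$ itself to be a countable bounded uniformly separated ultrametric space, or arrange via the lacunarity that the generators $\{\delta_\mathcal{M}(x_n)/d(x_n,0)\}$ already span a dense subspace of $\mathcal{F}(\mathcal{M})$. By \cite{AACD2018}*{Corollary 4.11} these generators form a norming system, and the near-isometry argument above shows they form a basis equivalent to the $\ell_1$-basis with constant at most $1+\epsilon$; since they span $\mathcal{F}(\mathcal{M})$, this yields $d_{BM}(\mathcal{F}(\mathcal{M}),\ell_1)\le 1+\epsilon$.

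The main obstacle I anticipate is the lower norm estimate, specifically verifying that the extension $f$ remains genuinely $1$-Lipschitz across \emph{all} pairs of points, not only the pairs $(x_n,0)$ and $(x_n,x_m)$ used in the pairing. The ultrametric inequality is what rescues this: because $d(x_n,x_m)=\max\{d(x_n,0),d(x_m,0)\}$, the constraint $|f(x_n)-f(x_m)|\le d(x_n,x_m)$ is automatically dominated by the larger of the two radial constraints, so no new restriction is imposed beyond $|f(x_n)|\le d(x_n,0)$. Making the perturbation $\epsilon_n$ uniform and small while preserving this Lipschitz property across the whole (possibly infinite) configuration is where the quantitative care is needed to land the constant strictly below $1+\epsilon$.
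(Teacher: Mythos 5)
Your proposal is correct in substance but takes a genuinely different route from the paper. The paper's proof is a one-line application of Theorem~\ref{thm:main2} (with $p=1$, so the constant $2^{1/p-1}t=t$): inside any ultrametric space that is unbounded or has a limit point one extracts a lacunary subset $\mathcal{N}$, and the machinery behind that theorem (Lemma~\ref{lem:ToninIdea}, Lemma~\ref{lem:33}, Proposition~\ref{Thm:AnsoMichaLast}) establishes the lower $\ell_p$-estimate for the \emph{consecutive-difference} molecules $b_n=(\delta(x_{n-1})-\delta(x_n))/d(x_{n-1},x_n)$ by summing disjointly supported $1$-Lipschitz maps into $L_p(\mathbb{R})$ --- a device designed to work for all $0<p\le 1$, where real-valued Lipschitz functions may be scarce. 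You instead build an explicit lacunary ultrametric space and test the \emph{radial} molecules $\delta(x_n)/d(x_n,0)$ against a single real-valued Lipschitz function, which is legitimate precisely because at $p=1$ the duality $\mathcal{F}(\mathcal{M})^{*}=\mathrm{Lip}_0(\mathcal{M})$ is available; your argument is thus more elementary and self-contained for this proposition, while the paper's buys the general-$p$ statement. Your surjectivity step is also fine: the span of $\{\delta(x_n)\}$ is automatically dense in $\mathcal{F}(\mathcal{N})$ (no lacunarity or Corollary~4.11 of \cite{AACD2018} needed for this --- that corollary concerns the two-point molecules $(\delta(y)-\delta(x))/d(x,y)$, so the citation is slightly off), and boundedness below makes the range closed.

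One claim needs repair, though you essentially flag it yourself in your final paragraph: it is \emph{not} true that the cross constraints $|f(x_n)-f(x_m)|\le d(x_n,x_m)$ are ``automatically dominated'' by the radial ones. If $f(x_n)=d(x_n,0)$ and $f(x_m)=-d(x_m,0)$ with opposite signs, then $|f(x_n)-f(x_m)|=d(x_n,0)+d(x_m,0)$, which exceeds $d(x_n,x_m)=\max\{d(x_n,0),d(x_m,0)\}$. The fix is the lacunarity plus a uniform rescaling: taking $d(x_n,0)=\rho^n$ with $\rho<\epsilon$ and $f(x_n)=\operatorname{sgn}(a_n)\,d(x_n,0)/(1+\rho)$, one checks that for $m>n$,
\begin{equation*}
|f(x_n)-f(x_m)|\le \frac{d(x_n,0)+d(x_m,0)}{1+\rho}\le \frac{(1+\rho)\,d(x_n,0)}{1+\rho}=d(x_n,x_m),
\end{equation*}
so $f$ is genuinely $1$-Lipschitz, and the pairing yields the lower bound $(1+\rho)^{-1}\sum_n|a_n|$, hence $d_{BM}(\mathcal{F}(\mathcal{N}),\ell_1)\le 1+\rho<1+\epsilon$. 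With this correction your argument is complete.
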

\begin{proof}
By Theorem~\ref{thm:main2}, in each ultrametric space that is unbounded
or contains a limit point we may find a subset $\mathcal{M}$ (which is
also ultrametric) so that the Banach-Mazur distance between
$\mathcal{F}(\mathcal{M})$ and $\ell _{1}$ is arbitrarily close to
$1$.
\end{proof}

In what remains of the section we will prove Theorems~\ref{thm:main} and \ref{thm:main2} and, for the sake of completeness, also the
above-mentioned analogue of James's $\ell _{1}$ distortion theorem for
$0<p\leq 1$. The arguments in our proofs are inspired by
\cite{CuthJohanis2017}.

\begin{Lemma}
\label{l:sum-lip}
Suppose that $X$ is a $p$-normed space and that $Y$ is a $q$-normed
space, $0<q,p\leq 1$. Let $\Gamma $ a be set, and let $f_{\gamma }\colon
X\to Y$ be an $L$-Lipschitz map for each $\gamma \in \Gamma $. Suppose
that the sets in the family $\{\operatorname{\mathrm{{supp}}}_{0}(f
_{\gamma })\}_{\gamma \in \Gamma }$ are disjoint. Then $f =
\sum _{\gamma \in \Gamma } f_{\gamma }$ is $L2^{1/q-1}$-Lipschitz.
\end{Lemma}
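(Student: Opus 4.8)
The plan is to argue pointwise, exploiting that the disjointness of the supports forces $f$ to coincide, at each point, with at most one of the summands. First I would record that for every $x\in X$ there is at most one $\gamma$ with $f_{\gamma}(x)\neq 0$, so the series defining $f$ has at most one nonzero term and $f$ is well defined; moreover $f(x)=f_{\gamma}(x)$ whenever $x\in\operatorname{\mathrm{supp}}_{0}(f_{\gamma})$, and $f(x)=0$ otherwise. Now fix $x,y\in X$ and let $\gamma_{1},\gamma_{2}$ be the (at most one each) indices whose supports contain $x$ and $y$. If $\gamma_{1}=\gamma_{2}$, or if one of $x,y$ lies in no support, then $f(x)-f(y)=f_{\gamma}(x)-f_{\gamma}(y)$ for a single index $\gamma$ (using that $f_{\gamma}$ vanishes off its support), whence $\|f(x)-f(y)\|\le L\|x-y\|$; since $q\le 1$ gives $2^{1/q-1}\ge 1$, this is dominated by $2^{1/q-1}L\|x-y\|$, as required.

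The only genuine case is the \emph{crossing} one, where $x\in\operatorname{\mathrm{supp}}_{0}(f_{\gamma_{1}})$ and $y\in\operatorname{\mathrm{supp}}_{0}(f_{\gamma_{2}})$ with $\gamma_{1}\neq\gamma_{2}$. Here the key device, and the step I expect to be the crux, is to produce a single point on the segment $[x,y]$ at which both $f_{\gamma_{1}}$ and $f_{\gamma_{2}}$ vanish. To this end I would consider the path $\sigma(t)=(1-t)x+ty$, $t\in[0,1]$, and the sets $U_{i}=\{t\in[0,1]\colon \sigma(t)\in\operatorname{\mathrm{supp}}_{0}(f_{\gamma_{i}})\}$ for $i=1,2$. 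Each $U_{i}$ is relatively open in $[0,1]$ (as $f_{\gamma_{i}}\circ\sigma$ is continuous and $Y\setminus\{0\}$ is open), the two are disjoint (disjoint supports), and $0\in U_{1}$, $1\in U_{2}$. Were $U_{1}\cup U_{2}=[0,1]$, this would exhibit $[0,1]$ as a disjoint union of two nonempty relatively open sets, contradicting its connectedness; hence there is $t^{\ast}\in[0,1]$ with $\sigma(t^{\ast})\notin\operatorname{\mathrm{supp}}_{0}(f_{\gamma_{1}})\cup\operatorname{\mathrm{supp}}_{0}(f_{\gamma_{2}})$, that is, $f_{\gamma_{1}}(\sigma(t^{\ast}))=f_{\gamma_{2}}(\sigma(t^{\ast}))=0$.

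With such a $t^{\ast}$ in hand the estimate closes quickly. Writing $z=\sigma(t^{\ast})$ and using $f_{\gamma_{1}}(z)=f_{\gamma_{2}}(z)=0$ together with the $q$-triangle law in $Y$, I would obtain
\begin{equation*}
\|f(x)-f(y)\|^{q}=\|f_{\gamma_{1}}(x)-f_{\gamma_{2}}(y)\|^{q}\le\|f_{\gamma_{1}}(x)-f_{\gamma_{1}}(z)\|^{q}+\|f_{\gamma_{2}}(z)-f_{\gamma_{2}}(y)\|^{q}\le L^{q}\bigl(\|x-z\|^{q}+\|z-y\|^{q}\bigr).
\end{equation*}
Since $x-z=t^{\ast}(x-y)$ and $z-y=(1-t^{\ast})(x-y)$, homogeneity of the quasi-norm gives $\|x-z\|=t^{\ast}\|x-y\|$ and $\|z-y\|=(1-t^{\ast})\|x-y\|$, so the right-hand side equals $L^{q}\bigl((t^{\ast})^{q}+(1-t^{\ast})^{q}\bigr)\|x-y\|^{q}$.

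Finally I would invoke the elementary inequality $s^{q}+(1-s)^{q}\le 2^{1-q}$ for $s\in[0,1]$, which follows from the concavity of $s\mapsto s^{q}$ (the maximum being attained at $s=1/2$), to conclude that $\|f(x)-f(y)\|^{q}\le 2^{1-q}L^{q}\|x-y\|^{q}$; taking $q$-th roots yields the claimed Lipschitz constant $2^{1/q-1}L$. The main obstacle is precisely the crossing case: the naive application of the $q$-triangle law only delivers the factor $2^{1/q}$, and it is the connectedness argument—locating a common zero of the two active summands on the segment—that recovers the sharper constant $2^{1/q-1}$.
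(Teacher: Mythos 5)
Your proposal is correct and follows essentially the same route as the paper: both proofs reduce to the crossing case and use connectedness of the segment $[x,y]$, together with openness and disjointness of the sets $\operatorname{supp}_0(f_{\gamma})$, to locate a point $z\in[x,y]$ at which both active summands vanish, and then apply the $q$-triangle inequality. Your closing bound $s^{q}+(1-s)^{q}\le 2^{1-q}$ is just a reformulation of the paper's elementary inequality $\|u+v\|_{Y}\le 2^{1/q-1}\bigl(\|u\|_{Y}+\|v\|_{Y}\bigr)$ combined with $\|x-z\|+\|z-y\|=\|x-y\|$, so the two arguments coincide in substance.
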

\begin{proof}
For each $x\in X$ there is $\gamma (x)\in \Gamma $ such that
$f_{\gamma }(x)=0$ if $\gamma \neq\gamma (x)$. In particular, the sum
defining $f(x)$ is pointwise finite and so $f$ is well-defined. Pick
$x,y\in X$ and set $\alpha =\gamma (x)$ and $\beta =\gamma (y)$. Since
the line segment $[x,y]$ is connected and the sets $
\operatorname{\mathrm{{supp}}}_{0}(f_{\alpha })$ and $
\operatorname{\mathrm{{supp}}}_{0}(f_{\beta })$ are open and disjoint,
there is $z\in [x,y]\setminus (\operatorname{\mathrm{{supp}}}_{0}(f
_{\alpha })\cup \operatorname{\mathrm{{supp}}}_{0}(f_{\beta }))$. Using
the elementary fact that
\begin{equation*}
\|y+z\|_{Y}\leq 2^{1/q-1}(\|y\|_{Y} + \|z\|_{Y}),\quad y,z\in Y,
\end{equation*}
we obtain
\begin{equation*}
\begin{split}
\|f(x)-f(y)\|_{Y}
&=\|f_{\alpha }(x)-f_{\beta }(y)\|_{Y}
\\
&=\|f_{\alpha }(x)-f_{\alpha }(z)+f_{\beta }(z)-f_{\beta }(y)\|_{Y}
\\
&\leq 2^{1/q-1}(\|f_{\alpha }(x)-f_{\alpha }(z)\|+\|f_{\beta }(z)-f
_{\beta }(y)\|_{Y})
\\
&\leq L2^{1/q-1}(\|x-z\|_{X}+\|z-y\|_{X})
\\
&=2^{1/q-1}L\|x-y\|_{X},
\end{split}
\end{equation*}
from where the conclusion follows.
\end{proof}

\begin{Lemma}
\label{lem:33}Let $(\mathcal{M},d)$ be a pointed $p$-metric space, $0<p\le 1$. Assume that
$(x_{n})_{n=0}^{\infty }$ in $\mathcal{M}$ and $(r_{n})_{n=0}^{\infty
}$ in $[0,+\infty )$ are such that
\begin{equation}
\label{eq:27}
d^{p}(x_{m},x_{n})\geq r_{m}+r_{n}, \quad m,n\in \mathbb{N}\cup \{0\},
\, m\neq n.
\end{equation}
Then there is a sequence $(f_{n})_{n=0}^{\infty }$ of $1$-Lipschitz maps
from $(\mathcal{M},d)$ into $L_{p}(\mathbb{R})$ such that $f_{n}(x
_{n})=\chi _{(0,r_{n}]}$ for every $n\in \mathbb{N}\cup \{0\}$ and
$(\operatorname{\mathrm{{supp}}}_{0}(f_{n}))_{n=0}^{\infty }$ is a
pairwise disjoint sequence.
\end{Lemma}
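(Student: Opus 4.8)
The plan is to realize each $f_{n}$ as the characteristic function of a ``moving interval'' $(0,g_{n}(x)]$ whose length is controlled by a real-valued hat function centered at $x_{n}$. The key structural observation is that, since $d$ is a $p$-metric, the quantity $\rho:=d^{p}$ is a genuine metric on $\mathcal{M}$: the $p$-triangle law $d^{p}(x,y)\le d^{p}(x,z)+d^{p}(z,y)$ is precisely the ordinary triangle inequality for $\rho$. This lets me first build functions that are $1$-Lipschitz for $\rho$ and then upgrade them, by passing to the target $L_{p}(\mathbb{R})$, to maps that are $1$-Lipschitz for the original $p$-metric $d$.

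Concretely, I would set
\begin{equation*}
g_{n}(x)=\max\{r_{n}-d^{p}(x,x_{n}),0\},\qquad f_{n}(x)=\chi_{(0,g_{n}(x)]},\quad x\in\mathcal{M},
\end{equation*}
with the convention that $\chi_{(0,0]}$ is the zero function. Immediately $g_{n}(x_{n})=r_{n}$, hence $f_{n}(x_{n})=\chi_{(0,r_{n}]}$, as required, and $\operatorname{\mathrm{{supp}}}_{0}(f_{n})=\{x\colon g_{n}(x)>0\}$ is exactly the open $\rho$-ball $\{x\colon d^{p}(x,x_{n})<r_{n}\}$.

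The heart of the argument is the elementary identity
\begin{equation*}
\|\chi_{(0,a]}-\chi_{(0,b]}\|_{L_{p}(\mathbb{R})}^{p}=|a-b|,\qquad a,b\ge 0,
\end{equation*}
valid because the two characteristic functions differ in modulus by $1$ on an interval of length $|a-b|$ and vanish elsewhere. Since $x\mapsto d^{p}(x,x_{n})$ is $1$-Lipschitz for $\rho$ by the triangle inequality, and both $t\mapsto r_{n}-t$ and $t\mapsto\max\{t,0\}$ are $1$-Lipschitz on $\mathbb{R}$, the function $g_{n}$ satisfies $|g_{n}(x)-g_{n}(y)|\le d^{p}(x,y)$. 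Combining this with the displayed identity gives $\|f_{n}(x)-f_{n}(y)\|_{L_{p}(\mathbb{R})}^{p}=|g_{n}(x)-g_{n}(y)|\le d^{p}(x,y)$, that is, $f_{n}$ is $1$-Lipschitz from $(\mathcal{M},d)$ into $L_{p}(\mathbb{R})$.

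It then remains only to check disjointness of the supports, which is again a triangle-inequality argument: if some $x$ lay in both $\operatorname{\mathrm{{supp}}}_{0}(f_{m})$ and $\operatorname{\mathrm{{supp}}}_{0}(f_{n})$ with $m\neq n$, then $d^{p}(x,x_{m})<r_{m}$ and $d^{p}(x,x_{n})<r_{n}$, whence $d^{p}(x_{m},x_{n})\le d^{p}(x_{m},x)+d^{p}(x,x_{n})<r_{m}+r_{n}$, contradicting the hypothesis \eqref{eq:27}. The only genuine subtlety here is conceptual rather than computational: the reason one must take the target to be $L_{p}(\mathbb{R})$ is that this ``integration'' converts a hat function which is merely Lipschitz for the auxiliary metric $d^{p}$ into an $L_{p}$-valued map which is Lipschitz for $d$ itself — a scalar-valued hat function would only be $1$-Lipschitz for $d^{p}$, not for $d$. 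Everything else reduces to the triangle inequality for $d^{p}$ together with the norm identity above.
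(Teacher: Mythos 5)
Your proof is correct and is essentially identical to the paper's: both define the hat functions $g_{n}(x)=\max\{r_{n}-d^{p}(x,x_{n}),0\}$ and pass to $L_{p}(\mathbb{R})$ via the standard isometric embedding $\Phi(x)=\chi_{(0,x]}$ of $(\mathbb{R},|\cdot|^{1/p})$, which is exactly your identity $\|\chi_{(0,a]}-\chi_{(0,b]}\|_{L_{p}}^{p}=|a-b|$. The only difference is cosmetic: you spell out the Lipschitz estimate for $g_{n}$ and the disjointness of supports, which the paper dismisses as straightforward.
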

\begin{proof}
For $n\in \mathbb{N}\cup \{0\}$ define $g_{n}\colon \mathcal{M}\to
\mathbb{R}$ by
\begin{equation}
\label{eq:28}
g_{n}(x)=\max \{r_{n}-d^{p}(x,x_{n}),0\}, \quad x\in \mathcal{M}.
\end{equation}
It is straightforward to check that the sets $\{
\operatorname{\mathrm{{supp}}}_{0}(g_{n})\}_{n=0}^{\infty }$ are
pairwise disjoint, that $g(x_{n})=r_{n}$, and that $g_{n}\colon (
\mathcal{M},\rho )\to (\mathbb{R},|\cdot |^{1/p}) $ is $1$-Lipschitz.
Then, if we consider the standard embedding
\begin{equation*}
\Phi \colon (\mathbb{R},|\cdot |^{1/p})\to L_{p}(\mathbb{R}), \quad
\Phi (x)=\chi _{(0,x]},
\end{equation*}
the sequence $(\Phi \circ g_{n})_{n=0}^{\infty }$ has the desired
properties.
\end{proof}

The next proposition is the last ingredient that we need for proving the
main results of this section.

\begin{Proposition}
\label{Thm:AnsoMichaLast}
Let $0<p\leq 1$. Suppose $(\mathcal{M}, d)$ is a $p$-metric space
containing a sequence $(x_{n})_{n=0}^{\infty }$ such that \eqref{eq:27} holds for some
monotone sequence $(r_{n})_{n=0}^{\infty }$ in $[0,+\infty )$ satisfying
\begin{equation*}
\frac{|r_{n} - r_{n-1}|}{d^{p}(x_{n},x_{n-1})}\geq \frac{1}{t},\quad
n\in \mathbb{N},
\end{equation*}
for some $t>0$. Then, if we set $\mathcal{N}=\{x_{n} \colon n\in
\mathbb{N}\cup \{0\}\}$, we have:
\begin{enumerate}[(ii)]\item[(i)] The space $\mathcal{F}_{p}(\mathcal{N})$ is
${2^{1/p-1}}t$-isomorphic to $\ell _{p}$, and
\item[(ii)] The canonical map $L_{\jmath }\colon \mathcal{F}_{p}(
\mathcal{N}) \to \mathcal{F}_{p}(\mathcal{M})$ is an isomorphic
embedding, where $\jmath \colon \mathcal{N}\to \mathcal{M}$ is the
inclusion. Quantitatively,
$\Vert L_{\jmath }^{-1}\Vert \le {2^{1/p-1}}t$.
\end{enumerate}
\end{Proposition}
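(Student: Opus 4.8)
The plan is to linearize the $L_p$-valued Lipschitz maps produced by Lemma~\ref{lem:33} and to read off the $\ell_p$-structure from their disjointly supported images. I may assume that $(r_n)$ is nondecreasing with $r_0=0$ and that $x_0$ is the base point of $\mathcal{N}$, so that $\delta_{\mathcal{N}}(x_0)=0$. First I would invoke Lemma~\ref{lem:33} to obtain $1$-Lipschitz maps $f_n\colon(\mathcal{M},d)\to L_p(\mathbb{R})$ with pairwise disjoint supports and $f_n(x_n)=\chi_{(0,r_n]}$. Condition \eqref{eq:27} also forces $f_n(x_m)=0$ for $m\neq n$, since $d^p(x_m,x_n)\ge r_m+r_n\ge r_n$ makes the underlying function $g_n$ vanish at $x_m$. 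Setting $m_n=(\delta_{\mathcal{N}}(x_n)-\delta_{\mathcal{N}}(x_{n-1}))/d(x_n,x_{n-1})$ for $n\ge 1$, these molecules have norm one and their closed linear span is all of $\mathcal{F}_p(\mathcal{N})$, so $p$-subadditivity of the quasinorm immediately gives the upper estimate $\Vert\sum_n a_n m_n\Vert\le(\sum_n|a_n|^p)^{1/p}$.

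Next I would assemble the single map $F=\sum_n f_n\colon\mathcal{M}\to L_p(\mathbb{R})$. The crucial point is that $F$ is genuinely Lipschitz even though $\mathcal{M}$ carries no linear structure. Writing $f_n=\Phi\circ g_n$, where $\Phi(s)=\chi_{(0,s]}$ is the canonical isometry of $(\mathbb{R},|\cdot|^{1/p})$ into $L_p(\mathbb{R})$, the disjointness of the supports means that at most one summand is nonzero at each point, whence $F=\Phi\circ G$ with $G=\sum_n g_n$. Since the $g_n$ are nonnegative, disjointly supported, and $1$-Lipschitz into $(\mathbb{R},|\cdot|^{1/p})$, the function $G$ is itself $1$-Lipschitz: if $x,y$ lie in distinct supports $\operatorname{supp}_0 f_\alpha$ and $\operatorname{supp}_0 f_\beta$, then $g_\alpha(y)=g_\beta(x)=0$, so $0\le g_\alpha(x),g_\beta(y)\le d^p(x,y)$ and hence $|G(x)-G(y)|=|g_\alpha(x)-g_\beta(y)|\le d^p(x,y)$. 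Thus $F$ is Lipschitz and, by the universal property, induces a bounded operator $L_F\colon\mathcal{F}_p(\mathcal{M})\to L_p(\mathbb{R})$ with $\Vert L_F\Vert\le\operatorname{Lip}(F)$. This disjointness-plus-ray argument is precisely what substitutes for Lemma~\ref{l:sum-lip}, whose intermediate-point (segment) trick is unavailable on a mere $p$-metric space.

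Then I would evaluate $L_F$ on the molecules. Since $L_F(\delta_{\mathcal{M}}(x_n))=F(x_n)=\chi_{(0,r_n]}$, we obtain
\[
L_F(m_n)=\frac{\chi_{(0,r_n]}-\chi_{(0,r_{n-1}]}}{d(x_n,x_{n-1})}=\frac{\pm\,\chi_{(r_{n-1},r_n]}}{d(x_n,x_{n-1})},
\]
so the images are supported on the pairwise disjoint intervals $(r_{n-1},r_n]$ and satisfy $\Vert L_F(m_n)\Vert_{L_p}^p=|r_n-r_{n-1}|/d^p(x_n,x_{n-1})\ge 1/t$. Disjointness of supports in $L_p(\mathbb{R})$ then yields
\[
\Big\Vert L_F\Big(\sum_n a_n m_n\Big)\Big\Vert_{L_p}^p=\sum_n|a_n|^p\,\frac{|r_n-r_{n-1}|}{d^p(x_n,x_{n-1})}\ge \frac{1}{t}\sum_n|a_n|^p,
\]
and combining this with $\Vert L_F\Vert\le\operatorname{Lip}(F)$ gives the lower estimate $\Vert\sum_n a_n m_n\Vert\ge(\operatorname{Lip}(F))^{-1}t^{-1/p}(\sum_n|a_n|^p)^{1/p}$. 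Together with the upper estimate this shows $(m_n)$ is equivalent to the unit vector basis of $\ell_p$, proving (i). For (ii) I would use that $\Vert L_\jmath\Vert\le 1$ and that on $\mathcal{F}_p(\mathcal{N})$ the composition $L_F\circ L_\jmath$ sends the molecules to the very same disjointly supported functions, so the identical lower estimate controls $\Vert L_\jmath^{-1}\Vert$.

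The main obstacle is the Lipschitz bound for $F$: because $\mathcal{M}$ is only a $p$-metric space there is no segment on which to find an intermediate point outside the supports, so Lemma~\ref{l:sum-lip} cannot be applied directly and one is forced to exploit that every $f_n$ takes values among the initial segments $\chi_{(0,\cdot]}$ of one common ray, collapsing the sum to $\Phi\circ G$. The remaining point is purely quantitative, namely to track the interplay between $\operatorname{Lip}(F)$ and the factor $t^{-1}$ appearing at the level of $p$-th powers, and to optimize the elementary inequalities so as to reach the asserted equivalence constant $2^{1/p-1}t$.
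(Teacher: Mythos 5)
Your argument is correct and follows the paper's general skeleton (the molecules $m_n$, the cap functions of Lemma~\ref{lem:33}, the telescoping span argument, and the disjointly supported images $\pm\chi_{(r_{n-1},r_n]}$ in $L_p$), but it handles the one genuinely delicate step --- the Lipschitz bound for the sum of the $f_n$ --- by a different, and in fact sharper, mechanism. The paper first transfers the problem to the $p$-Banach space $X=\mathcal{F}_p(\mathcal{M})$ via the isometric embedding $\delta_{\mathcal{M}}$, precisely so that Lemma~\ref{l:sum-lip} becomes available (its proof picks an intermediate point on the segment $[x,y]$, hence needs linear structure), and then restricts $g=\sum_{n=0}^{N}f_n$ back to $\mathcal{M}$, paying the factor $2^{1/p-1}$. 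You instead observe that all the $f_n=\Phi\circ g_n$ take values on the single ray $\{\chi_{(0,s]}\colon s\ge 0\}$, so disjointness of supports collapses $F=\sum_n f_n$ to $\Phi\circ G$ with $G=\sum_n g_n$ real-valued, nonnegative and disjointly supported; your two-point estimate (from $g_\alpha(y)=g_\beta(x)=0$ one gets $0\le g_\alpha(x),g_\beta(y)\le d^p(x,y)$, hence $|G(x)-G(y)|\le d^p(x,y)$) shows $G$ is $1$-Lipschitz into $(\mathbb{R},|\cdot|^{1/p})$ using nothing but the $p$-triangle inequality. This buys two things: the argument lives on the bare $p$-metric space, with no detour through $\mathcal{F}_p(\mathcal{M})$, and it yields $\operatorname{Lip}(F)\le 1$ rather than $2^{1/p-1}$, hence a sharper lower estimate.

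Two bookkeeping points. First, your opening reduction ``nondecreasing with $r_0=0$'' is not justified as stated (a nonincreasing sequence cannot be relabelled into a nondecreasing one, and resetting $r_0=0$ can destroy the ratio hypothesis at $n=1$ in the nonincreasing case), but it is also never used: monotonicity alone makes the increments pairwise disjoint, and the base-point normalization $F(x_0)=0$ needed for the universal property is achieved by the shift $F\mapsto F-\chi_{(0,r_0]}$, which leaves every difference $F(x_n)-F(x_{n-1})$ untouched (the paper implicitly does the same, since only differences of $f$-values enter its display). Second, on constants: the hypothesis $|r_n-r_{n-1}|/d^p(x_n,x_{n-1})\ge 1/t$ gives the sharp lower bound $t^{-1/p}\left(\sum_n|a_n|^p\right)^{1/p}$, i.e.\ a $t^{1/p}$-isomorphism; since $t^{-1/p}\le t^{-1}$ when $t\ge 1$ (and $t\ge 1$ is forced by \eqref{eq:27} in nondegenerate situations), this recovers the stated constant $2^{1/p-1}t$ only when $t\le 2$. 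Note, however, that the paper's own final display performs exactly the same weakening $t^{-1/p}\ge t^{-1}$, which points the wrong way for $t>1$, so what its proof literally delivers is the constant $2^{1/p-1}t^{1/p}$; your bound is strictly better, and in the intended application (Lemma~\ref{lem:ToninIdea} supplies every $t>1$; see Theorem~\ref{thm:main2}) the discrepancy is immaterial, since one may simply run the argument with $t$ replaced by $t^p$.
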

\begin{proof}
We may assume that $x_{0}$ is the base point of $\mathcal{M}$. For each
$n\in \mathbb{N}$ put
\begin{align*}
v_{n}
&= \frac{\delta _{\mathcal{N}}(x_{n-1}) - \delta _{\mathcal{N}}(x
_{n})}{d(x_{n-1},x_{n})}\in \mathcal{F}_{p}(\mathcal{N}),
\\
\quad b_{n}
&= \frac{\delta _{\mathcal{M}}(x_{n-1}) - \delta _{
\mathcal{M}}(x_{n})}{d(x_{n-1},x_{n})}\in \mathcal{F}_{p}(\mathcal{M}).
\end{align*}
There is a norm-one linear map $T\colon \ell _{p}\to \mathcal{F}_{p}(
\mathcal{N})$ such that $T(\mathbf{e}_{n})=v_{n}$ for all $n\in
\mathbb{N}$. Since $L_{\jmath }(v_{n})=b_{n}$ for $n\in \mathbb{N}$ and
$\overline{\operatorname{\mathrm{{span}}}}\{v_{n}\colon n\in
\mathbb{N}\}=\mathcal{F}_{p}(\mathcal{N})$, it suffices to prove that
$(b_{n})_{n=1}^{\infty }$ is ${2^{1/p-1}}{t}$-dominated by the canonical
basis of $\ell _{p}$.

We consider $\mathcal{M}$ as a subset of $X=\mathcal{F}_{p}(
\mathcal{M})$. Let $(f_{n})_{n=0}^{\infty }$ be the sequence of maps
from $X$ into $L_{p}(\mathbb{R})$ provided by Lemma~\ref{lem:33}. Pick
any $a_{1},\dotsc ,a_{N}\in \mathbb{R}$ and consider $x=\sum _{n=1}
^{N} a_{n}\, b_{n}$. Let $g\colon X \to L_{p}(\mathbb{R})$ be defined
for $z\in \mathcal{F}_{p}(\mathcal{M})$ by
\begin{equation*}
g(z)=\sum _{n=0}^{N} f_{n}(z).
\end{equation*}
Lemma~\ref{l:sum-lip} yields that $g \colon X \to L_{p}(\mathbb{R})$ is
$2^{1/p-1}$-Lipschitz. Hence, $f = g|_{\mathcal{M}}$ is
$2^{1/p-1}$-Lipschitz. Since $f(x_{n})=\chi _{(0,r_{n}]}$ we have
\begin{align*}
\|x\|
& \geq \frac{2}{2^{1/p}}\left \|  \sum _{n=1}^{N} a_{n} \frac{ f(x
_{n-1})-f(x_{n})}{d(x_{n-1},x_{n})}\right \|  _{L_{p}}
\\
&=\frac{2}{2^{1/p}}\left \|  \sum _{n=1}^{N} a_{n} \frac{
\chi _{(r_{n-1},r_{n}]}}{d(x_{n-1},x_{n})}\right \|  _{L_{p}}
\\
& = \frac{2}{2^{1/p}} \left (\sum _{n=1}^{N}|a_{n}|^{p}\frac{|r_{n-1}
- r_{n}|}{d^{p}(x_{n-1},x_{n})}\right )^{1/p}
\\*
&\geq \frac{2}{2^{1/p}} \frac{1}{t} \left (\sum _{n=1}^{N}|a_{n}|^{p}\right )
^{1/p}.\qedhere
\end{align*}
\end{proof}

\begin{proof}[Proof of Theorem~\ref{thm:main2}]
It follows by combining 
Proposition~\ref{Thm:AnsoMichaLast} with Lemma~\ref{lem:ToninIdea}.
\end{proof}

\begin{proof}[Proof of Theorem~\ref{thm:main}]
By Theorem~\ref{thm:discretePoints} we can assume that there is a limit
point in the completion of $\mathcal{M}$. Then, the result follows from
Theorem~\ref{thm:main2}.
\end{proof}

As we advertised, there is a non-locally convex version of James's
$\ell _{1}$ distortion theorem. As far as we know, this is the first time
that the validity of this result is explicitly stated and so we include
its proof for further reference.

\begin{Theorem}[James's $\ell _{p}$ distortion theorem for $0<p\le 1$]
\label{thm:james}
Let $X$ be a $p$-Banach space containing a normalized basic sequence  $(x_{j})_{j=1}^{\infty }$ 
which is equivalent to the canonical
$\ell _{p}$-basis, $0<p\le 1$. Then given $\epsilon >0$ there is a
normalized block basic sequence $(y_{k})_{k=1}^{\infty }$ of
$(x_{j})_{j=1}^{\infty }$ such that
\begin{equation*}
\left \|  \sum _{k=1}^{\infty }a_{k} y_{k}\right \|  \ge (1-\epsilon )
\left (\sum _{k=1}^{\infty }|a_{k}|^{p}\right )^{1/p},
\end{equation*}
for any sequence of scalars $(a_{k})_{k=1}^{\infty }\in c_{00}$.
\end{Theorem}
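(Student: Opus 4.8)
The plan is to transcribe the classical proof of James's $\ell_1$ distortion theorem, systematically replacing the $\ell_1$-mass $\sum_j|a_j|$ by the $p$-mass $(\sum_j|a_j|^p)^{1/p}$ and using the $p$-triangle law in place of the triangle inequality. First I would observe that for a \emph{normalized} block basic sequence the upper estimate $\|\sum_k a_k y_k\|\le(\sum_k|a_k|^p)^{1/p}$ is automatic from $p$-subadditivity of the quasinorm together with $\|y_k\|=1$, so only the lower estimate is at stake. Writing $c\in(0,1]$ for the lower equivalence constant of $(x_j)$ with the canonical $\ell_p$-basis, i.e. $\|\sum_j a_j x_j\|\ge c(\sum_j|a_j|^p)^{1/p}$, I define for each $n\in\mathbb{N}$
\[
\lambda_n=\inf\Big\{\Big\|\sum_{j\ge n}a_j x_j\Big\|:\Big(\sum_{j\ge n}|a_j|^p\Big)^{1/p}=1,\ (a_j)\in c_{00}\Big\}.
\]
Testing against a single $x_n$ gives $\lambda_n\le 1$, the equivalence gives $\lambda_n\ge c>0$, and $(\lambda_n)$ is nondecreasing since enlarging $n$ shrinks the admissible set; hence $\lambda_n\uparrow\lambda$ for some $\lambda\in[c,1]$, and in particular $\lambda>0$.

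Next, fixing $\epsilon>0$, I would choose $\eta,\delta>0$ with $(1-\eta)/(1+\delta)\ge 1-\epsilon$ and then pick $N$ with $\lambda_N>\lambda(1-\eta)$. The block basic sequence is built recursively: setting $n_1=N$, and having selected $u_1,\dots,u_{k-1}$ supported on indices $\le m_{k-1}$, I put $n_k=m_{k-1}+1$ (so $n_k\ge N$) and, invoking the definition of the infimum $\lambda_{n_k}$, choose a finitely supported block $u_k=\sum_{j=n_k}^{m_k}b_j^{(k)}x_j$ with $(\sum_j|b_j^{(k)}|^p)^{1/p}=1$ and $\lambda_{n_k}\le\|u_k\|\le\lambda_{n_k}(1+\delta)$. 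Normalizing, I set $y_k=u_k/\|u_k\|$; these constitute a normalized block basic sequence of $(x_j)$ all of whose supports lie in $\{N,N+1,\dots\}$.

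Finally, for scalars $(a_k)\in c_{00}$ the vector $z=\sum_k a_k y_k=\sum_k\sum_{j=n_k}^{m_k}(a_k b_j^{(k)}/\|u_k\|)\,x_j$ is supported in $\{N,N+1,\dots\}$, so the defining property of $\lambda_{n_1}\ge\lambda_N$ applies; combining it with $\sum_j|b_j^{(k)}|^p=1$ and $\|u_k\|\le\lambda(1+\delta)$ (valid because $\lambda_{n_k}\le\lambda$) yields
\[
\|z\|\ge\lambda_{n_1}\Big(\sum_k\frac{|a_k|^p}{\|u_k\|^p}\Big)^{1/p}\ge\frac{\lambda_{n_1}}{\lambda(1+\delta)}\Big(\sum_k|a_k|^p\Big)^{1/p}\ge\frac{1-\eta}{1+\delta}\Big(\sum_k|a_k|^p\Big)^{1/p},
\]
which is at least $(1-\epsilon)(\sum_k|a_k|^p)^{1/p}$, as required. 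The crux of the argument, and the only genuinely delicate point, is the bookkeeping of the two occurrences of $\lambda$: forcing each $u_k$ to be nearly extremal for the infimum makes the normalizing factor $1/\|u_k\|$ close to $1/\lambda$, which exactly cancels the factor $\lambda_{n_1}\approx\lambda$ coming from the global lower bound, so the product tends to $1$ rather than to $\lambda$. Everything else is a routine replacement of absolute values by $p$-th powers in the $p=1$ argument.
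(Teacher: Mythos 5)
Your proof is correct and takes essentially the same approach as the paper: your $\lambda_n$ is precisely the reciprocal of the paper's constants $M_n$ (the least constant with $(\sum_{j>n}|a_j|^p)^{1/p}\le M_n\|\sum_j a_j x_j\|$, which decrease to $M=1/\lambda$), and your selection of unit-$p$-mass blocks nearly attaining the infimum $\lambda_{n_k}$, followed by normalization, is the same near-extremal-block cancellation the paper implements by choosing norm-one blocks whose coefficient $p$-mass exceeds $(1-\epsilon)^{1/2}M$. The only differences are cosmetic, namely your $\eta,\delta$ bookkeeping versus the paper's symmetric $(1-\epsilon)^{1/2}$ split of the error.
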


\begin{proof}
By hypothesis, there is $M_{0}<\infty $ such that
\begin{equation*}
\left (\sum _{j=1}^{\infty }|a_{j}|^{p}\right )^{1/p}\le M_{0}\left \|
\sum _{j=1}^{\infty }a_{j} x_{j}\right \|
\end{equation*}
for every $(a_{j})_{j=1}^{\infty }\in c_{00}$. Hence, for each integer
$n$ we can consider the least constant $M_{n}$ so that if $(a_{j})_{j=1}
^{\infty }\in c_{00}$ with $a_{j}=0$ for $j\le n$ then
\begin{equation*}
\left (\sum _{j=1}^{\infty }|a_{j}|^{p}\right )^{1/p}\le M_{n}\left \|
\sum _{j=1}^{\infty }a_{j} x_{j}\right \|  .
\end{equation*}
The sequence $(M_{n})_{n=1}^{\infty }$ is decreasing, and the inequality
\begin{equation*}
\left \Vert \sum _{j=1}^{\infty }a_{j}x_{j}\right \Vert \le \left (\sum
_{j=1}^{\infty }|a_{j}|^{p}\right )^{1/p}
\end{equation*}
yields $M_{n}\ge 1$. Let $M=\lim _{n\to \infty }M_{n}\ge 1$. We
recursively construct an increasing sequence $(n_{k})_{k=0}^{\infty }$
of positive integers and a sequence $(y_{k})_{k=1}^{\infty }$ in $X$.
We start by choosing $n_{0}\in \mathbb{N}$ such that
\begin{equation*}
M_{n_{0}}\le (1-\epsilon )^{-1/2}M.
\end{equation*}
Let $k\in \mathbb{N}$ and assume that $n_{i}$ and $y_{i}$ are
constructed for $i<k$. Since $(1-\epsilon )^{1/2}M <M_{n_{k-1}}$ there
is $n_{k}>n_{k-1}$ and a norm-one vector $ y_{k}=\sum _{j=1+n_{k-1}}
^{n_{k}}b_{j}\, x_{j}\in X $ such that
\begin{equation*}
\left ( \sum _{j=1+n_{k-1}}^{n_{k}}|b_{j}|^{p}\right )^{1/p}\ge (1-
\epsilon )^{1/2}M.
\end{equation*}
The normalized block basic sequence $(y_{k})_{k=1}^{\infty }$ satisfies
the desired property. Indeed, for any $(a_{k})_{k=1}^{\infty }\in c
_{00}$ we have
\begin{align*}
\left \|  \sum _{k=1}^{\infty }a_{k}\, y_{k}\right \|
& =\left \|
\sum _{k=1}^{\infty }\sum _{j=1+n_{k-1}}^{n_{k}} a_{k} \, b_{j} \, x
_{j}\right \|
\\
&\ge M_{n_{0}}^{-1} \left (\sum _{k=1}^{\infty }\sum _{j=1+n_{k-1}}
^{n_{k}} |a_{k}|^{p} \, |b_{j}|^{p}\right )^{1/p}
\\
&\ge (1-\epsilon )^{1/2}M M_{n_{0}}^{-1} \left (\sum _{k=1}^{\infty }|a
_{k}|^{p}\right )^{1/p}
\\
&\ge (1-\epsilon ) \left (\sum _{k=1}^{\infty }|a_{k}|^{p}\right )
^{1/p}. \qedhere
\end{align*}
\end{proof}

In \cite{DRT} the following improvement of James's $\ell _{1}$
distorsion theorem is obtained: whenever a Banach space $X$ has a
complemented subspace isomorphic to $\ell _{1}$, then $X$ has for each
$\varepsilon >0$ a $(1+\varepsilon )$-complemented subspace
$(1+\varepsilon )$--isomorphic to $\ell _{1}$. Since the proof of this
fact relies on duality techniques we wonder whether there is an analogue
for $p$-Banach spaces.

\begin{Question}
Does there exist $C\geq 1$ such that whenever a $p$-Banach space $X$ has
a complemented subspace isomorphic to $\ell _{p}$, then it has a
$C$-complemented subspace $C$--isomorphic to $\ell _{p}$?
\end{Question}

\section{Bases in $\mathcal{F}_{p}(\mathbb{N})$ and $\mathcal{F}_{p}([0,1])$}\label{Sect:bases}
\noindent
Given $0<p\le 1$, $n\in \mathbb{N}$, and $K\subseteq \mathbb{R}^{n}$ let
us denote by $\mathcal{F}_{p}(K)$ the Lipschitz free $p$-space over
$K$ equipped with the Euclidean metric. In this section we address the
study of Schauder bases for the Lipschitz free $p$-spaces $
\mathcal{F}_{p}(\mathbb{N})$ and $\mathcal{F}_{p}([0,1])$. If $p=1$
those spaces are isometric to $\ell _{1}$ and $L_{1}$, respectively.
However, for $p<1$ we obtain an interesting family of non-classical
$p$-Banach spaces. One of the most important results of this section is
that $\mathcal{F}_{p}([0,1])$ has a Schauder basis for $0<p<1$.
This provides examples of $p$-Banach spaces that are not Banach spaces, which have a basis but cannot have an unconditional basis.
Indeed, by \cite{AACD2018}*{Proposition 4.20}, the Banach envelope of
$\mathcal{F}_{p}([0,1])$ is isometrically isomorphic to $\mathcal{F}(
[0,1] )\simeq L_{1}[0,1]$, which does not have an unconditional basis
(see, e.g., \cite{AlbiacKalton2016}*{Theorem 6.3.3}).
Therefore, $\mathcal{F}_{p}([0,1])$ cannot have an unconditional basis either
(see \cite{AABW2019}*{Proposition 9.9}).

First, we deal with $\mathcal{F}_{p}( \mathbb{N}_{*})$, where
$\mathbb{N}_{*}:=\mathbb{N}\cup \{0\}$. Of course, $\mathbb{N}_{*}$ and
$\mathbb{N}$ are isometric as metric spaces, but we prefer to work with
the former and choose $0$ as its base point.

\begin{Lemma}
\label{lem:bases:1}Let $(a_{n})_{n=1}^{\infty }$ be an eventually null sequence of scalars.
Then
\begin{equation*}
\left \Vert \sum _{n=1}^{\infty }a_{n}\, \delta (n) \right \Vert _{
\mathcal{F}_{p}(\mathbb{N}_{*})} \ge \left (\sum _{\substack{n\in
\mathbb{N}
\\
a_{n}\ge 0}} a_{n}^{p}\right )^{1/p}.
\end{equation*}
\end{Lemma}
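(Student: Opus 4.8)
The plan is to bound the norm directly from its defining infimum over molecule representations, rather than through a Lipschitz map into $L_p$. Recall (as in the proof of Lemma~\ref{lem:isoWithELLPSum}, via \cite{AACD2018}*{Corollary 4.11}) that the normalized molecules $m_{x,y}:=(\delta(x)-\delta(y))/|x-y|$, with $x\neq y$ in $\mathbb{N}_*$, form an isometric $p$-norming set for $\mathcal{F}_p(\mathbb{N}_*)$, so that the unit ball is $\overline{\operatorname{\mathrm{co}}_p}$ of the molecules and hence
\[
\Big\|\sum_{n}a_n\,\delta(n)\Big\|_{\mathcal{F}_p(\mathbb{N}_*)}^{p}=\inf\Big\{\sum_i|\lambda_i|^{p}\colon \sum_i\lambda_i\,m_{x_i,y_i}=\sum_n a_n\,\delta(n)\Big\}.
\]
It therefore suffices to show that every such representation satisfies $\sum_i|\lambda_i|^p\ge\sum_{a_n\ge0}a_n^p$. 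First I would orient each molecule so that all coefficients are nonnegative: since $m_{y,x}=-m_{x,y}$, replacing $(\lambda_i,m_{x_i,y_i})$ by $(-\lambda_i,m_{y_i,x_i})$ whenever $\lambda_i<0$ leaves $\sum_i|\lambda_i|^p$ unchanged and makes all $\lambda_i\ge0$.

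The key step is a node-by-node book-keeping. Testing against $\chi_{\{n\}}\in\mathrm{Lip}_0(\mathbb{N}_*,\mathbb{R})$, which is $1$-Lipschitz and takes the value $a_n$ on $\sum_n a_n\,\delta(n)$, shows that the coefficient functional $\sum_n a_n\,\delta(n)\mapsto a_n$ is continuous of norm $\le 1$; applying it to the representation yields the conservation law $a_n=\sum_{x_i=n}\lambda_i/|x_i-y_i|-\sum_{y_i=n}\lambda_i/|x_i-y_i|$. Hence, for each $n$ with $a_n>0$, using that distinct integers satisfy $|x_i-y_i|\ge1$ and therefore $|x_i-y_i|^{p}\ge1$,
\[
a_n\le\sum_{x_i=n}\frac{\lambda_i}{|x_i-y_i|}\le\Big(\sum_{x_i=n}\frac{\lambda_i^{p}}{|x_i-y_i|^{p}}\Big)^{1/p}\le\Big(\sum_{x_i=n}\lambda_i^{p}\Big)^{1/p},
\]
where the middle inequality is the elementary fact that $\sum c_i\le(\sum c_i^{p})^{1/p}$ for $c_i\ge0$ and $0<p\le1$. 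Raising to the power $p$ and summing over those $n$ with $a_n>0$, whose index sets $\{i\colon x_i=n\}$ are pairwise disjoint, gives $\sum_{a_n>0}a_n^{p}\le\sum_i\lambda_i^{p}$; since the terms with $a_n=0$ contribute nothing, this is precisely the claimed estimate after passing to the infimum.

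The main obstacle is conceptual rather than computational. The paper's default tool for extracting $\ell_p$-type lower bounds is to construct a Lipschitz map into $L_p$ (compare Lemma~\ref{lem:33}), but such maps cannot attain the sharp constant $1$ here: the \emph{global} Lipschitz constraint over all of $\mathbb{N}_*$ penalizes exactly the disjoint placement in $L_p$ that would be needed to recover a full $p$-sum, and already for $\delta(1)+\delta(2)$ one checks that no $L_p$-valued map certifies the bound for small $p$. Arguing at the level of molecule representations circumvents this obstruction completely. The only points that require genuine care are the justification of the infimum formula for the $p$-norm, which is granted by the isometric $p$-norming property of the molecules, and the validity of the conservation law for possibly infinite (convergent) representations, which is secured by the continuity of the coefficient functionals $\chi_{\{n\}}$; the remaining inequalities are the standard $p$-subadditivity estimates.
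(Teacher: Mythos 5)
Your proof is correct, but it takes a genuinely different route from the paper's. The paper disposes of the lemma in two lines: since the $\{0,1\}$-metric $\alpha $ on $\mathbb{N}_{*}$ satisfies $\alpha (n,m)\le |n-m|$, the identity map induces a norm-one linear operator from $\mathcal{F}_{p}(\mathbb{N}_{*})$ onto $\mathcal{F}_{p}(\mathbb{N}_{*},\alpha )$, and the norm on the free $p$-space over this equilateral space is computed exactly in \cite{AACD2018}*{Proposition 4.16}, which already contains the stated lower bound. Your molecular-representation argument in effect reproves from scratch the half of that cited computation which is needed here: the description of $\Vert \mu \Vert ^{p}$ as an infimum over countable molecular representations (valid, as you note, by the isometric $p$-norming property, though strictly speaking one needs the standard geometric-series iteration converting membership in the \emph{closed} $p$-convex hull into a convergent series with $p$-sum at most $\Vert \mu \Vert ^{p}+\varepsilon $), the conservation law obtained from the norm-one functionals $\chi _{\{n\}}\in \mathrm{Lip}_{0}(\mathbb{N}_{*},\mathbb{R})$, and the estimate $a_{n}^{p}\le \sum _{x_{i}=n}\lambda _{i}^{p}$ via $\Vert \cdot \Vert _{1}\le \Vert \cdot \Vert _{p}$ together with $|x_{i}-y_{i}|\ge 1$ --- this last inequality is precisely where your argument silently absorbs the paper's metric-comparison step. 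Two small remarks: the splitting of the conservation-law series into the parts with $x_{i}=n$ and $y_{i}=n$ requires absolute convergence, which holds because $\sum _{i}|\lambda _{i}|\le (\sum _{i}|\lambda _{i}|^{p})^{1/p}<\infty $ for $p\le 1$, so you should say so; and your ``obstacle'' paragraph mischaracterizes the paper, which does not use $L_{p}$-valued Lipschitz maps for this lemma (that device appears only later, in Lemma~\ref{lem:33}). What your approach buys is self-containedness and, by symmetry in the orientation step, the formally stronger bound $\Vert \sum _{n}a_{n}\delta (n)\Vert ^{p}\ge \max \bigl (\sum _{a_{n}\ge 0}a_{n}^{p},\, \sum _{a_{n}\le 0}|a_{n}|^{p}\bigr )$; what the paper's route buys is brevity, by outsourcing the exact computation to the earlier work.
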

\begin{proof}
Let $\alpha $ be the $\{0,1\}$-metric on $\mathbb{N}_{*}$. Since
$\alpha (n,m)\le | n-m|$ for every $n,m\in \mathbb{N}_{*}$, we have
\begin{equation*}
\Vert f \Vert _{\mathcal{F}_{p}(\mathbb{N}_{*},\alpha )}\le \Vert f
\Vert _{\mathcal{F}_{p}(\mathbb{N}_{*})}
\end{equation*}
for every $f\in \mathcal{P}(\mathbb{N}_{*})$. An appeal to
\cite{AACD2018}*{Proposition 4.16} completes the proof.
\end{proof}
By $\mathbb{Z}[k,m]$ we denote the set $\{n\in \mathbb{Z}\colon k
\le n \le m\}$ whenever $k,m\in \mathbb{Z}$. Let us see a preliminary
result that we will need.
Notice that a Schauder basis $\mathcal{B}$ of a quasi-Banach space $X$ is also a basis when regarded inside the Banach envelope of $X$
(see \cite{AABW2019}*{Proposition 9.9}).

\begin{Theorem}
\label{thm:BasisFpN}Let $0<p\le 1$ and $\mathcal{B}=(\mathbf{x}_{n})_{n=1}^{\infty }$ be the
sequence in $\mathcal{F}_{p}(\mathbb{N}_{*})$ defined by $\mathbf{x}
_{n}=\delta (n)-\delta (n-1)$. Then:
\begin{enumerate}[(a)]\item[(a)]$\mathcal{B}$ is a normalized bi-monotone basis of
$\mathcal{F}_{p}(\mathbb{N}_{*})$.
\item[(b)] For every $m\in \mathbb{N}$, $
\operatorname{\mathrm{{span}}}(\{ \mathbf{x}_{n} \colon 1\le n \le m
\})\simeq \mathcal{F}_{p}(\mathbb{Z}[0,m])$ isometrically.
\item[(c)] The subbases $(\mathbf{x}_{2k-1})_{n=1}^{\infty }$ and
$(\mathbf{x}_{2k})_{n=1}^{\infty }$ are isometrically equivalent to the
unit vector basis of $\ell _{p}$.
\item[(d)]If $1\le k \le m$, then $\Vert \sum _{n=k+1}^{m} \mathbf{x}
_{n}\Vert = m-k$.
\item[(e)] If $p<1$ then $\mathcal{B}$ is a conditional basis.
\item[(f)]The sequence $\mathcal{B}$, regarded as a basis of the Banach
envelope of $\mathcal{F}_{p}(\mathbb{N}_{*})$, is isometrically
equivalent to the unit vector basis of $\ell _{1}$.
\end{enumerate}
\end{Theorem}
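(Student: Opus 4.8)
The plan is to base everything on the telescoping identity $\delta(n)=\sum_{k=1}^{n}\mathbf{x}_k$, which holds because $\delta(0)=0$. This gives $\operatorname{span}\{\mathbf{x}_k:k\le m\}=\operatorname{span}\{\delta(k):1\le k\le m\}$, so $\mathcal B$ has dense linear span, and normalization is immediate from $\|\mathbf{x}_n\|=\|\delta(n)-\delta(n-1)\|=d(n,n-1)=1$. For (a) and (b) I would manufacture the partial–sum projections from truncations. For fixed $m$ the map $r_m(k)=\min(k,m)$ is a $1$-Lipschitz retraction of $\mathbb{N}_*$ onto $\mathbb{Z}[0,m]$; composing $L_{r_m}$ with the canonical embedding $L_\jmath$ of $\mathcal{F}_p(\mathbb{Z}[0,m])$ produces a norm-one operator $S_m$ with $S_m\delta(k)=\delta(\min(k,m))$, hence $S_m\mathbf{x}_n=\mathbf{x}_n$ for $n\le m$ and $S_m\mathbf{x}_n=0$ for $n>m$. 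Since $L_{r_m}L_\jmath=\operatorname{Id}$, the embedding $L_\jmath$ is isometric onto $\operatorname{span}\{\mathbf{x}_1,\dots,\mathbf{x}_m\}$, which is exactly (b). The $S_m$ commute with $S_mS_l=S_{\min(m,l)}$, are bounded by $1$, and converge strongly to the identity on the dense set of Diracs, so $\mathcal B$ is a Schauder basis with monotone constant $1$.

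For bi-monotonicity I would realize each interval projection $S_n-S_m$ (with $m<n$) as a product of two contractions: the clamp $c(k)=\max(m,\min(k,n))$ is a $1$-Lipschitz base-point preserving map into $\mathbb{Z}[m,n]$ (base point $m$), while $k\mapsto\delta(k)-\delta(m)$ is an isometric base-point preserving map of $\mathbb{Z}[m,n]$ into $\mathcal{F}_p(\mathbb{N}_*)$; the composition of the induced operators equals $S_n-S_m$, giving $\|S_n-S_m\|\le 1$ (the case $m=0$ recovers $\|S_n\|\le 1$). This completes (a). Part (d) is then just telescoping: $\sum_{n=k+1}^{m}\mathbf{x}_n=\delta(m)-\delta(k)$ has norm $d(m,k)=m-k$. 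For (f), by \cite{AACD2018}*{Proposition 4.20} the Banach envelope of $\mathcal{F}_p(\mathbb{N}_*)$ is $\mathcal{F}(\mathbb{N}_*)$; the bound $\|\sum a_n\mathbf{x}_n\|\le\sum|a_n|$ is the triangle inequality, and the reverse bound comes from testing against the $1$-Lipschitz $f$ with $f(0)=0$ and $f(n)-f(n-1)=\operatorname{sign}(a_n)$, for which $\langle f,\sum a_n\mathbf{x}_n\rangle=\sum|a_n|$, so $\mathcal B$ is isometrically the $\ell_1$-basis of the envelope.

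The substantive parts are (c) and (e), and both rest on Lemma~\ref{lem:bases:1}. For (c) the upper estimate $\|\sum_k a_k\mathbf{x}_{2k-1}\|\le(\sum_k|a_k|^p)^{1/p}$ is the $p$-triangle inequality applied to the normalized molecules $\mathbf{x}_{2k-1}$. For the reverse inequality I would expand $v=\sum_k a_k\mathbf{x}_{2k-1}$ in the Dirac basis: the coordinate of $\delta(2k-1)$ is $a_k$ and that of $\delta(2k-2)$ is $-a_k$. Applying Lemma~\ref{lem:bases:1} to $v$ captures $|a_k|^p$ for each $k$ with $a_k\ge 0$, and applying it to $-v$ captures it for each $k$ with $a_k\le 0$; taking the larger of the two lower bounds recovers $(\sum_k|a_k|^p)^{1/p}$. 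I expect the only real subtlety here to be the boundary edge $k=1$, where $\delta(0)=0$ deletes one coordinate — this is precisely why one must test both $v$ and $-v$ rather than $v$ alone. The even subbasis $(\mathbf{x}_{2k})$ has both endpoints genuine and is handled the same way, even more directly.

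For (e) I would exhibit one coefficient vector on which the unconditionality constant blows up. Take all coefficients equal to $1$ on $\{1,\dots,2N\}$: with all signs $+$, telescoping and (d) give norm $2N$, whereas with the alternating signs $\epsilon_{2k-1}=+1$, $\epsilon_{2k}=-1$, expanding in the Dirac basis makes the coordinate of each $\delta(2k-1)$ equal to $+2$, so Lemma~\ref{lem:bases:1} forces the norm to be at least $(N\,2^p)^{1/p}=2N^{1/p}$. Any unconditional constant $C$ would then satisfy $2N^{1/p}\le C\cdot 2N$, i.e. $C\ge N^{1/p-1}\to\infty$ since $p<1$, so $\mathcal B$ is conditional. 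The main obstacle throughout is obtaining sharp \emph{lower} bounds: unlike $p=1$ there is no rich supply of real-valued Lipschitz functions on $\mathbb{N}_*$ detecting the full $p$-norm, and the isometric bound in (c) together with the exact growth rate in (e) both depend essentially on the comparison with the $\{0,1\}$-metric encapsulated in Lemma~\ref{lem:bases:1}.
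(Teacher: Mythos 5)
Your overall architecture coincides with the paper's proof. Parts (a) and (b) are obtained exactly as in the paper, via the $1$-Lipschitz clamps $n\mapsto\max\{k,\min\{n,m\}\}$, the induced norm-one operators, and the identity $L_{r_m}\circ L_{\jmath}=\operatorname{\mathrm{Id}}$; your factorization of $S_n-S_m$ through $\mathcal{F}_p(\mathbb{Z}[m,n])$ is the paper's interval projection $P[k,m]$ in disguise. Part (d) is the same telescoping. Your (e) is a correct quantitative variant of the paper's argument: the paper derives a contradiction from (c) and (d) abstractly, while your alternating-sign vector gives the explicit lower estimate $C\ge N^{1/p-1}$ for the unconditionality constant, with the bound $2N^{1/p}$ following correctly from Lemma~\ref{lem:bases:1} applied to the coefficients $+2$ at the odd Diracs. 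Your (f) replaces the paper's citation of Godard's result by a direct duality computation with the $1$-Lipschitz function of increments $\operatorname{sign}(a_n)$; this is fine and self-contained.

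There is, however, a genuine flaw in your written argument for (c). You claim that applying Lemma~\ref{lem:bases:1} to $v$ captures $|a_k|^p$ for each $k$ with $a_k\ge 0$, applying it to $-v$ captures it for each $k$ with $a_k\le 0$, and that ``taking the larger of the two lower bounds recovers $(\sum_k|a_k|^p)^{1/p}$.'' That inference fails: the maximum of $\bigl(\sum_{a_k\ge0}|a_k|^p\bigr)^{1/p}$ and $\bigl(\sum_{a_k\le0}|a_k|^p\bigr)^{1/p}$ is strictly smaller than the full sum whenever both sign classes occur. Concretely, for $v=\mathbf{x}_1-\mathbf{x}_3=\delta(1)+\delta(2)-\delta(3)$ your two stated bounds are each $1$, while the claimed conclusion is $2^{1/p}$; as written, your step yields only a $2^{1/p}$-equivalence, not the isometric statement. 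The correct accounting---which is what the paper's computation does---is that a \emph{single} application of the lemma already captures $|a_k|^p$ for every $k$ regardless of sign, because each block places a positive coefficient at one of its two Dirac coordinates: $a_k$ at $\delta(2k-1)$ if $a_k>0$, and $-a_k$ at $\delta(2k-2)$ if $a_k<0$. For the even subbasis this settles (c) in one stroke; for the odd subbasis the only term that can be lost is the boundary term $k=1$ when $a_1<0$ (since $\delta(0)=0$), and then one applies the lemma to $-v$ instead. So the role of $\pm v$ is to choose the sign according to $\operatorname{sign}(a_1)$, after which \emph{one} of the two applications gives the full bound---not to merge two one-sided bounds by a maximum. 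Your closing remark about the deleted coordinate at $k=1$ shows you sensed the right mechanism, but the step as stated does not prove (c), and it would also propagate into your proof of (e) if (e) were derived from (c) rather than from the direct estimate you gave.
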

\begin{proof}
Let $0\le k <m$. The map
\begin{equation*}
r[k,m]\colon \mathbb{N}_{*} \to \mathbb{N}_{*} \colon n \mapsto
\max \{ k, \min \{n,m\}\}
\end{equation*}
is a $1$-Lipschitz retraction from $\mathbb{N}_{*}$ onto
$\mathbb{Z}[k,m]$. Then there is a norm-one linear projection
$P[k,m]$ from $\mathcal{F}_{p}(\mathbb{N}_{*})$ onto itself such that
\begin{equation*}
P[k,m](\delta (n))=
\begin{cases}
0
& \text{ if } n \le k,
\\
\delta (m)-\delta (k)
& \text{ if } n\ge m,
\\
\delta (n)-\delta (k)
& \text{ if } k\le n\le m,
\end{cases}
\end{equation*}
and an isometric linear embedding $L[k,m]\colon \mathcal{F}_{p}(
\mathbb{Z}[k,m])\to \mathcal{F}_{p}(\mathbb{N}_{*})$ such that, if we
choose $k$ as the base point of $\mathbb{Z}[k,m]$,
\begin{equation*}
L[k,m](\delta (n))=\delta (n)-\delta (k).
\end{equation*}
We deduce from the case $k=0$ that (b) holds. Moreover, if we put
$P_{m}=P[0,m]$ and $V_{m}=P_{m}(\mathcal{F}_{p}(\mathbb{N}_{*}))$, we
have
\begin{equation*}
V_{m}=\operatorname{\mathrm{{span}}}(\{ \delta (n)\colon 1\le n
\le m \})
=\operatorname{\mathrm{{span}}}(\{ \mathbf{x}_{n} \colon 1
\le n \le m \}).
\end{equation*}
Consequently, $\dim V_{m}=m$ and $\cup _{m=1}^{\infty }V_{m}$ is dense
is $\mathcal{F}_{p}(\mathbb{N}_{*})$. It is clear that $P_{m}\circ P
_{j}=P_{\min \{ j, m \}}$ and that $P_{m}(\mathbf{x}_{n})=\mathbf{x}
_{n}$ if $m>n$. Hence, $\mathcal{B}$ is a Schauder basis with
partial-sum projections $(P_{m})_{m=1}^{\infty }$. As
$P_{m}-P_{k}=P[k+1,m]$, the basis $\mathcal{B}$ is bi-monotone. Since
$\sum _{n=k+1}^{m} \mathbf{x}_{n}=\delta (m)-\delta (k)$, we have $
\left \Vert \sum _{n=k+1}^{m} \mathbf{x}_{n}\right \Vert = m-k $ for
every $0\le k<m$. In particular, $\Vert {\mathbf{x}_{n}}
\Vert _{\mathcal{F}_{p}(\mathbb{N}_{*})}=1$ for every $n\in \mathbb{N}$.
Summarizing, we have proved that (a) and (d) hold.

Let $(a_{k})_{k=1}^{\infty }$ be eventually zero and consider
$A=\{ k \colon a_{k} > 0\}$ and $B=\{ k \colon a_{k} < 0\}$. 
Lemma~\ref{lem:bases:1} and $p$-convexity yield
\begin{align*}
\left ( \sum _{k=1}^{\infty }|a_{k}|^{p} \right )^{1/p}
&\ge \left \Vert \sum
_{k=1}^{\infty }a_{k} \, \mathbf{x}_{2k}\right \Vert _{\mathcal{F}
_{p}(\mathbb{N}_{*})}
\\
&=\left \Vert \sum _{k=1}^{\infty }a_{k} \, \delta (2k) - \sum _{k=1}
^{\infty }a_{k} \, \delta (2k-1) \right \Vert _{\mathcal{F}_{p}(
\mathbb{N}_{*})}
\\
&\ge \left ( \sum _{k\in A} a_{k}^{\, p} + \sum _{k\in B} (-a_{k})^{p}
\right )^{1/p}
\\
&= \left ( \sum _{k=1}^{\infty }|a_{k}|^{p} \right )^{1/p}.
\end{align*}
For the odd terms, we proceed analogously. Since $\mathcal{B}$ is
normalized, (c) holds. Part (e) is a consequence of the previous ones.
Indeed, if $\mathcal{B}$ were unconditional we would have
\begin{align*}
\left \Vert \sum _{k=1}^{\infty }a_{n} \, \mathbf{x}_{n}\right \Vert _{\mathcal{F}_{p}(\mathbb{N}_{*})}^{p}
&\approx \left \Vert \sum _{k=1}
^{\infty }a_{2k-1} \, \mathbf{x}_{2k-1}\right \Vert _{\mathcal{F}_{p}(
\mathbb{N}_{*})}^{p}
+\left \Vert \sum _{k=1}^{\infty }a_{2k} \,
\mathbf{x}_{2k}\right \Vert _{\mathcal{F}_{p}(\mathbb{N}_{*})}^{p}
\\
&=\sum _{n=1}^{\infty }|a_{n}|^{p}
\end{align*}
for all sequences of scalars $(a_{n})_{n=1}^{\infty }$ eventually zero.
In particular, by (d) we would have $m \approx m^{p}$ for $m\in
\mathbb{N}$, which is false unless $p=1$.

By \cite{AACD2018}*{Proposition 4.20}, $\mathcal{B}$ is, when regarded
in the Banach envelope, the sequence $(\delta (n)-\delta (n-1))_{n=1}
^{\infty }$ of the Banach space $\mathcal{F}(\mathbb{N}_{*})$. Hence,
(f) is known and follows, e.g., from the more general
\cite{Godard2010}*{Proposition 2.3}.
\end{proof}

As the attentive reader might have noticed, a result similar to
Theorem~\ref{thm:BasisFpN} holds for $\mathcal{F}_{p}(\mathbb{Z})$. Let
us point out that the existence of a Schauder basis for $\mathcal{F}
_{p}(\mathbb{Z})$ can also be deduced from the following general result.

\begin{Theorem}
\label{rem:freeZetD}
Let $\mathcal{M}$ be either a net on $c_{0}$ or a net on $\mathbb{R}
^{n}$, $n\in \mathbb{N}$. Then $\mathcal{F}_{p}(\mathcal{M})$ has a
Schauder basis.
\end{Theorem}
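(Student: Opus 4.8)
The plan is to reduce, in both cases, to a single conveniently chosen net and then to manufacture a Schauder basis out of coordinatewise ``clamping'' retractions. Since the free $p$-space depends on the metric only up to the isomorphism induced by the identity map, and the Euclidean and supremum metrics on $\mathbb{R}^{n}$ are bi-Lipschitz equivalent, I may replace the Euclidean distance by the $\ell_{\infty}$-distance throughout; $c_{0}$ already carries the supremum norm. Both $\mathbb{R}^{n}$ and $c_{0}$ are separable, so every net in them is countably infinite, that is, of cardinality $\operatorname{dens}$ of the ambient space. Hence, by Proposition~\ref{prop:nets}, for any net $\mathcal{M}$ it suffices to produce a Schauder basis for the free $p$-space of one convenient net, the property of having a basis being an isomorphic invariant. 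I take $\mathcal{L}=\mathbb{Z}^{n}$ in the case of $\mathbb{R}^{n}$, and $\mathcal{L}=\{x\in c_{0}\colon x_{i}\in\mathbb{Z},\ \#\operatorname{supp}(x)<\infty\}$ in the case of $c_{0}$; rounding each coordinate to the nearest integer shows that each is a net, and each is countable.

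Next I build an increasing sequence of finite retracts. For $\mathbb{Z}^{n}$ put $Q_{k}=\{-k,\dots,k\}^{n}$ and let $r_{k}$ be the coordinatewise clamp $r_{k}(x)_{i}=\max\{-k,\min\{k,x_{i}\}\}$; for the $c_{0}$-lattice put $F_{k}=\{x\in\mathcal{L}\colon \|x\|_{\infty}\le k,\ \operatorname{supp}(x)\subseteq\{1,\dots,k\}\}$ and let $r_{k}$ first restrict to the first $k$ coordinates and then clamp them to $[-k,k]$. In either case $r_{k}$ is a $1$-Lipschitz retraction of $\mathcal{L}$ onto a finite set, the retractions are \emph{compatible}, $r_{j}\circ r_{k}=r_{\min\{j,k\}}$, and the union of their ranges is all of $\mathcal{L}$. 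Consequently the induced norm-one linear projections $P_{k}:=L_{r_{k}}$ on $\mathcal{F}_{p}(\mathcal{L})$ satisfy $P_{j}P_{k}=P_{\min\{j,k\}}$, have increasing finite-dimensional ranges $\mathcal{F}_{p}(Q_{k})$ (resp.\ $\mathcal{F}_{p}(F_{k})$) with dense union, and converge strongly to the identity. Thus $(P_{k})$ is a monotone finite-dimensional Schauder decomposition of $\mathcal{F}_{p}(\mathcal{L})$, exactly as the interval clamps $r[0,m]$ produced the bi-monotone basis of $\mathcal{F}_{p}(\mathbb{N}_{*})$ in Theorem~\ref{thm:BasisFpN}.

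To upgrade this decomposition to an actual Schauder basis I refine it to unit increments. Concretely, I enumerate $\mathcal{L}=\{z_{0}=0,z_{1},z_{2},\dots\}$ compatibly with the boxes above (first $Q_{0}$, then the points of $Q_{1}\setminus Q_{0}$, and so on) and seek a compatible family of \emph{uniformly} Lipschitz retractions $r_{m}$ of $\mathcal{L}$ onto $F_{m}=\{z_{0},\dots,z_{m}\}$ with $r_{n}\circ r_{m}=r_{n}$ for $n\le m$. Given such a family, the operators $P_{m}=L_{r_{m}}$ satisfy $P_{m}P_{n}=P_{\min\{m,n\}}$ and $\dim\operatorname{ran}(P_{m})=m$, so that the vectors $\mathbf{e}_{m}=\delta(z_{m})-\delta(r_{m-1}(z_{m}))$ form a Schauder basis of $\mathcal{F}_{p}(\mathcal{L})$ with basis constant $\sup_{m}\|P_{m}\|$, which finishes the proof in view of the first paragraph.

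The heart of the matter, and the step I expect to be the main obstacle, is the construction of these unit-increment retractions: the coordinatewise clamp only enlarges a box by an entire facet at a time, so passing from $Q_{k-1}$ to $Q_{k}$ one point at a time forces one to interpolate \emph{within} each facet (or shell) while simultaneously preserving the uniform Lipschitz bound and the compatibility identity $r_{n}\circ r_{m}=r_{n}$. The natural device is, having already clamped onto the outer box, to send each not-yet-adjoined shell point to an already-present neighbour (its inward clamp, or an adjacent adjoined point), chosen so that the defining relation $r_{n}\circ r_{m}=r_{n}$ survives the refinement; that the resulting Lipschitz constants stay bounded uniformly in $m$ is where the $\ell_{\infty}$-lattice geometry common to $\mathbb{Z}^{n}$ and the $c_{0}$-lattice is used, and it is precisely this combinatorial bookkeeping that one must carry out carefully. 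Alternatively, one may keep the coarse decomposition of the second paragraph and instead equip each finite-dimensional block $\mathcal{F}_{p}(Q_{k})\ominus\mathcal{F}_{p}(Q_{k-1})$ (which by Theorem~\ref{thm:retr} is $\mathcal{F}_{p}(Q_{k}/Q_{k-1})$) with a basis of uniformly bounded basis constant, concatenating these to obtain the global basis; either route reduces the theorem to a uniform metric-geometric estimate on the lattice.
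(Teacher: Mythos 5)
Your skeleton is in fact the same as the paper's: reduce via Proposition~\ref{prop:nets} to one canonical lattice net ($\mathbb{Z}^{n}$, respectively the finitely supported integer lattice in $c_{0}$ --- every net in a separable unbounded space is countably infinite, so the cardinality hypothesis holds), and then extract a basis of the free $p$-space over the lattice from a compatible system of uniformly Lipschitz retractions onto an increasing chain of finite subsets growing by one point at a time. Both of these steps are correct as stated: the reduction is exactly how the paper proceeds, and the abstract implication ``compatible uniformly Lipschitz one-point retractions $\Rightarrow$ Schauder basis with constant $\sup_{m}\Vert P_{m}\Vert$'' is \cite{HN17}*{Theorem 13}, which, as the paper observes, uses only the universal property of free spaces and hence works verbatim for $p<1$. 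However, your argument stops precisely where the mathematical content of the theorem lies: the construction of the unit-increment retractions on $\mathbb{Z}^{n}$ and on the $c_{0}$-lattice with Lipschitz constants bounded uniformly in the stage (and, for $c_{0}$, independently of the growing number of active coordinates). You yourself flag this as ``the main obstacle'' and offer only a heuristic --- send each not-yet-adjoined shell point to ``its inward clamp, or an adjacent adjoined point'' --- without specifying the ordering of the shell, the image of each point at each intermediate stage, or verifying either the compatibility identity $r_{n}\circ r_{m}=r_{n}$ or a uniform Lipschitz bound. This is a genuine gap rather than routine bookkeeping: the facet-at-a-time clamps are $1$-Lipschitz, but naive one-point refinements can degrade the constant with the size of the shell, and the compatibility constraint couples all stages simultaneously. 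This is exactly the delicate combinatorial construction carried out in \cite{HN17}*{Corollaries 16 and 18}, which the paper's proof imports wholesale, checking only that no step there uses local convexity.

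Your fallback route --- keep the coarse decomposition and equip each block $\mathcal{F}_{p}(Q_{k})\ominus\mathcal{F}_{p}(Q_{k-1})\simeq\mathcal{F}_{p}(Q_{k}/Q_{k-1})$ (a legitimate identification with uniform constants, by Theorem~\ref{thm:retr} applied to the $1$-Lipschitz clamps) with bases of uniformly bounded basis constant --- would indeed suffice, but it is equally unproven: a uniform bound on the basis constants of the free $p$-spaces over these quotient shells is again a nontrivial metric-geometric assertion that does not follow from anything you wrote. So, as it stands, your proposal establishes the correct reduction together with a monotone finite-dimensional Schauder decomposition of $\mathcal{F}_{p}(\mathcal{M})$ --- strictly weaker than a Schauder basis --- and defers the essential step. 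To close the gap you should either reproduce the explicit retraction scheme of \cite{HN17} and verify, as the paper does, that it transfers to $p<1$, or carry out in full the shell-interpolation you sketched, including the choice of enumeration, the verification of $r_{n}\circ r_{m}=r_{n}$ for $n\le m$, and the uniform Lipschitz estimate.
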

\begin{proof}
The proof can be carried out exactly as in \cite{HN17}*{Corollaries~16 and 18}, with the exceptions that instead of
\cite{HN17}*{Proposition 5} we use Proposition~\ref{prop:nets} and
that we need to prove \cite{HN17}*{Theorem~13} also for $p<1$. As a matter of fact, the proof of \cite{HN17}*{Theorem~13} uses the
universal property of Lipschitz free spaces only, so the same proof
works even for $p<1$.
\end{proof}

Now, let us mention a preliminary result which concerns the structure
of $\mathcal{F}_{p}([0,1])$.

\begin{Lemma}
\label{lem:keyinterval}For each pair $(K_{1},K_{2})$ with $\{0,1\}\subseteq K_{2}\subseteq K
_{1}\subseteq [0,1]$ there is a linear operator $P_{K_{1},K_{2}}
\colon \mathcal{F}_{p}(K_{1})\to \mathcal{F}_{p}(K_{2})$ such that, if
$L_{K_{1},K_{2}}$ denotes the canonical linear map from $\mathcal{F}
_{p}(K_{2})$ into $\mathcal{F}_{p}(K_{1})$ and $ \{0,1\} \subseteq K
_{3}\subseteq K_{2}\subseteq K_{1}\subseteq [0,1]$,
\begin{enumerate}[(iii)]\item[(i)] $\Vert P_{K_{1},K_{2}}\Vert \le 3^{1/p-1}$ and $P_{K_{1},K
_{2}}\circ L_{K_{1},K_{2}} = \operatorname{\mathrm{{Id}}}_{
\mathcal{F}_{p}(K_{2})}$;
\item[(ii)] $P_{K_{2},K_{3}}\circ P_{K_{1},K_{2}}=P_{K_{1},K_{3}}$; and
\item[(iii)] $P_{K_{1},K_{3}}\circ L_{K_{1},K_{2}}=P_{K_{2},K_{3}}$ and
$P_{K_{1},K_{2}}\circ L_{K_{1},K_{3}}=L_{K_{2},K_{3}}$.
\end{enumerate}
Moreover, if $a<x<b$ are such that $[a,b] \cap K_{2}=\{ a, b\}$ and
$x\in K_{1}$, we have
\begin{equation*}
P_{K_{1},K_{2}} (\delta _{K_{1}}(x))=\frac{b-x}{b-a}\delta _{K_{2}}(a) +
\frac{x-a}{b-a} \delta _{K_{2}}(b).
\end{equation*}
\end{Lemma}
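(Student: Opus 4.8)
The plan is to construct the projection $P_{K_1,K_2}$ directly on the generators and verify the stated properties. Since $\mathcal{F}_p(K_1)=\overline{\operatorname{span}}\{\delta_{K_1}(x):x\in K_1\}$, it suffices to define a bounded linear map on these Dirac evaluations. The natural definition is exactly the formula given in the ``Moreover'' clause: for $x\in K_1$, locate the gap $(a,b)$ of $K_2$ containing $x$ (i.e.\ the unique pair $a<b$ in $K_2$ with $[a,b]\cap K_2=\{a,b\}$ and $a\le x\le b$), and set
\begin{equation*}
P_{K_1,K_2}(\delta_{K_1}(x))=\frac{b-x}{b-a}\,\delta_{K_2}(a)+\frac{x-a}{b-a}\,\delta_{K_2}(b),
\end{equation*}
with the convention that $P_{K_1,K_2}(\delta_{K_1}(x))=\delta_{K_2}(x)$ when $x\in K_2$. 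First I would realize this as arising from a Lipschitz map: consider $\varphi\colon K_1\to \mathcal{F}_p(K_2)$ defined by the right-hand side above. By the universal property of $\mathcal{F}_p(K_1)$ (used throughout the excerpt), $\varphi$ extends to a linear operator $P_{K_1,K_2}$ with norm equal to $\operatorname{Lip}(\varphi)$, so the entire burden of item (i) reduces to estimating this Lipschitz constant.

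The main obstacle, and the heart of the proof, is establishing the bound $\operatorname{Lip}(\varphi)\le 3^{1/p-1}$. I would split this into cases according to the relative position of two points $x,y\in K_1$. When $x,y$ lie in the same gap $(a,b)$ of $K_2$, the difference $\varphi(x)-\varphi(y)$ is a multiple of $\delta_{K_2}(b)-\delta_{K_2}(a)$, and one computes $\|\varphi(x)-\varphi(y)\|=\frac{|x-y|}{b-a}\|\delta_{K_2}(b)-\delta_{K_2}(a)\|=|x-y|$ since $\|\delta_{K_2}(b)-\delta_{K_2}(a)\|=b-a$. The genuinely delicate case is when $x$ and $y$ fall in distinct gaps, say $x\in(a,b)$ and $y\in(c,e)$ with $b\le c$; then $\varphi(x)-\varphi(y)$ is a $p$-convex-type combination of at most three Dirac differences ($\delta(a),\delta(b)$ on one side, $\delta(c),\delta(e)$ on the other, with $b,c$ possibly coinciding), and the factor $3^{1/p-1}$ is exactly the constant coming from the $p$-triangle inequality applied to three summands: $\|\sum_{i=1}^3 v_i\|^p\le 3^{?}$ is the wrong form, rather one uses $\|v_1+v_2+v_3\|\le 3^{1/p-1}(\|v_1\|+\|v_2\|+\|v_3\|)$, the three-term analogue of the inequality appearing in Lemma~\ref{l:sum-lip}. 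Carefully bounding the three contributions by $|x-y|$ (using that the intermediate gap lengths telescope) yields the claimed constant.

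Once $P_{K_1,K_2}$ is built, the algebraic identities are routine verifications on Diracs. For $P_{K_1,K_2}\circ L_{K_1,K_2}=\operatorname{Id}_{\mathcal{F}_p(K_2)}$ in (i), note that $L_{K_1,K_2}$ sends $\delta_{K_2}(x)$ to $\delta_{K_1}(x)$, and for $x\in K_2$ the defining formula degenerates to $P_{K_1,K_2}(\delta_{K_1}(x))=\delta_{K_2}(x)$, so the composition fixes every generator. The cocycle identity (ii), $P_{K_2,K_3}\circ P_{K_1,K_2}=P_{K_1,K_3}$, follows because applying the two-step interpolation to the barycentric formula reproduces the one-step interpolation: interpolating $x$ first onto the endpoints of its $K_2$-gap and then re-interpolating those onto the $K_3$-gap endpoints gives the same affine weights as interpolating $x$ directly onto its $K_3$-gap, since affine interpolation is transitive across refinements of the partition. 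The identities in (iii) are the mixed compatibility statements between $P$ and $L$ for a triple $K_3\subseteq K_2\subseteq K_1$, and each is checked by evaluating both sides on $\delta_{K_2}(x)$ or $\delta_{K_3}(x)$ respectively and using that $L$ is the canonical inclusion together with the degeneration of the interpolation formula on points already in the smaller set.

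I expect the three-gap Lipschitz estimate to be the only real work; everything else is bookkeeping on generators. One subtlety to handle with care is the boundary case $b=c$ (adjacent gaps sharing a common point of $K_2$), where the three-term bound collapses to a two-term bound and the constant improves, so it does not threaten the overall estimate. Throughout, the normalization $\|\delta_{K_2}(b)-\delta_{K_2}(a)\|=b-a$ for consecutive points of $K_2\subseteq[0,1]$ with the Euclidean metric is used repeatedly and follows from the fact, recorded in Theorem~\ref{thm:BasisFpN}(d), that segments in the real line have free-space norm equal to their length.
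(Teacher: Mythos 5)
Your construction is, in substance, exactly the paper's proof: the same barycentric interpolation formula $f_{K_1,K_2}(x)=\frac{b-x}{b-a}\delta_{K_2}(a)+\frac{x-a}{b-a}\delta_{K_2}(b)$ on Diracs, extended linearly via the universal property with norm equal to the Lipschitz constant; the same case split, with the same-gap case giving a local isometry; and in the general case the paper also routes the difference through the gap endpoints, choosing $a\le x\le b\le c\le y\le d$ in $K_2$ and estimating $\Vert f(x)-f(y)\Vert^p\le |x-b|^p+|b-c|^p+|c-y|^p\le 3^{1-p}|x-y|^p$ by telescoping and $p$-concavity --- which is precisely your three-term inequality $\Vert v_1+v_2+v_3\Vert\le 3^{1/p-1}(\Vert v_1\Vert+\Vert v_2\Vert+\Vert v_3\Vert)$ combined with $|x-b|+|b-c|+|c-y|=|x-y|$. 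The generator verifications also match: the paper proves (ii) by the explicit two-step weight computation that you summarize as transitivity of affine interpolation across refinements (the underlying geometric fact being that the $K_2$-gap of $x$ is nested inside its $K_3$-gap), and it deduces (iii) formally from (i) and (ii), e.g.\ $P_{K_1,K_3}\circ L_{K_1,K_2}=P_{K_2,K_3}\circ P_{K_1,K_2}\circ L_{K_1,K_2}=P_{K_2,K_3}$, which is equivalent to your check on Diracs.

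One step you omit would literally fail as written: the lemma does not assume the sets $K_i$ are closed, and for a non-closed $K_2$ (say $K_2=\{0,1\}\cup(\tfrac12,1)$ and $x=\tfrac14\in K_1$) the ``unique pair $a<b$ in $K_2$ with $[a,b]\cap K_2=\{a,b\}$'' containing $x$ does not exist, so your map $\varphi$ is undefined. The paper disposes of this at the outset by invoking \cite{AACD2018}*{Proposition 4.17}, which identifies $\mathcal{F}_p(K)$ canonically with $\mathcal{F}_p(\overline{K})$, so one may assume each $K_i$ closed; then, since $\{0,1\}\subseteq K_2$, every $x\in K_1\setminus K_2$ does lie in a genuine gap with both endpoints in $K_2$. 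This is a one-line repair, so your argument is correct in substance and follows the paper's route. Two minor remarks: the normalization $\Vert\delta_{K_2}(b)-\delta_{K_2}(a)\Vert=b-a$ needs no appeal to Theorem~\ref{thm:BasisFpN}(d), since $\delta_{K_2}$ is an isometric embedding of $K_2$ into $\mathcal{F}_p(K_2)$; and your boundary case $b=c$ is automatically subsumed in the paper's chain $a\le x\le b\le c\le y\le d$, which allows $b=c$.
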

\begin{proof}
By \cite{AACD2018}*{Proposition 4.17} we can assume that $K_{i}$ is
closed for $i\in \{1,2,3\}$. Since for $0\le a \le b \le 1$ the mapping
\begin{equation*}
x\mapsto \max \{ a, \min \{x,b\}\}
\end{equation*}
is a $1$-Lipschitz retraction from $[0,1]$ onto $[a,b]$ we can assume
that $\{0,1\}\subseteq K_{3}$. Given $x\in [0,1]\setminus K_{2}$ there
are $a=a[x,K_{2}]$, and $b=b[x,K_{2}]$ such that $\{ a,b\}\subseteq K
_{2}$ and $x\in (a,b)\cap K_{2}=\emptyset $. Define $f=f_{K_{1},K_{2}}
\colon K_{1}\to \mathcal{F}_{p}(K_{2})$ by
\begin{equation}
\label{eq:2}
f_{K_{1},K_{2}}(x)=
\begin{cases}
\frac{b-x}{b-a}\delta _{K_{2}}(a) + \frac{x-a}{b-a} \delta _{K_{2}}(b)
&
\text{ if } x\in K_{1}\setminus K_{2},
\\
\ \delta _{K_{2}}(x)
& \text{ if } x\in K_{2}.
\end{cases}
\end{equation}
Let $x$, $y\in K_{1}$ with $x<y$. If $x$, $y\in [a,b]$ with $a$,
$b\in K_{2}$ and $(a,b)\cap K_{2}=\emptyset $ we have
\begin{equation*}
\Vert f(x)-f(y)\Vert =\left \Vert \frac{x-y}{b-a} (\delta _{K_{2}}(b)-
\delta _{K_{2}}(a) ) \right \Vert =|x-y|.
\end{equation*}
In general, there are $a$, $b$, $c$, $d\in K_{2}$ such that
$a\le x \le b \le c \le y \le d$. Then
\begin{align*}
\Vert f(x)-f(y)\Vert ^{p}
&=\Vert f(x)-f(b)\Vert ^{p} + \Vert f(b)-f(c)
\Vert ^{p} + \Vert f(c)-f(y)\Vert ^{p}
\\
&\le |x-b|^{p}+|b-c|^{p}+|c-y|^{p}
\\
&\le 3^{1-p}|y-x|^{p}.
\end{align*}
Hence, by \cite{AACD2018}*{Theorem 4.5}, there is $P_{K_{1},K_{2}}
\colon \mathcal{F}_{p}(K_{1})\to \mathcal{F}_{p}(K_{2})$ such that
$\Vert P_{K_{1},K_{2}}\Vert \le 3^{1/p-1}$ and
\begin{equation}
\label{eq:1}
P_{K_{1},K_{2}}\circ \delta _{K_{1}} =f_{K_{1},K_{2}}.
\end{equation}
If $x\in K_{2}$ we have
\begin{equation*}
P_{K_{1},K_{2}} (L_{K_{1},K_{2}} (\delta _{K_{2}}(x)))=f_{K_{1},K_{2}}(x)=
\delta _{K_{2}}(x),
\end{equation*}
and so (i) holds. In order to prove (ii), we pick $x\in K_{1}$. In the
case when $x\in K_{2}$ it is clear from \eqref{eq:2} and \eqref{eq:1}
that
\begin{equation*}
P_{K_{2},K_{3}} (P_{K_{1},K_{2}} (\delta _{K_{1}}(x)))=P_{K_{2},K_{3}}(
\delta _{K_{2}}(x))=f_{K_{1},K_{3}}(x).
\end{equation*}
Assume that $x\notin K_{2}$ and set $a_{2}=a[x,K_{2}]$, $b_{2}=b[x,K
_{2}]$, $a_{3}=a[x,K_{3}]$ and $b_{3}=b[x,K_{3}]$. We have
\begin{align*}
P_{K_{2},K_{3}} (P_{K_{1},K_{2}} (\delta _{K_{1}}(x)))
&= \frac{b_{2}-x}{b
_{2}-a_{2}} \left ( \frac{b_{3}-a_{2}}{b_{3}-a_{3}} \delta _{K_{3}}(a
_{3}) + \frac{a_{2}-a_{3}}{b_{3}-a_{3}} \delta _{K_{3}}(b_{3}) \right )
\\
&+\frac{x-a_{2}}{b_{2}-a_{2}} \left (
\frac{b_{3}-b_{2}}{b_{3}-a_{3}} \delta _{K_{3}}(a_{3}) + \frac{b_{2}-a
_{3}}{b_{3}-a_{3}} \delta _{K_{3}}(b_{3}) \right )
\\
&= \frac{b_{3}-x}{b_{3}-a_{3}} \delta _{K_{3}}(a_{3}) + \frac{x-a_{3}}{b
_{3}-a_{3}} \delta _{K_{3}}(b_{3})
\\
&=f_{K_{1},K_{3}}(x).
\end{align*}
Thus, (ii) holds.

(iii) is a straightforward consequence of (i) and (ii).
\end{proof}

The following result is a version (and a generalization) of the fact
that conditional expectations define bounded operators in $L_{1}$.
\begin{Theorem}
\label{thm:conditionalexpectation}
For any $K\subseteq [0,1]$, the space $\mathcal{F}_{p}(K)$ is
complemented in $\mathcal{F}_{p}([0,1])$. To be precise, there is a
linear map $P\colon \mathcal{F}_{p}([0,1])\to \mathcal{F}_{p}(K)$ such
that $\Vert P\Vert \le 3^{1/p-1}$ and, if $\jmath \colon K \to [0,1]$
is the inclusion map, $P\circ L_{\jmath }=
\operatorname{\mathrm{{Id}}}_{\mathcal{F}_{p}(K)}$.
\end{Theorem}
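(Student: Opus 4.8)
The plan is to deduce the theorem directly from Lemma~\ref{lem:keyinterval}, whose operators $P_{K_{1},K_{2}}$ already carry all the analytic content (in particular the constant $3^{1/p-1}$, which comes from the $p$-convexity estimate $3^{1-p}$ in its proof). The only thing left to do is to massage an arbitrary $K$ into the shape demanded by the lemma, namely a set sandwiched as $\{0,1\}\subseteq K_{2}\subseteq K_{1}\subseteq[0,1]$. First I would reduce to a convenient normal form: by \cite{AACD2018}*{Proposition 4.17} the canonical map $\mathcal{F}_{p}(K)\to\mathcal{F}_{p}(\overline{K})$ is an onto isometry, so we may assume $K$ is closed, hence compact. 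Since $L_{\jmath}$ is the canonical map associated with the base-point-preserving inclusion $\jmath$, we may also assume that $0\in K$ and that $0$ is the base point.

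If in addition $1\in K$, then $\{0,1\}\subseteq K\subseteq[0,1]$ and there is nothing more to prove: the choice $(K_{1},K_{2})=([0,1],K)$ in Lemma~\ref{lem:keyinterval} gives $P:=P_{[0,1],K}$ with $\Vert P\Vert\le 3^{1/p-1}$ and, by part (i), $P\circ L_{\jmath}=P_{[0,1],K}\circ L_{[0,1],K}=\mathrm{Id}_{\mathcal{F}_{p}(K)}$. The remaining case is $1\notin K$. Here I would enlarge $K$ to $\tilde K:=K\cup\{1\}$, which now satisfies $\{0,1\}\subseteq\tilde K\subseteq[0,1]$, and apply the lemma to the pair $([0,1],\tilde K)$ to obtain $P_{[0,1],\tilde K}\colon\mathcal{F}_{p}([0,1])\to\mathcal{F}_{p}(\tilde K)$ of norm at most $3^{1/p-1}$. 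To descend from $\tilde K$ back to $K$, let $b=\max K$, which exists and satisfies $b<1$ since $K$ is compact and $1\notin K$, and consider the map $r\colon\tilde K\to K$ fixing $K$ pointwise and sending $1\mapsto b$. For $x\in K$ one has $|r(x)-r(1)|=b-x\le 1-x=|x-1|$, so $r$ is a $1$-Lipschitz retraction and the induced map $L_{r}\colon\mathcal{F}_{p}(\tilde K)\to\mathcal{F}_{p}(K)$ has $\Vert L_{r}\Vert\le\mathrm{Lip}(r)=1$. Setting $P:=L_{r}\circ P_{[0,1],\tilde K}$ yields $\Vert P\Vert\le\Vert L_{r}\Vert\,\Vert P_{[0,1],\tilde K}\Vert\le 3^{1/p-1}$.

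It then remains to verify $P\circ L_{\jmath}=\mathrm{Id}_{\mathcal{F}_{p}(K)}$. Writing $L_{\tilde K,K}$ for the canonical map $\mathcal{F}_{p}(K)\to\mathcal{F}_{p}(\tilde K)$ induced by $K\hookrightarrow\tilde K$, the inclusion $\jmath$ factors as $K\hookrightarrow\tilde K\hookrightarrow[0,1]$, so that $L_{\jmath}=L_{[0,1],\tilde K}\circ L_{\tilde K,K}$. Using part (i) of Lemma~\ref{lem:keyinterval} for the pair $([0,1],\tilde K)$, we obtain
\[
P\circ L_{\jmath}
= L_{r}\circ P_{[0,1],\tilde K}\circ L_{[0,1],\tilde K}\circ L_{\tilde K,K}
= L_{r}\circ L_{\tilde K,K}
= \mathrm{Id}_{\mathcal{F}_{p}(K)},
\]
where the last equality holds because $L_{r}\circ L_{\tilde K,K}$ is the canonical map induced by $r|_{K}=\mathrm{id}_{K}$.

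The one point that could conceivably inflate the constant — repairing the missing endpoint $1$ — is handled for free, since the repair map $r$ is a genuine $1$-Lipschitz retraction and therefore does not degrade the bound $3^{1/p-1}$. Consequently I expect no substantive obstacle here: all the hard estimates already reside in Lemma~\ref{lem:keyinterval}, and the theorem is essentially a packaging statement obtained by reducing an arbitrary $K$ to the hypotheses of that lemma and absorbing at most one endpoint via a retraction.
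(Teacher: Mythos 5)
Your proof is correct and is in substance the paper's own: the paper likewise reduces to the case $\{0,1\}\subseteq K$ by means of the $1$-Lipschitz clamp retraction $x\mapsto\max\{a,\min\{x,b\}\}$ and then invokes Lemma~\ref{lem:keyinterval}; you merely apply the clamp on the other side of the lemma (enlarge $K$, project, retract back) instead of retracting $[0,1]$ onto the interval $[\min K,\max K]$ first. One slip worth flagging: the reduction ``we may assume $0\in K$'' is not justified by base-point preservation alone --- the common base point of $K$ and $[0,1]$ must lie in $K$, but nothing forces it to be the point $0$, so in general neither $0$ nor $1$ belongs to $K$. This costs nothing, because your own repair mechanism handles both endpoints at once: take $\tilde K=K\cup\{0,1\}$ and let $r\colon\tilde K\to K$ be the restriction of the clamp $x\mapsto\max\{\min K,\min\{x,\max K\}\}$, so that $0\mapsto\min K$ and $1\mapsto\max K$ (here $\min K$ and $\max K$ exist since you have already reduced to $K$ closed via \cite{AACD2018}*{Proposition 4.17}). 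This $r$ is $1$-Lipschitz by exactly the computation you gave for the endpoint $1$, and the rest of your argument --- the factorization $L_{\jmath}=L_{[0,1],\tilde K}\circ L_{\tilde K,K}$, the identity $L_{r}\circ L_{\tilde K,K}=\operatorname{\mathrm{Id}}_{\mathcal{F}_{p}(K)}$ from functoriality, and the bound $\Vert P\Vert\le 3^{1/p-1}$ --- goes through verbatim. Note also that Lemma~\ref{lem:keyinterval} is insensitive to where in $K_{2}$ the common base point sits: its hypothesis $\{0,1\}\subseteq K_{2}$ concerns the presence of the endpoints as points of the set, not the choice of base point, so no further adjustment is needed.
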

\begin{proof}
Since, for $0\le a \le b \le 1$, the mapping
\begin{equation*}
x\mapsto \max \{ a, \min \{x,b\}\}
\end{equation*}
is a $1$-Lipschitz retraction from $[0,1]$ onto $[a,b]$ we can assume
that $\{0,1\}\subseteq K$ (see, e.g., \cite{AACD2018}*{Lemma 4.19}).
Now the result is immediate from Lemma~\ref{lem:keyinterval}.
\end{proof}

\begin{Theorem}
\label{thm:crudely}
Let $0<p\le 1$. The space $\mathcal{F}_{p}([0,1])$ is crudely finitely  representable in
$\mathcal{F}_{p}(\mathbb{N}_{*})$, and the space $\mathcal{F}_{p}(
\mathbb{N}_{*})$ is crudely finitely representable in $\mathcal{F}
_{p}([0,1])$. That is, the finite dimensional subspace structures of the $p$-Banach spaces
$\mathcal{F}_{p}([0,1])$ and $\mathcal{F}_{p}(\mathbb{N}_{*})$ coincide.
\end{Theorem}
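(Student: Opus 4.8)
The plan is to prove crude finite representability in both directions by reducing every finite-dimensional subspace to a free $p$-space over a \emph{finite subset of a line}, where Lemma~\ref{lem:keyinterval} supplies uniform control. The one structural fact special to $p<1$ that makes the argument run is that, for subsets of $[0,1]$, the canonical map is an isomorphic embedding with distortion at most $3^{1/p-1}$ \emph{independently of the subset} (Theorem~\ref{thm:conditionalexpectation}); for general $p$-metric subsets this fails (cf.\ \cite{AACD2018}*{Theorem 6.1}), and this is exactly the obstacle that confines the whole argument to the one-dimensional setting. Note that $3^{1/p-1}=1$ when $p=1$, so everything specializes to the classical mutual finite representability of $\ell_1$ and $L_1$.

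First I would treat the representability of $\mathcal{F}_p(\mathbb{N}_*)$ in $\mathcal{F}_p([0,1])$. Let $E\subseteq\mathcal{F}_p(\mathbb{N}_*)$ be finite-dimensional. Since by Theorem~\ref{thm:BasisFpN} the basis $\mathcal{B}$ is bi-monotone with partial-sum projections $(P_m)$, the restrictions $P_m|_E$ converge to the identity uniformly on the unit sphere of $E$, so for large $m$ they are isomorphisms onto their images with distortion tending to $1$; thus $E$ is $(1+\epsilon)$-isomorphic to a subspace of $V_m=\operatorname{span}(\mathbf{x}_1,\dots,\mathbf{x}_m)$. By Theorem~\ref{thm:BasisFpN}(b), $V_m\simeq\mathcal{F}_p(\mathbb{Z}[0,m])$ isometrically, and rescaling the metric by $1/m$ identifies this isometrically with $\mathcal{F}_p(\{0,1/m,\dots,1\})$, a free $p$-space over a subset of $[0,1]$. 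Theorem~\ref{thm:conditionalexpectation} then embeds it into $\mathcal{F}_p([0,1])$ with distortion at most $3^{1/p-1}$. Composing, every finite-dimensional subspace of $\mathcal{F}_p(\mathbb{N}_*)$ is $\lambda$-isomorphic to a subspace of $\mathcal{F}_p([0,1])$ with $\lambda=3^{1/p-1}$.

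For the converse, let $E\subseteq\mathcal{F}_p([0,1])$ be finite-dimensional. As finitely supported elements are dense, a standard small-perturbation argument gives, for each $\epsilon>0$, a subspace $E'$ spanned by finitely many Dirac masses $\delta_{[0,1]}(x_i)$ with Banach--Mazur distance $d_{\mathrm{BM}}(E,E')\le 1+\epsilon$; set $K=\{x_i\}\cup\{0,1\}$. I would then approximate $K$ by lattice points: choosing $N$ large relative to the minimal gap of $K$ and letting $s_i$ be the nearest point of $\tfrac1N\mathbb{Z}$ to $x_i$ (with endpoints fixed at $0,1$), the bijection $x_i\mapsto s_i$ is bi-Lipschitz with distortion $1+\epsilon$, so by the universal property $\mathcal{F}_p(K)\simeq_{1+\epsilon}\mathcal{F}_p(\{s_i\})$. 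Since $\{s_i\}\subseteq\tfrac1N\mathbb{Z}[0,N]$, Lemma~\ref{lem:keyinterval} embeds $\mathcal{F}_p(\{s_i\})$ into $\mathcal{F}_p(\tfrac1N\mathbb{Z}[0,N])$ with distortion at most $3^{1/p-1}$; rescaling yields $\mathcal{F}_p(\mathbb{Z}[0,N])$, which by Theorem~\ref{thm:BasisFpN}(b) sits isometrically in $\mathcal{F}_p(\mathbb{N}_*)$. Finally, tracking that the norm $E'$ inherits from $\mathcal{F}_p([0,1])$ agrees, up to one further factor $3^{1/p-1}$ (again Theorem~\ref{thm:conditionalexpectation}), with its norm as a subspace of $\mathcal{F}_p(K)$, one concludes that $E$ is $\lambda$-isomorphic to a subspace of $\mathcal{F}_p(\mathbb{N}_*)$ for a constant $\lambda$ of order $(3^{1/p-1})^2$.

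Since both directions produce a uniform finite distortion, the finite-dimensional subspace structures coincide. The only genuine difficulty is the one flagged above—controlling the passage between the free $p$-space of a set and of its subsets when $p<1$—which here is resolved entirely by the linear structure of the ambient line through Lemma~\ref{lem:keyinterval} and Theorem~\ref{thm:conditionalexpectation}; the remaining ingredients (Schauder-basis truncation, density of finitely supported elements, bi-Lipschitz approximation of a finite set by rescaled lattice points, and scaling invariance of $\mathcal{F}_p$) are routine.
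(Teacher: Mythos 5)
Your proposal is correct and follows essentially the same route as the paper: both directions rest on Theorem~\ref{thm:BasisFpN}(b), the uniform $3^{1/p-1}$-bounded projections of Lemma~\ref{lem:keyinterval}/Theorem~\ref{thm:conditionalexpectation}, dilation invariance of $\mathcal{F}_p$, and a density/small-perturbation step. The only (cosmetic, quantitative) difference is that the paper perturbs directly into the spans $X_n$ of Diracs over the dyadic grids $K_n=\{i2^{-n}\colon 0\le i\le 2^n\}$, which a dilation carries exactly onto $\mathbb{Z}[0,2^n]$, thereby avoiding your lattice-rounding detour and the second application of Lemma~\ref{lem:keyinterval}, so it obtains the constant $3^{1/p-1}(1+\epsilon)$ where you get one of order $(3^{1/p-1})^2$ --- either uniform bound suffices for crude finite representability.
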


\begin{proof}
Let $K_{n}=\{i 2^{-n} \colon 0\le i \le 2^{n}\}$, $X_{n}:=
\operatorname{\mathrm{{span}}}\left (\{ \delta _{[0,1]}(x) \colon x
\in K_{n}\right \}  )$, $\mathcal M_{n}=\mathbb{Z}[0,2^{n}]$ and $Y_{n}:=
\operatorname{\mathrm{{span}}}\left (\{ \delta _{\mathbb{N}_{*}}(x)
\colon x\in M_{n}\}\right )$. By
Theorem~\ref{thm:conditionalexpectation}, $X_{n}$ is uniformly
isomorphic to $\mathcal{F}_{p}(K_{n})$, and by
Theorem~\ref{thm:BasisFpN}, $Y_{n}$ is uniformly isomorphic to
$\mathcal{F}_{p}(\mathcal M_{n})$. Moreover $\cup _{n=1}^{\infty }X_{n}$ is dense
in $\mathcal{F}_{p}([0,1])$ and $\cup _{n=1}^{\infty }Y_{n}$ is dense in
$\mathcal{F}_{p}(\mathbb{N}_{*})$. Since $K_{n}$ is Lipschitz-isomorphic
to $\mathcal M_{n}$ with distorsion constant one, we are done.
\end{proof}

Next, we generalize the fact that the Haar system is a Schauder basis
of $L_{1}([0,1])$. Given an interval $J=[a,b]\subseteq [0,1]$ we define
the Haar molecule $h_{J}$ of the interval $J$ by
\begin{align*}
h_{J}
&=-\left (\delta _{[0,1]}\left (\frac{a+b}{2}\right )-\delta _{[0,1]}(a)\right )+
\left (\delta _{[0,1]}(b)-\delta _{[0,1]}\left (\frac{a+b}{2}\right ) \right )
\\
&=\delta _{[0,1]}(a)+\delta _{[0,1]}(b)-2\delta _{[0,1]}\left (\frac{a+b}{2}\right ).
\end{align*}
Denote also $h_{0}=\delta _{[0,1]}(1)-\delta _{[0,1]}(0)$. The Haar system
of $\mathcal{F}_{p}(\mathcal{M})$ is the family $\mathcal{H}=(h_{J})_{
\{0\}\cup \mathcal{D}}$, where $\mathcal{D}$ is the set of dyadic
intervals contained in $[0,1]$.

\begin{Theorem}
\label{thm:basisFpR}The Haar system, arranged if such a way that $h_{0}$ is its first term
and Haar molecules of bigger intervals appear before, is a Schauder
basis of $\mathcal{F}_{p}([0,1])$ with basis constant not bigger than
$3^{1/p-1}$.
\end{Theorem}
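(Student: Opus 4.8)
The plan is to exhibit the partial-sum projections of the Haar system as the complementation projections furnished by Lemma~\ref{lem:keyinterval}, and then to invoke the standard basic-sequence criterion. For $n\in\mathbb{N}$ write $K_n=\{i2^{-n}\colon 0\le i\le 2^n\}$ and, for any finite set $K$ with $\{0,1\}\subseteq K\subseteq[0,1]$, set $Q_K=L_{[0,1],K}\circ P_{[0,1],K}\colon\mathcal{F}_p([0,1])\to\mathcal{F}_p([0,1])$. By Lemma~\ref{lem:keyinterval}(i), $Q_K$ is a projection with $\|Q_K\|\le 3^{1/p-1}$ whose range is $V_K:=L_{[0,1],K}(\mathcal{F}_p(K))=\operatorname{span}\{\delta_{[0,1]}(x)\colon x\in K\}$, and $Q_K$ fixes $\delta_{[0,1]}(x)$ for $x\in K$. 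The ``Moreover'' clause of Lemma~\ref{lem:keyinterval} tells us exactly how $Q_K$ acts off $K$: if $a<x<b$ with $a,b$ consecutive points of $K$, then $Q_K(\delta_{[0,1]}(x))=\tfrac{b-x}{b-a}\delta_{[0,1]}(a)+\tfrac{x-a}{b-a}\delta_{[0,1]}(b)$; that is, $Q_K$ is the ``piecewise-affine interpolation'' projection. Since $\bigcup_n K_n$ is dense in $[0,1]$, the union $\bigcup_n V_{K_n}$ is dense in $\mathcal{F}_p([0,1])$, and hence so is the linear span $E$ of the Haar system.

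The computational heart of the argument is the behaviour of $Q_K$ on a Haar molecule $h_J$ of a dyadic interval $J=[a,b]$ with midpoint $m=(a+b)/2$: if $a,m,b\in K$ then $h_J\in V_K$ and so $Q_K(h_J)=h_J$; whereas if $a$, $m$, $b$ all lie in a single closed gap $[a',b']$ between consecutive points of $K$ (in particular if $(a,b)\cap K=\emptyset$), then applying the interpolation formula to $\delta_{[0,1]}(a)+\delta_{[0,1]}(b)-2\delta_{[0,1]}(m)$ makes the coefficients of $\delta_{[0,1]}(a')$ and $\delta_{[0,1]}(b')$ collapse to $\tfrac{2m-a-b}{b'-a'}=0$ and $\tfrac{a+b-2m}{b'-a'}=0$, so that $Q_K(h_J)=0$.

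Next I would match these two cases to the prescribed ordering. Any initial segment $I_N$ of the Haar system consists of $h_0$, all molecules of generations $0,\dots,k-1$, and a subset $S$ of the generation-$k$ molecules; to it I associate $K'_N:=K_k\cup\{m_J\colon J\in S\}\subseteq K_{k+1}$, where $m_J$ is the midpoint of $J$. One checks case by case that $Q_{K'_N}$ is precisely the $N$-th partial-sum map of the Haar system: the molecules in $I_N$ (namely $h_0$, all of generations $<k$, and the $J\in S$) have all three relevant points in $K'_N$ and are fixed; generation-$k$ molecules $J\notin S$ have $a,b\in K'_N$ and $m_J\notin K'_N$ with $(a,b)\cap K'_N=\emptyset$, so are annihilated; and every molecule of generation $g\ge k+1$ has its open base interval free of multiples of $2^{-(k+1)}$, hence $(a,b)\cap K'_N=\emptyset$ and it too is annihilated. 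Consequently, for every finitely supported $f=\sum_n c_n h_{J_n}\in E$ the $N$-th partial sum equals $Q_{K'_N}(f)$, so the partial-sum operators of the Haar system are uniformly bounded by $3^{1/p-1}$ on $E$.

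Finally, I would conclude with the completion argument: the operators $\Pi_N=Q_{K'_N}|_E$ extend to $\mathcal{F}_p([0,1])$ with $\sup_N\|\Pi_N\|\le 3^{1/p-1}$; they converge to the identity on the dense set $E$, hence everywhere; and since each term of $\mathcal{H}$ is nonzero and $\Pi_N-\Pi_{N-1}$ has rank one with range the span of the $N$-th molecule, the Haar system is a Schauder basis of $\mathcal{F}_p([0,1])$ with basis constant $\le 3^{1/p-1}$. The main obstacle is the case analysis in the third step, and in particular the dyadic-arithmetic observation that a dyadic interval of generation $\ge k+1$ meets $K_{k+1}$ only at its endpoints; this is what guarantees that $Q_{K'_N}$ kills all the ``later'' (finer) Haar molecules and therefore genuinely coincides with the partial-sum projection rather than merely with some bounded projection onto the correct subspace.
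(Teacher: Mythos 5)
Your proposal is correct and follows essentially the same route as the paper: both use the projections $L_{[0,1],K}\circ P_{[0,1],K}$ from Lemma~\ref{lem:keyinterval} associated to the finite sets obtained by adjoining midpoints of the processed molecules to the dyadic grid, together with the interpolation-collapse computation showing these projections fix earlier molecules and annihilate later ones. The only (cosmetic) difference is that the paper adds one midpoint per basis vector and deduces that all later molecules are killed from the commutation relations $P_n\circ P_m=P_{\min\{n,m\}}$ of Lemma~\ref{lem:keyinterval}(ii)--(iii), computing only $P_n(\mathbf{x}_{n+1})=0$, whereas you verify the annihilation of every finer molecule directly by the dyadic-arithmetic gap argument.
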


\begin{proof}
Let $(\mathbf{x}_{n})_{n=0}^{\infty }$ be such an arrangement of the
Haar system of $\mathcal{F}_{p}(\mathcal{M})$. Let $(K_{n})_{n=0}^{
\infty }$ be the sequence of subsets of $[0,1]$ constructed recursively
as follows: Put $K_{0}=\{0,1\}$, and if $\mathbf{x}_{n}=h_{J}$ and $c$ is
the middle point of $J$ then $K_{n}=K_{n-1}\cup \{ c \}$. By induction we
see that
\begin{equation*}
\mathbf{x}_{k}\in X_{n}:=\operatorname{\mathrm{{span}}}(\{\delta (x)
\colon x\in K_{n}\}), \quad 0\le k \le n.
\end{equation*}
Since the dyadic points are dense in $[0,1]$, $\cup _{n=0}^{\infty }X
_{n}$ is dense in $\mathcal{F}_{p}([0,1])$ by
\cite{AACD2018}*{Proposition~4.17}. Using the notation of
Lemma~\ref{lem:keyinterval}, we set
\begin{equation*}
P_{n}=L_{[0,1],K_{n}} \circ P_{[0,1],K_{n}}, \quad n\ge 0.
\end{equation*}
It is clear that $P_{n}(\mathcal{F}_{p}([0,1]))=X_{n}$. By
Lemma~\ref{lem:keyinterval} (i), $\Vert P_{n}\Vert \le 3^{1/p-1}$ and
by Lemma~\ref{lem:keyinterval} (ii) and (iii), $P_{n}\circ P_{m}=P
_{\min \{n,m\}}$. Moreover, if $\mathbf{x}_{n+1}=h_{J}$ and
$J=[a,b]$,
\begin{align*}
P_{n}(\mathbf{x}_{n+1})
&=P_{n}\left (\delta (a)+\delta (b)-2\delta
\left ( \frac{a+b}{2}\right )\right )
\\
&=\delta (a)+\delta (b)-2\left ( \frac{1}{2} \delta (a)+\frac{1}{2}
\delta (b)\right )=0.
\end{align*}
Thus $(\mathbf{x}_{n})_{n=0}^{\infty }$ is a Schauder basis for
$\mathcal{F}_{p}([0,1])$ with partial-sum projections $(P_{n})_{n=0}
^{\infty }$.
\end{proof}

\section{Questions and remarks}
\noindent
In this last section we gather a few questions that arise naturally from
our work, and which suggest possible roads to take for further research.

In Theorem~\ref{thm:sobolev} we proved that for any bounded open set
$U\subset \mathbb{R}^{d}$, the Sobolev space $W^{-1,1}(U)$ is isometric
to the Lipschitz free space over the metric quotient $\mathbb{R}^{d}/U
^{c}$. Since it is known that $\mathcal{F}(\mathbb{R}^{d})\simeq
\mathcal{F}(\mathcal{M})$ whenever $\mathcal{M}\subset \mathbb{R}^{d}$
has nonempty interior (see \cite{K15}*{Corollary 3.5}), we are tempted
to ask the following question which could be of some interest for
specialists in Sobolev spaces.

\begin{Question}
Let $d\in \mathbb{N}$, $B$ be an open unit ball in $\mathbb{R}^{d}$, and
$U\subseteq \mathbb{R}^{d}$ be a bounded open set. Are the Lipschitz
free spaces over metric quotients $\mathbb{R}^{d}/U^{c}$ and
$\mathbb{R}^{d}/B^{c}$ isomorphic?
\end{Question}

We continue with two sample questions one could consider by taking into
account what is known about copies of $\ell _{1}$ (respectively,
$\ell _{p}$) in Lipschitz free spaces (respectively, $p$-spaces). Recall
that $\ell _{1}$ is quite often isometric to a $1$-complemented subspace
of a Lipschitz free space (e.g., if the corresponding metric space has
an accumulation point) and that $\ell _{p}$ always embeds isomorphically
into a Lipschitz free $p$-space. See
Sections~\ref{Section:Comp} and \ref{Sect3} for more details, results,
and references.

\begin{Question}
\label{qu:1complemented}
For which nonseparable metric spaces $\mathcal{M}$ is $\ell _{1}(
\operatorname{dens} \mathcal{M})$ isometric to a
$1$-complemented subspace of $\mathcal{F}(\mathcal{M})$?
\end{Question}

\begin{Question}
\label{qu:1embeded}
Is $\ell _{p}$ isometric to a subspace of $\mathcal{F}_{p}(\mathcal{M})$
whenever the metric space $\mathcal{M}$ has an accumulation point?
\end{Question}

It is not very difficult to prove that if $\mathcal{N}$ is a net in a
Banach space $X$ then $\mathcal{F}(X)$ is finitely representable in
$\mathcal{F}(\mathcal{N})$ and vice versa. Now, when $p<1$ it is not
even known (see \cite{AACD2018}*{Question 6.2}) whether $
\mathcal{F}_{p}(\mathcal{N})$ isomorphically embeds into $\mathcal{F}
_{p}(X)$, so the techniques from the case $p=1$ break down.
Theorem~\ref{thm:crudely} suggests that an analogous result for $p<1$
could be true as well.

\begin{Question}
Suppose $\mathcal{N}$ is a net in an infinite-dimensional Banach space
$X$, and let $0<p<1$. Is $\mathcal{F}_{p}(\mathcal{N})$ crudely
finitely  representable in $\mathcal{F}_{p}(X)$? Is $\mathcal{F}_{p}(X)$ crudely
finitely representable in $\mathcal{F}_{p}(\mathcal{N})$?
\end{Question}

Note that, in general, $\mathcal{F}(
\mathcal{M})$ need not be  crudely finitely representable in $
\mathcal{F}(\mathcal{N})$  whenever $\mathcal{N}$ is a net in an
unbounded separable metric space $\mathcal{M}$.  Indeed, pick a separable Banach space $X$
which is not crudely finitely  representable in $\ell _{1}$ and
$x\in X\setminus \{0\}$. Consider $\mathcal{N}=\{nx\colon n\in
\mathbb{N}\}$ and $\mathcal{M}=B_{X}\cup \mathcal{N}$. Then
$\mathcal{N}$ is a net in $\mathcal{M}$ and $\mathcal{F}(\mathcal{N})
\simeq \ell _{1}$, but $\mathcal{F}(\mathcal{M})$ is not crudely
finitely representable in $\ell _{1}$ because $X$ is isomorphic to a complemented
subspace of $\mathcal{F}(X)\simeq \mathcal{F}(B_{X})$ (see
\cite{GodefroyKalton2003}*{Theorems 2.12 and 3.1} and
\cite{K15}*{Corollary 3.3}).

Finally, recall that the spaces $\mathcal{F}_{p}(\mathbb{Z}^{d})$, $d\in \mathbb{N}$, and  $\mathcal{F}_{p}([0,1])$ have
a Schauder basis (see Theorem~\ref{rem:freeZetD} and
Theorem~\ref{thm:basisFpR}). Moreover, it is known that
$\mathcal{F}([0,1]^{d})$ also has a Schauder basis for each $d\in
\mathbb{N}$ (see \cites{HP14,K15}). Thus, it is natural to ask the
following.

\begin{Question}
Does $\mathcal{F}_{p}([0,1]^{d})$ have a Schauder basis for each
$d\in \mathbb{N}$ and each $p\in (0,1]$?
\end{Question}

There are also several other areas of research one could consider and
which we left untouched in this paper. We conclude with two possible
questions on the subject of approximation properties. Let us emphasize that the
answer to both questions is positive if $p=1$ (see
\cite{GodefroyKalton2003} and \cite{Pernecka-Lancien}), but it seems
that the proofs cannot be directly generalized to the case when $p<1$.
Thus, fresh ideas are needed in order to move on in this direction. These new ideas would help to better understand
the classical case (that is, $p=1$)
as well.

\begin{Question}
Does $\mathcal{F}_{p}(X)$ have a metric approximation property whenever
$X$ is a finite dimensional Banach space and $p\in (0,1]$?
\end{Question}

\begin{Question}
Let $\mathcal{M}\subset \mathbb{R}^{d}$ and $p\in (0,1]$. Is
$\mathcal{F}_{p}(\mathcal{M})$ isomorphic to a complemented subspace of
$\mathcal{F}_{p}(\mathbb{R}^{d})$?
\end{Question}

\subsection*{Acknowledgment}
The authors would like to thank Prof. Anton\'{i}n Proch\'{a}zka for the
helpful discussions maintained during the Conference ``Non Linear
Functional Analysis'' held at CIRM (Luminy, France) from March 5 to 9,
2018. They also thank Prof. Przemys\l aw Wojtaszczyk for clarifying
remarks on the existence of unconditional bases in nonlocally convex
quasi-Banach spaces.

\begin{bibsection}
\begin{biblist}

\bib{Aharoni1974}{article}{
author={Aharoni, I.},
title={Every separable metric space is Lipschitz equivalent to a subset of $c^{+}_{0}$},
journal={Israel J. Math.},
volume={19},
date={1974},
pages={284--291},
}

\bib{Albiac2008}{article}{
author={Albiac, F.},
title={Nonlinear structure of some classical quasi-Banach spaces and $\mathsf F$-spaces},
journal={J. Math. Anal. Appl.},
volume={340},
date={2008},
number={2},
pages={1312--1325},
}

\bib{AABW2019}{article}{
author={Albiac, F.},
author={Ansorena, J.~L.},
author={Bern\'a, P.},
author={Wojtaszczyk, P.},
title={Greedy approximation for biorthogonal systems in quasi-Banach spaces},
journal={arXiv:1903.11651 [math.FA]},
}

\bib{AACD2018}{article}{
author={Albiac, F.},
author={Ansorena, J.~L.},
author={C\'uth, M.},
author={Doucha, M.},
title={Lipschitz free $p$-spaces for $0<p<1$},
journal={arXiv:1811.01265 [math.FA], To appear in Israel J. Math.},
date={2019},
}

\bib{AlbiacKalton2009}{article}{
author={Albiac, F.},
author={Kalton, N.~J.},
title={Lipschitz structure of quasi-Banach spaces},
journal={Israel J. Math.},
volume={170},
date={2009},
pages={317--335},
}

\bib{AlbiacKalton2016}{book}{
author={Albiac, F.},
author={Kalton, N.~J.},
title={Topics in Banach space theory},
series={Graduate Texts in Mathematics},
volume={233},
edition={2},
note={With a foreword by Gilles Godefroy},
publisher={Springer, [Cham]},
date={2016},
}

\bib{BenLin2000}{book}{
author={Benyamini, Y.},
author={Lindenstrauss, J.},
title={Geometric nonlinear functional analysis. Vol. 1},
series={American Mathematical Society Colloquium Publications},
volume={48},
publisher={American Mathematical Society, Providence, RI},
date={2000},
pages={xii+488},
}

\bib{BuragoBook}{book}{
author={Burago, D.},
author={Burago, Y.},
author={Ivanov, S.},
title={A course in metric geometry},
series={Graduate Studies in Mathematics},
volume={33},
publisher={American Mathematical Society, Providence, RI},
date={2001},
pages={xiv+415},
isbn={0-8218-2129-6},
}

\bib{BK98}{article}{
author={Burago, D.},
author={Kleiner, B.},
title={Separated nets in {E}uclidean space and Jacobians of bi-Lipschitz maps},
date={1998},
ISSN={1016-443X},
journal={Geom. Funct. Anal.},
volume={8},
number={2},
pages={273\ndash 282},
}

\bib{CuthDoucha2016}{article}{
author={C\'uth, M.},
author={Doucha, M.},
title={Lipschitz-free spaces over ultrametric spaces},
journal={Mediterr. J. Math.},
volume={13},
date={2016},
number={4},
pages={1893--1906},
}

\bib{CDW2016}{article}{
author={C\'uth, M.},
author={Doucha, M.},
author={Wojtaszczyk, P.},
title={On the structure of Lipschitz-free spaces},
journal={Proc. Amer. Math. Soc.},
volume={144},
date={2016},
number={9},
pages={3833--3846},
}

\bib{CuthJohanis2017}{article}{
author={C\'uth, M.},
author={Johanis, M.},
title={Isometric embedding of $\ell_1$ into Lipschitz-free spaces and
$\ell_\infty$ into their duals},
journal={Proc. Amer. Math. Soc.},
volume={145},
date={2017},
number={8},
pages={3409--3421},
}

\bib{DKP2016}{article}{
author={Dalet, A.},
author={Kaufmann, P.~L.},
author={Proch\'{a}zka, A.},
title={Characterization of metric spaces whose free space is isometric to $\ell_1$},
journal={Bull. Belg. Math. Soc. Simon Stevin},
volume={23},
date={2016},
number={3},
pages={391--400},
}

\bib{DRT}{article}{
author={Dowling, P.~N.},
author={Randrianantoanina, N.},
author={Turett, B.},
title={Remarks on {J}ames's distortion theorems. {II}},
date={1999},
journal={Bull. Austral. Math. Soc.},
volume={59},
number={3},
pages={515\ndash 522},
}

\bib{DuFe}{article}{
author={Dutrieux, Y.},
author={Ferenczi, V.},
title={The Lipschitz free Banach spaces of $C(K)$-spaces},
journal={Proc. Amer. Math. Soc.},
volume={134},
date={2006},
number={4},
pages={1039--1044},
}

\bib{Godard2010}{article}{
author={Godard, A.},
title={Tree metrics and their Lipschitz-free spaces},
journal={Proc. Amer. Math. Soc.},
volume={138},
date={2010},
number={12},
pages={4311--4320},
}

\bib{GodefroyKalton2003}{article}{
author={Godefroy, G.},
author={Kalton, N.~J.},
title={Lipschitz-free Banach spaces},
journal={Studia Math.},
volume={159},
date={2003},
number={1},
pages={121--141},
}

\bib{HN17}{article}{,
author = {H\'{a}jek, P.},
author = {Novotn\'{y}, M.},
title = {Some remarks on the structure of {L}ipschitz-free spaces},
journal = {Bull. Belg. Math. Soc. Simon Stevin},
volume = {24},
date = {2017},
pages = {283--304},
}

\bib{HP14}{article}{
author={H\'{a}jek, P.},
author={Perneck\'{a}, E.},
title={On Schauder bases in Lipschitz-free spaces},
journal={J. Math. Anal. Appl.},
volume={416},
date={2014},
number={2},
pages={629--646},
}

\bib{handbookCardinalFunctions}{incollection}{
author={Hodel, R.},
title={Cardinal functions. {I}},
date={1984},
booktitle={Handbook of set-theoretic topology},
publisher={North-Holland, Amsterdam},
pages={1\ndash 61},
}

\bib{JLS1996}{article}{
author={Johnson, W. B.},
author={Lindenstrauss, J.},
author={Schechtman, G.},
title={Banach spaces determined by their uniform structures},
journal={Geom. Funct. Anal.},
volume={6},
date={1996},
number={3},
pages={430--470},
}

\bib{KPR1984}{book}{
author={Kalton, N.~J.},
author={Peck, N.~T.},
author={Roberts, J.~W.},
title={An $\mathsf{F}$-space sampler},
series={London Mathematical Society Lecture Note Series},
volume={89},
publisher={Cambridge University Press, Cambridge},
date={1984},
pages={xii+240},
}

\bib{K15}{article}{
author = {Kaufmann, P. L.},
title = {Products of {L}ipschitz-free spaces and applications},
journal = {Studia Math.},
volume = {226},
date = {2015},
pages = {213--227},
}

\bib{Pernecka-Lancien}{article}{
author={Lancien, G.},
author={Perneck\'{a}, E.},
title={Approximation properties and Schauder decompositions in
Lipschitz-free spaces},
journal={J. Funct. Anal.},
volume={264},
date={2013},
number={10},
pages={2323--2334},
}

\bib{Lindenstrauss1964}{article}{
author={Lindenstrauss, J.},
title={On nonlinear projections in Banach spaces},
journal={Michigan Math. J.},
volume={11},
date={1964},
pages={263--287},
}

\bib{M98}{article}{
author={McMullen, C.~T.},
title={Lipschitz maps and nets in {E}uclidean space},
date={1998},
ISSN={1016-443X},
journal={Geom. Funct. Anal.},
volume={8},
number={2},
pages={304\ndash 314},
}

\bib{M14}{article}{
author={Mal\'{y}, J.},
title={Non-absolutely convergent integrals with respect to
distributions},
date={2014},
journal={Ann. Mat. Pura Appl. (4)},
volume={193},
number={5},
pages={1457\ndash 1484},
}

\bib{OO19}{article}{
author={{Ostrovska}, S.},
author={{Ostrovskii}, M.},
title={{Generalized transportation cost spaces}},
date={2019Feb},
journal={arXiv e-prints},
pages={arXiv:1902.10334},
}

\bib{Weaver2018}{book}{
author={Weaver, N.},
title={Lipschitz algebras},
edition={2},
publisher={World Scientific Publishing Co. Pte. Ltd., Hackensack, NJ},
date={2018},
pages={xiv+458},
}

\bib{Weaver2019}{article}{
author={Weaver, N.},
title={On the unique predual problem for Lipschitz spaces},
journal={Math. Proc. Cambridge Philos. Soc.},
volume={165},
date={2018},
number={3},
pages={467--473},
}
\end{biblist}

\end{bibsection}

\end{document}